\author{Álvaro Martínez-Pérez and
Manuel A. Morón \footnote{The authors are partially supported by
MTM 2006-00825}}
\date{Departamento de Geometría y Topología, Universidad Complutense de Madrid. Madrid 28040, Spain\\
        e-mail: mamoron@mat.ucm.es; \ alvaro\_martinez@mat.ucm.es}
\title{Inverse sequences, rooted trees and their end spaces}
\begin{document}
\maketitle

\newtheorem{definicion}{Definition}[section]
\newtheorem{nota}[definicion]{Remark}
\newtheorem{prop}[definicion]{Proposition}
\newtheorem{lema}[definicion]{Lemma}
\newtheorem{obs}[definicion]{Remark}
\newtheorem{teorema}[definicion]{Theorem}
\newtheorem{cor}[definicion]{Corollary}
\newtheorem{ejp}[definicion]{Example}
\newtheorem{contejp}[definicion]{Counterexample}

\newtheorem{definicion2}{Definition}[subsection]
\newtheorem{nota2}[definicion2]{Remark}
\newtheorem{prop2}[definicion2]{Proposition}
\newtheorem{lema2}[definicion2]{Lemma}
\newtheorem{obs2}[definicion2]{Remark}
\newtheorem{teorema2}[definicion2]{Theorem}
\newtheorem{cor2}[definicion2]{Corollary}
\newtheorem{ejp2}[definicion2]{Example}

\begin{abstract} In this paper we  prove that if we consider the standard real metric on
simplicial rooted trees then the category Tower-Set of inverse
sequences can be described by means of the bounded coarse geometry
of the naturally associated trees. Using this we give a
geometrical characterization of Mittag-Leffler property in inverse
sequences  in terms of the metrically proper homotopy type of the
corresponding tree and its maximal geodesically complete subtree.
We also obtain some consequences in shape theory. In particular we
describe some new representations of shape morphisms related to
infinite branches in trees.
\end{abstract}

Keywords: Tree, inverse sequence, end space, coarse map,
Mittag-Leffler property, Shape Theory.

MSC: Primary: 54E35; 53C23; 55P55 Secondary: 54C05; 51K05

\section{Introduction}
It has been proved the efficiency of the use of category theory
and categorical language to study more concrete mathematical
structures. Moreover the construction of functors between
categories allows us to translate  specific facts in an specific
framework to a  different one. An example of all above  is
Algebraic Topology, created by means of Topology jointly with
different functors to algebraic categories. Taking one step up on
abstraction, new categories are created from old ones to produce
new useful framework such as pro-categories (inverse systems) with
the full subcategories of Towers (inverse sequences) or
in-categories (directed systems) with the subcategories of
directed sequences.

Many developments in mathematics use the abstract algebraic
construction of pro-category to unify concepts, results and
procedures. For example, pro-categories are used to describe shape
theory in order to extend efficiently the algebraic treatment of
CW-complexes or polyhedra to more general classes with not so good
local properties. See \cite{Dydak-Segal}, \cite{MS1} and
\cite{Cor-Por}.

However the above mentioned  categorical, or even pro-categorical,
chain of constructions can have some not so good secondary effects
such as to convert the language itself in a new matter to learn.

One of the aims of this paper is to convert the category
\textbf{Tower-Set} into a geometrical language involving trees and
Coarse Geometry, giving so a new relation for Shape Theory. In
particular we relate it to Coarse Geometry of simplicial
$\mathbb{R}$-trees. In fact we do something more going  further in
the following Serre's observation, \cite{Ser} pages 18-19:
\emph{"...We therefore have an equivalence between pointed trees
and inverse systems of sets indexed by integers $\geq 1$"}. In
this phrase Serre was referring to simplicial trees.  Our purpose
is to  describe in a geometrical way the abstract language of
pro-categories, at least for inverse sequences and maps between
them, using trees and certain continuous maps between them.

We then prove that if we consider the standard real metric on
simplicial trees then the category of Towers can be described by
means of a homotopy relation akin to  the bounded Coarse Geometry
of the corresponding tree. See \cite{Roe1}, \cite{Roe2} for
anything herein related to  \emph{Coarse Geometry}.

Based on the above equivalence it is natural to ask for describing
results in one of the categories in terms of the other. This is
the case of the important  Mittag-Leffler  property for Towers.
The Mittag-Leffler property was considered by Grothendieck,
\cite{Groth}, in the realm of Algebraic Geometry. After the
inverse systems description of shape theory by Marde\v{s}i\'c and
Segal in \cite{MS2}, it became clear soon the relevance of this
property in shape theory. In fact this is a shape property in
nature because it is equivalent in pro-Set to the notion of
movability , see \cite{MS1}, introduced by Borsuk. Of special
relevance is the case of pro-Group. As one can see in
\cite{Dydak-Segal} Chapter VI, Mittag-Leffler property appears at
first in  the study of algebraic properties associated to shape
theory. This is because, in general,  information may be lost when
passing  from pro-categories to their limits as it is the case in
shape theory. However in the presence of Mittag-Leffler property
all this information is retained.

Our geometrical characterization of Mittag-Leffler property in
inverse sequences is given in terms of the metrically proper
homotopy type of the corresponding tree and its maximal
geodesically complete subtree.

We also reinterpret and reprove, from our context, some of the
basic properties of inverse sequences, some of them for inverse
sequences of groups. In particular the level morphisms convert to
simplicial maps between trees and the description of any morphisms
by a level one  is nothing  more than an approximation result by
simplicial maps. We do this following Marde\v{s}i\'c and Segal
text \cite{MS1}.

In \cite{M-M} the authors constructed an isomorphism of categories
involving real trees and ultrametric spaces. As described there,
it was mainly related to a paper due to Hughes \cite{Hug} but also
to \cite{M-P}. Anyway in \cite{M-M} we didn't treat anything
related to shape theory as did in \cite{M-P}.

In this paper, as a consequence of our construction, we are going
to get also some applications in shape theory. In fact we recover
some of the results obtained in \cite{M-P}, related to the
construction of ultrametrics (the main properties of this type of
metrics are demonstrated and beautifully exposed in \cite{Ro}) in
the sets of shape morphisms, by passing to the end, to infinity,
in our construction.

So, as a summary, we go further on Serre's observation converting
morphisms between inverse sequences into non-expansive metrically
proper homotopy classes of non-expansive maps between trees. Thus,
we represent the categorical framework of inverse sequences inside
the core of the bounded Coarse Geometry of trees. As a consequence
we obtain some basic constructions from \cite{MS1} and \cite{M-P}
related to Shape Theory.

Although our main source of information on $\mathbb{R}$--trees is
Hughes's paper \cite{Hug}, it must be also recommended the
classical book \cite{Ser} of Serre and the survey \cite{Be} of
Bestvina to go further. Let us say that in \cite{Morgan}, J.
Morgan treats a generalization of $\mathbb{R}$--trees called
$\Lambda$--trees. Moreover, Noncommutative Geometry is used, by
Hughes in \cite{H} , to study the local geometry of ultrametric
spaces and the geometry of trees at infinity

A notational convention is in order. We use
\textbf{Tower-$\mathcal{C}$} to denote the subcategory of
\textbf{pro-$\mathcal{C}$} whose objects are inverse sequences.

\section{Preliminaries.}

In \cite{M-M}, we proved an equivalence of categories between $\mathbb{R}$--trees
and ultrametric spaces which generalizes classical results of Freudenthal
ends for locally finite simplicial trees, see \cite{BQ}. Some results and
most of the language of that paper will be used here. We include in this
section the basic definitions from \cite{Hug} and \cite{M-M} and we
summarize without proofs some results which are relevant to this paper.

\begin{definicion} A \emph{real tree}, or \emph{$\mathbb{R}$--tree} is a metric space $(T,d)$
that is uniquely arcwise connected and $\forall x, y \in T$, the
unique arc from $x$ to $y$, denoted $[x,y]$, is isometric to the
subinterval $[0,d(x,y)]$ of $\mathbb{R}$.
\end{definicion}

\begin{definicion} A \emph{rooted $\mathbb{R}$--tree}, $(T,v)$ is an
$\mathbb{R}$--tree $(T,d)$ and a point $v\in T$ called \emph{the
root}.
\end{definicion}

\begin{definicion}\label{extensiongeod} A rooted $\mathbb{R}$--tree is
\emph{geodesically complete} if every isometric embedding
$f:[0,t]\rightarrow T,\ t>0$, with $f(0)=v$, extends to an
isometric embedding $\tilde{f}:[0,\infty) \rightarrow T$. In that
case we say that $[v,f(t)]$ can be extended to a \emph{geodesic
ray}.
\end{definicion}

\begin{definicion} If $c$ is any point of the rooted $\mathbb{R}$--tree
$(T,v)$, the \emph{subtree of $(T,v)$ determined by c} is:
\[T_c=\{x\in T | \ c\in [v,x]\}.\]
\end{definicion}

\begin{definicion}
A map $f$ between two metric spaces $X, \ X'$ is \emph{metrically
proper} if for any bounded set $A$ in $X'$,  $f^{-1}(A)$ is bounded
in $X$.
\end{definicion}

\begin{definicion} If $(X,d)$ is a metric space and \ $d(x,y)\leq \max \{d(x,z),d(z,y)\}$
for all $x,y,z\in X$, then $d$ is an \emph{ultrametric} and
$(X,d)$ is an \emph{ultrametric space}.
\end{definicion}

There is a classical relation between trees and ultrametric
spaces. The functors between the objects are defined as follows in \cite{Hug}.

\begin{definicion}\label{end} The \emph{end space} of a rooted
$\mathbb{R}$--tree $(T,v)$ is given by: \vspace{0.5cm}

\mbox{$end(T,v)=\{f:[0,\infty) \rightarrow T \ |\ f(0)=v$ and $f$
is an isometric embedding $\}.$} \vspace{0.5cm}

For $f,g\in end(T,v)$, define:

\[ d_e(f,g)= \left\{ \begin{tabular}{l} $0 \qquad  \mbox{ if }\
f=g,$\\

$e^{-t_0} \quad \mbox{ if } f\ne g \mbox{ and } t_0=sup\{t\geq 0 |
\ f(t)=g(t)\}$\end{tabular}
 \right.
\]
\end{definicion}

\vspace{0.5cm}

\begin{nota} Abusing of the notation, we sometimes identify
the element of the end space with its image on the tree. This will
be usually called \emph{branch}. Also, for non-geodesically
complete $\mathbb{R}$--trees, we also use \emph{branch} to call
any rooted non-extendable isometric embedding, making distinction
between finite and infinite branches.
\end{nota}

\begin{prop} For any point in a rooted $\mathbb{R}$--tree, $x\in (T,v)$, there
is a branch $F$ and some $t\in [0,\infty)$ such that \ $F(t)=x$.
\end{prop}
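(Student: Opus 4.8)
The plan is to obtain the desired branch by maximally prolonging the arc $[v,x]$, using Zorn's lemma. First I would fix, via the definition of an $\mathbb{R}$--tree, an isometric parametrization $\gamma\colon[0,d(v,x)]\to T$ of the arc $[v,x]$ with $\gamma(0)=v$ and $\gamma(d(v,x))=x$. Then I would introduce the poset $\mathcal{P}$ whose elements are the isometric embeddings $f\colon D_f\to T$ with $f(0)=v$, where $D_f$ is a subinterval of $[0,\infty)$ of one of the forms $[0,a]$, $[0,a)$ or $[0,\infty)$ satisfying $D_f\supseteq[0,d(v,x)]$ and $f|_{[0,d(v,x)]}=\gamma$, ordered by $f\le g$ iff $D_f\subseteq D_g$ and $g|_{D_f}=f$. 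This poset is nonempty, since $\gamma\in\mathcal{P}$.

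Next I would verify that every chain $\{f_i\}_{i\in I}$ in $\mathcal{P}$ admits an upper bound. The union $D:=\bigcup_{i}D_{f_i}$ is again an interval of one of the admissible forms containing $0$, and the map $f\colon D\to T$ given by $f(t)=f_i(t)$ whenever $t\in D_{f_i}$ is well defined because the chain is compatible. It is an isometric embedding: any two parameters $s,t\in D$ lie in a common $D_{f_i}$, since the chain is totally ordered, so $d(f(s),f(t))=d(f_i(s),f_i(t))=|s-t|$. Hence $f\in\mathcal{P}$ is an upper bound of the chain, and Zorn's lemma yields a maximal element $F\in\mathcal{P}$.

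Finally I would check that $F$ is a branch in the sense of the Remark, i.e.\ a rooted non-extendable isometric embedding. If some isometric embedding $\tilde F$ defined on a strictly larger admissible interval restricted to $F$, then $\tilde F$ would still restrict to $\gamma$ on $[0,d(v,x)]$, so $\tilde F\in\mathcal{P}$ and $\tilde F>F$, contradicting maximality. Since $F$ extends $\gamma$, we have $F(d(v,x))=\gamma(d(v,x))=x$, and the proposition holds with this $F$ and $t=d(v,x)$; in the degenerate case $x=v$ the same argument works with $t=0$. The only point that genuinely requires care — and the only possible obstacle — is confirming that a nested union of isometric embeddings all rooted at $v$ is again an isometric embedding with domain an interval of the prescribed type; the rest is a routine application of Zorn's lemma.
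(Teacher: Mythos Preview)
The paper does not actually supply a proof of this proposition: it sits in the Preliminaries section, where the authors explicitly say they ``summarize without proofs some results which are relevant to this paper,'' importing them from \cite{Hug} and \cite{M-M}. So there is no argument in the paper to compare against.

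That said, your Zorn's-lemma argument is correct and is the natural way to establish the statement in this generality. The verification that a chain in $\mathcal{P}$ has an upper bound is exactly as you describe: the union of the nested domains is again an interval of the required shape containing $[0,d(v,x)]$, and any two parameters lie in a common member of the chain, so the glued map is isometric. Maximality then forces non-extendability, which is precisely the paper's definition of a branch (the Remark preceding the proposition). Your handling of the degenerate case $x=v$ is fine. The only cosmetic point is that allowing half-open domains $[0,a)$ is necessary for the chain argument to go through, and you do allow them; a maximal element with such a domain is still a rooted non-extendable isometric embedding, hence a branch in the paper's sense, and $d(v,x)$ lies strictly inside it, so nothing breaks.
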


\begin{prop} If $(T,v)$ is a rooted $\mathbb{R}$--tree,
then $(end(T,v),d_e)$ is a complete ultrametric space of diameter
$\leq 1$.
\end{prop}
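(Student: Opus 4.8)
The plan is to isolate one structural fact about $end(T,v)$ and deduce everything from it. \textbf{The key lemma} I would prove first is what might be called the \emph{initial-segment property}: for any $f,g\in end(T,v)$, if $f(t)=g(t)$ for some $t\ge 0$, then $f(s)=g(s)$ for every $s\in[0,t]$. Indeed, $f|_{[0,t]}$ is an isometric embedding of $[0,t]$ into $T$ with $f(0)=v$, so its image is an arc from $v$ to $f(t)$; by unique arcwise connectedness this arc is exactly $[v,f(t)]$, and $f|_{[0,t]}$ is then the parametrization of $[v,f(t)]$ by arclength measured from $v$, which is unique. The same applies to $g|_{[0,t]}$ with respect to $[v,g(t)]=[v,f(t)]$, so the two restrictions coincide. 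A first consequence is that the set $A(f,g):=\{t\ge 0 : f(t)=g(t)\}$ is an interval containing $0$ (nonempty since $f(0)=v=g(0)$), and since $f,g$ are continuous it is closed; hence when $f\ne g$ it is a compact interval $[0,t_0]$ with $t_0<\infty$ (an unbounded $A(f,g)$ would force $f=g$). This shows at once that $d_e$ is well defined, that $d_e(f,g)=0$ iff $f=g$, and, together with the obvious symmetry, that $d_e$ is a well-defined nonnegative symmetric function; and since $t_0\ge 0$ always, $d_e(f,g)=e^{-t_0}\le 1$, giving the bound on the diameter.

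\textbf{Second}, I would check the strong (ultrametric) triangle inequality. Given $f,g,h$, write $t_0,t_1,t_2$ for the right endpoints of $A(f,g),A(g,h),A(f,h)$ respectively, with the convention $t_i=+\infty$ when the two maps agree. For every $t<\min\{t_0,t_1\}$ we have $f(t)=g(t)$ and $g(t)=h(t)$, hence $f(t)=h(t)$, so $t\le t_2$; taking the supremum yields $t_2\ge\min\{t_0,t_1\}$, i.e. $e^{-t_2}\le\max\{e^{-t_0},e^{-t_1}\}$, which is precisely $d_e(f,h)\le\max\{d_e(f,g),d_e(g,h)\}$ (the degenerate cases where two of $f,g,h$ coincide being immediate). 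In particular the ordinary triangle inequality holds, so $(end(T,v),d_e)$ is an ultrametric space of diameter $\le 1$.

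\textbf{Finally}, for completeness let $(f_n)$ be a Cauchy sequence. Choose $N_1\le N_2\le\cdots$ so that $d_e(f_n,f_m)\le e^{-k}$ whenever $n,m\ge N_k$; by the definition of $d_e$ and the initial-segment property this means $f_n|_{[0,k]}=f_m|_{[0,k]}$ for all $n,m\ge N_k$. Thus for each fixed $t$ the sequence $\bigl(f_n(t)\bigr)_n$ is eventually constant, and the common eventual value defines a map $f:[0,\infty)\to T$ with $f(0)=v$. For arbitrary $s,t$, pick $k>\max\{s,t\}$ and $n\ge N_k$; then $f(s)=f_n(s)$ and $f(t)=f_n(t)$, so $d(f(s),f(t))=|s-t|$, i.e. $f$ is an isometric embedding and $f\in end(T,v)$. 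Moreover, for $n\ge N_k$ we get $f_n|_{[0,k]}=f|_{[0,k]}$, hence $d_e(f_n,f)\le e^{-k}$; letting $k\to\infty$ shows $f_n\to f$ in $end(T,v)$.

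\textbf{Expected main difficulty.} There is no deep obstacle: the whole argument hinges on the initial-segment property, and the only points demanding care are that the supremum in the definition of $d_e$ is finite and attained when $f\ne g$ (so that $d_e$ is a genuine metric, not merely a pseudometric) and the bookkeeping of the indices $N_k$ in the completeness step. Once the initial-segment property is established, everything else is formal.
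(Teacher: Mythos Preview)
Your argument is correct and complete: the initial-segment property is exactly the right structural fact, and from it the well-definedness of $d_e$, the ultrametric inequality, the diameter bound, and completeness all follow as you describe. Note, however, that the paper does not actually prove this proposition; it appears in the preliminaries section, which the authors explicitly say ``summarize[s] without proofs'' results taken from \cite{Hug} and \cite{M-M}, so there is no proof in the paper to compare your approach against.
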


Let U be a complete ultrametric space with diameter $\leq 1$,
define:
\begin{displaymath}T_U :=\frac{U\times
[0,\infty)}{\sim}\end{displaymath}
with $(\alpha,t)\sim(\beta,t')\Leftrightarrow t=t' \quad $and$
\quad \alpha,\beta \in U \quad$ such that $\quad
d(\alpha,\beta)\leq e^{-t}.$ \vspace{0.5cm}

Given two points in $T_U$ represented by equivalence classes
$[x,t],[y,s]$ with $(x,t),(y,s)\in U\times [0,\infty)$ define a
metric on $T_U$ by:
\[D([x,t],[y,s])=\left\{
\begin{tabular}{l} $|t-s| \qquad \qquad \qquad \qquad \qquad
\qquad \mbox{ if }
x=y,$\\

$t+s-2\min\{-ln(d(x,y)),t,s\} \quad \mbox{ if } x\ne
y.$\end{tabular}
 \right.\]

\begin{prop}\label{induced tree} $(T_U,D)$ is a geodesically complete rooted $\mathbb{R}$-tree.
\end{prop}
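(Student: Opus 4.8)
The plan is to check, in order: that $\sim$ is an equivalence relation, that $D$ is a well-defined metric, that $(T_U,D)$ is a geodesic space, that it satisfies the four-point condition (hence is an $\mathbb{R}$-tree), and finally that it is geodesically complete, with root the class $v$ of all pairs $(\alpha,0)$.

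First I would observe that transitivity of $\sim$ is exactly the ultrametric inequality on $U$: if $d(\alpha,\beta)\le e^{-t}$ and $d(\beta,\gamma)\le e^{-t}$ then $d(\alpha,\gamma)\le\max\{d(\alpha,\beta),d(\beta,\gamma)\}\le e^{-t}$; reflexivity and symmetry are immediate, and since $U$ has diameter at most $1$ all pairs $(\alpha,0)$ are identified, producing the root $v$. To handle $D$ cleanly I would introduce two auxiliary functions on $T_U$: the \emph{height} $h([\alpha,t])=t$, and, for $p=[\alpha,t]$ and $q=[\beta,s]$, the \emph{meeting height} $m(p,q)=\min\{t,s\}$ if $\alpha=\beta$ and $m(p,q)=\min\{t,s,-\ln d(\alpha,\beta)\}$ if $\alpha\ne\beta$. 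A short computation with the ultrametric inequality shows that $h$ and $m$ do not depend on the chosen representatives, that $0\le m(p,q)\le\min\{h(p),h(q)\}$, and that $D(p,q)=h(p)+h(q)-2\,m(p,q)$ coincides with the formula in the statement. Positivity and symmetry of $D$ are then read off directly; for the triangle inequality the key point is the inequality
\[ m(p,r)\ \ge\ \min\{m(p,q),\,m(q,r)\}\qquad\text{for all }p,q,r\in T_U, \]
which again reduces to the ultrametric inequality in $U$ (comparing the three numbers $-\ln d$ of the branches involved, using that their two smallest values agree), and from $D=h+h-2m$ together with $m(p,q)\le h(q)$ and $m(q,r)\le h(q)$ this inequality yields $D(p,r)\le D(p,q)+D(q,r)$ by a one-line manipulation.

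Next I would produce geodesics explicitly: for $p=[\alpha,t]$, $q=[\beta,s]$ and $m=m(p,q)$, concatenate the segment $u\mapsto[\alpha,t-u]$ on $[0,t-m]$ with $u\mapsto[\beta,m+(u-(t-m))]$ on $[t-m,D(p,q)]$; the two pieces match at the point $[\alpha,m]=[\beta,m]$ (legitimate because $m\le -\ln d(\alpha,\beta)$), each piece is isometric since $D([\alpha,a],[\alpha,b])=|a-b|$, and a check of the cross terms (the only nontrivial case being $m=-\ln d(\alpha,\beta)$) shows the concatenation is an isometric embedding of $[0,D(p,q)]$. Thus $(T_U,D)$ is geodesic. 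To upgrade this to the $\mathbb{R}$-tree axiom I would verify the four-point (equivalently, $0$-hyperbolicity) condition: substituting $D=h+h-2m$ it becomes $m(p,q)+m(r,s)\ge\min\{m(p,r)+m(q,s),\,m(p,s)+m(q,r)\}$, which once more follows from ultrametricity of $U$; a geodesic space satisfying the four-point condition is an $\mathbb{R}$-tree (see \cite{Be}, \cite{Hug}). Finally, for geodesic completeness, given an isometric embedding $f\colon[0,t]\to T_U$ with $f(0)=v$ one has $D(v,f(t))=t$, so $f(t)=[\alpha,t]$ for some $\alpha\in U$; the unique arc from $v$ to $[\alpha,t]$ is the image of the ray $\iota_\alpha\colon r\mapsto[\alpha,r]$, whence $f=\iota_\alpha|_{[0,t]}$ extends to the geodesic ray $\iota_\alpha\colon[0,\infty)\to T_U$.

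I expect the main obstacle to be passing from "geodesic" to "uniquely arcwise connected with arcs isometric to intervals", i.e.\ genuinely pinning down the tree structure rather than the metric; the reductions above move essentially all the content into the single ultrametric inequality of $U$, so the remaining work (the well-definedness bookkeeping, the piecewise isometry check, and the four-point inequality among the $m$'s) is routine case analysis. One could alternatively bypass the four-point characterization by showing directly that each point $[\alpha,r]$ with $r>0$ separates the subtree $T_{[\alpha,r]}=\{[\beta,u]:u\ge r,\ d(\alpha,\beta)\le e^{-r}\}$ from $v$, which forces every arc from $v$ to $[\alpha,t]$ to contain $\iota_\alpha([0,t])$ and hence to equal it; but the four-point route is shorter. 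The construction $T_U$ and this proposition are essentially due to Hughes \cite{Hug}.
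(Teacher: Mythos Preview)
Your argument is correct. Note, however, that the paper does not actually prove this proposition: it appears in the preliminaries section, which the authors explicitly say ``summarize[s] without proofs some results'' drawn from \cite{Hug} and \cite{M-M}. So there is no proof in the paper to compare against; the result is quoted from Hughes, as you yourself observe at the end.

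Your route---height and meeting height (i.e.\ the Gromov product with respect to the root), the ultrametric-type inequality $m(p,r)\ge\min\{m(p,q),m(q,r)\}$, explicit two-legged geodesics, and then the four-point condition to conclude $\mathbb{R}$-tree---is the standard one and matches how this is handled in \cite{Hug}. The only place where I would ask you to be slightly more explicit is the well-definedness of $m$: when you change representatives $[\alpha,t]=[\alpha',t]$ with $\alpha\ne\alpha'$ and $d(\alpha,\alpha')\le e^{-t}$, one must check that $\min\{t,s,-\ln d(\alpha,\beta)\}=\min\{t,s,-\ln d(\alpha',\beta)\}$, which is a short ultrametric case split (if $d(\alpha,\beta)\ne d(\alpha',\beta)$ then both are $\le d(\alpha,\alpha')\le e^{-t}$, so the minimum is $t$ in either computation). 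Everything else is fine as stated.
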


Some of these tools can be adapted for the more general case of rooted $\mathbb{R}$--trees (not necessarily geodesically complete) using the fact that for any rooted $\mathbb{R}$--tree, $(T,v)$, there exists a unique geodesically
complete subtree, $(T_\infty,v)\subset (T,v)$, that is maximal.

\begin{lema}\label{retracto} If the metric of $(T_\infty,v)$ is proper then it
is a deformation retract of $(T,v)$.
\end{lema}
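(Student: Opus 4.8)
My plan is to realize $T_\infty$ as the image of a nearest-point retraction of $(T,v)$ and then deformation-retract $T$ onto it by sliding each point back along a geodesic. The first step is where the hypothesis is used: a proper metric space is complete, and a complete subspace of the metric space $(T,d)$ is closed, so $T_\infty$ is a \emph{closed} subset of $T$ (this is exactly the point at which properness enters). Being a subtree of the $\mathbb{R}$-tree $T$, $T_\infty$ is also convex, i.e.\ for $x,y\in T_\infty$ the arc $[x,y]$ of $T$ lies in $T_\infty$ (concretely, $T_\infty$ is the union of all geodesic rays of $T$ issuing from $v$).

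Next I would define $r\colon T\to T_\infty$ as follows: for $x\in T$ the set $[v,x]\cap T_\infty$ is a closed subinterval of the arc $[v,x]$ containing $v$, hence of the form $[v,r(x)]$ for a unique point $r(x)\in T_\infty$ --- here closedness of $T_\infty$ is precisely what guarantees that this ``last point of $T_\infty$ on the way from $v$ to $x$'' actually exists. A short median argument in the $\mathbb{R}$-tree then shows that $r(x)$ is the unique point of $T_\infty$ nearest to $x$, that $r|_{T_\infty}=\mathrm{id}$, and that $r$ is non-expansive (the standard fact that nearest-point projection onto a closed convex subset of an $\mathbb{R}$-tree is $1$-Lipschitz); in particular $r$ is a continuous retraction.

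Finally I would define $H\colon T\times[0,1]\to T$ by letting $H(x,s)$ be the unique point of the arc $[x,r(x)]$ at distance $s\cdot d(x,r(x))$ from $x$. Then $H(\cdot,0)=\mathrm{id}_T$, $H(\cdot,1)=r$, and $H(x,s)=x$ for all $x\in T_\infty$ and all $s$, so $H$ exhibits $T_\infty$ as a (strong) deformation retract of $T$ as soon as $H$ is continuous. Establishing continuity of $H$ is the only substantial point and the one I expect to be the main obstacle: given $(x,s)$ and a nearby pair $(y,s')$, the points $x,y,r(x),r(y)$ together with the geodesics $[x,r(x)]$ and $[y,r(y)]$ all lie inside the finite $\mathbb{R}$-subtree spanned by $\{x,y,r(x),r(y)\}$, where $d(H(x,s),H(y,s'))$ can be estimated explicitly in terms of $d(x,y)$, $|s-s'|$, $d(r(x),r(y))\le d(x,y)$ and $|d(x,r(x))-d(y,r(y))|\le 2\,d(x,y)$. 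The slightly delicate sub-case is $r(x)=x$ (so $d(x,r(x))=0$), where one checks directly that $d(H(y,s'),x)\le 2\,d(x,y)$ using $d(y,r(y))=d(y,T_\infty)\le d(y,x)$. Carrying out this elementary finite-tree estimate completes the proof.
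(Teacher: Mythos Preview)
The paper does not include its own proof of this lemma: it appears in the preliminaries section, which explicitly ``summarize[s] without proofs some results'' imported from \cite{Hug} and \cite{M-M}. Your argument is correct, and in fact it is precisely the construction the authors themselves invoke later in the proof of Proposition~\ref{ML-prop}: the nearest-point retraction $r\colon T\to T_\infty$ together with the shortest-path homotopy (Definition~\ref{short_homot}) from the identity to $r$. Your identification of where properness enters --- namely to force $T_\infty$ to be closed so that the ``last point of $T_\infty$ on $[v,x]$'' exists --- is exactly right, and the counterexample following the lemma in the paper illustrates what goes wrong when $T_\infty$ fails to be closed. The continuity of $H$ that you flag as the main technical point is routine once one knows (as the paper takes for granted when using Definition~\ref{short_homot}) that in an $\mathbb{R}$-tree the map sending $(a,b,t)$ to the point of $[a,b]$ at distance $t\cdot d(a,b)$ from $a$ is jointly continuous; your finite-subtree estimate is one clean way to see this.
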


Of course in the framework of simplicial trees the subtree is
always a deformation retract but this is not true, in general, for
$\mathbb{R}$--trees.

\begin{ejp} Consider the following $\mathbb{R}$--tree $(T,v)$.
\end{ejp}

\begin{figure}[H]
\centering \scalebox{0.6}{\includegraphics{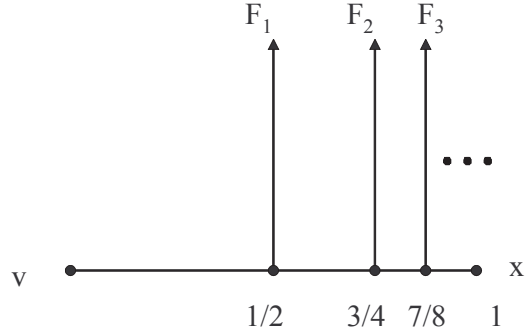}}
\caption{The maximal geodesically subtree is not a retract.}
\end{figure}

$(T,v)$ has a finite branch, $F_0$, of length 1 (from the root to
$x$), and geodesically complete branches $F_i$ bifurcating from
$F_0$ at a distance $\frac{2^i-1}{2^i}$ from the root.

The geodesically complete subtree $(T_\infty,v)$ is
$(T,v)\backslash \{x\}$. Clearly, in this case $(T_\infty,v)$, is
not a retract of $(T,v)$.

\begin{prop}\label{imagensubarboles}
Let $f:(T,v) \to (T',w)$ be a rooted continuous and metrically
proper map, and let $M>0$ and $N>0$ such that \quad
$f^{-1}(B(w,M)) \subset B(v,N)$, then
\[\forall c\in \ \partial B(v,N) \ \exists ! \ c' \in
\partial B(w,M) \ \mbox{such that} \quad f(T_c)\subset T'_{c'}.\]
\end{prop}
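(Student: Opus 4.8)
The plan is to produce the point $c'$ by hand and then verify the inclusion directly through the arc structure of $T'$. Fix $c\in\partial B(v,N)$; in an $\mathbb R$--tree this means $d(v,c)=N$, so in particular $c\notin B(v,N)$. Since $f^{-1}(B(w,M))\subset B(v,N)$, it follows that $f(c)\notin B(w,M)$, i.e.\ $d(w,f(c))\ge M$, and hence the arc $[w,f(c)]$ contains a unique point $c'$ with $d(w,c')=M$. This is the candidate: $c'\in\partial B(w,M)$, and $f(c)\in T'_{c'}$ by construction. It remains to prove $f(T_c)\subset T'_{c'}$.

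The first small observation is that $[c,x]\subset T_c$ for every $x\in T_c$: indeed $c\in[v,x]$, so $[c,x]$ is a subarc of $[v,x]$, hence the unique arc from $c$ to $x$, and for $y\in[c,x]$ the subarc of $[v,x]$ from $v$ to $y$ is $[v,c]\cup[c,y]$, which contains $c$. Secondly, every point of $T_c$ lies at distance $\ge N$ from $v$, since the arc joining it to $v$ passes through $c$; hence $T_c\cap B(v,N)=\emptyset$ and therefore $f(T_c)\cap B(w,M)=\emptyset$. Now fix $x\in T_c$. Since $[c,x]$ is connected and $f$ is continuous, $f([c,x])$ is a connected subset of $T'$ containing $f(c)$ and $f(x)$; as every point of the arc $[f(c),f(x)]$ other than its endpoints separates $f(c)$ from $f(x)$ in the tree $T'$, we get $[f(c),f(x)]\subset f([c,x])$, so $[f(c),f(x)]\cap B(w,M)=\emptyset$. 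Let $m$ be the median of $w,f(c),f(x)$, i.e.\ the common point of the three arcs $[w,f(c)]$, $[w,f(x)]$, $[f(c),f(x)]$. Because $m\in[f(c),f(x)]$ we have $d(w,m)\ge M$; since $[w,m]\subset[w,f(c)]$ and $c'$ is the point of $[w,f(c)]$ at distance $M\le d(w,m)$ from $w$, necessarily $c'\in[w,m]\subset[w,f(x)]$. Thus $c'\in[w,f(x)]$, i.e.\ $f(x)\in T'_{c'}$, proving $f(T_c)\subset T'_{c'}$.

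Uniqueness is then immediate: $c\in T_c$, so any $c'$ with $f(T_c)\subset T'_{c'}$ must satisfy $f(c)\in T'_{c'}$, i.e.\ $c'\in[w,f(c)]$ with $d(w,c')=M$; but the arc $[w,f(c)]$ has only one point at distance $M$ from $w$. Hence the point constructed above is the only one that works.

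The argument is short, and its one potentially delicate point is not a topological one: one should resist the temptation to argue that ``$f(T_c)$ is connected, the sets $T'_d$ for $d\in\partial B(w,M)$ are pairwise disjoint, so $f(T_c)$ lies in one of them,'' because the subtrees $T'_d$ need not be topologically separated near the sphere $\partial B(w,M)$. Routing the proof through arcs and the median (equivalently, through the fact that interior points of an arc in an $\mathbb R$--tree are cut points) avoids this. The remaining ingredients are entirely elementary: subarcs of the unique arc are again unique arcs, arcs realize the distance, and in a tree $\partial B(v,N)=\{x:d(v,x)=N\}$, so sphere points lie outside the open ball $B(v,N)$.
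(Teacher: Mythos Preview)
The paper does not actually supply a proof of this proposition: it appears in the preliminaries section, where the authors explicitly say they ``summarize without proofs'' results from \cite{Hug} and \cite{M-M}. So there is no in-paper argument to compare against.

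Your proof is correct. The construction of $c'$ as the unique point of $[w,f(c)]$ at distance $M$ from $w$ is the natural one, and the verification that $f(T_c)\subset T'_{c'}$ via the median of $w,f(c),f(x)$ is clean: the key step is that $[f(c),f(x)]$ misses $B(w,M)$ (because its image under the continuous $f$ of the arc $[c,x]\subset T_c$ covers it, and $f(T_c)\cap B(w,M)=\emptyset$), forcing the median $m$ to satisfy $d(w,m)\ge M$, hence $c'\in[w,m]\subset[w,f(x)]$. Your closing remark is also well taken: the ``naive'' connectedness argument via the partition $\{T'_d:d\in\partial B(w,M)\}$ is not quite enough on its own, since these subtrees may accumulate on one another along the sphere; routing through arcs and the median avoids that subtlety. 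Nothing in your argument requires the rootedness $f(v)=w$ or metric properness beyond the single hypothesis $f^{-1}(B(w,M))\subset B(v,N)$, which is fine.
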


\begin{definicion}\label{short_homot} If $f,g:X\to T$ are two continuous maps from any
topological space $X$ to a tree $T$ then the \emph{shortest path
homotopy} is a homotopy $H:X\times I \to T$ of $f$ to $g$ such
that if $j_x:[0,d(f(x),g(x))]\to [f(x),g(x)]$ is the isometric
immersion of the subinterval $[0,d(f(x),g(x))]\subset \mathbb{R}$
into $T$ whose image is the shortest path between $f(x)$ and
$g(x)$, then $H(x,t)=j_x(t\cdot d(f(x),g(x))) \ \forall t \in I \
\forall x\in X$.
\end{definicion}

\begin{definicion} Given $f,f':(T,v) \to (T',w)$ two rooted continuous metrically proper
maps, let $H$ be a continuous map $H: T\times I \to T'$ with
\mbox{$H(v,t)=w$} \ $\forall t \in I$ such that $\forall M>0,
\exists N>0$ \ such that $H^{-1}(B(v,M))\subset B(v,N)\times I$.
Then, $H$ is a \emph{rooted metrically proper homotopy} of $f$ to
$f'$ if \quad $H|_{T\times \{0\}}=f$ and $H|_{T\times \{1\}}=f'$.
\end{definicion}

\textbf{Notation:} $f\simeq_{Mp} f'$ if and only if there exists a
rooted metrically proper homotopy of $f$ to $f'$.

\textbf{Notation:} We will denote $f \simeq_{L} f'$, rooted
metrically proper non-expansive homotopic, if there is a rooted
metrically proper homotopy of $f$ to $g$ which is non-expansive at
each level.

\textbf{Notation:} We will denote $f \simeq_{C} f'$, rooted coarse
homotopic, if there is a rooted metrically proper homotopy of $f$
to $g$ which is coarse at each level.

Consider the categories,

\vspace{0.5cm}

$\mathcal{T}$: Geodesically complete rooted $\mathbb{R}$-trees and
rooted metrically proper homotopy classes of rooted continuous
metrically proper maps.

\vspace{0.5cm}

$\mathcal{U}$: Complete ultrametric spaces of diameter $\leq 1$
and uniformly continuous maps. \vspace{0.5cm}

Our main results in \cite{M-M} are the following:

\begin{teorema}\label{tma equiv} There is
an equivalence of categories between $\mathcal{T}$ and
$\mathcal{U}$.
\end{teorema}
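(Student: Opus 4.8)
The plan is to exhibit explicit functors $\mathcal{E}\colon\mathcal{T}\to\mathcal{U}$ and $\mathcal{R}\colon\mathcal{U}\to\mathcal{T}$ together with natural isomorphisms $\mathcal{E}\circ\mathcal{R}\cong\mathrm{Id}_{\mathcal{U}}$ and $\mathcal{R}\circ\mathcal{E}\cong\mathrm{Id}_{\mathcal{T}}$. On objects there is nothing to invent: $\mathcal{E}$ sends $(T,v)$ to the end space $(end(T,v),d_e)$ of Definition~\ref{end}, which lies in $\mathcal{U}$ by the proposition quoted above, and $\mathcal{R}$ sends $U$ to $(T_U,D)$ of Proposition~\ref{induced tree}, which is geodesically complete and hence an object of $\mathcal{T}$. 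The content lies in the behaviour on morphisms and in the verification that the functors are mutually inverse.

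For a rooted continuous metrically proper $f\colon(T,v)\to(T',w)$ I would define $\mathcal{E}(f)\colon end(T,v)\to end(T',w)$ as follows: for each $M>0$ pick $N=N(M)$ with $f^{-1}(B(w,M))\subset B(v,N)$, and for a branch $F$ let $c'_M\in\partial B(w,M)$ be the unique point produced by Proposition~\ref{imagensubarboles} from $c=F(N)\in\partial B(v,N)$, so that $f(T_{c})\subset T'_{c'_M}$; the points $c'_M$ are nested and determine a branch, which I take to be $\mathcal{E}(f)(F)$. Uniform continuity of $\mathcal{E}(f)$ is just metric properness of $f$ rewritten: $d_e(F,G)\le e^{-N(M)}$ forces $d_e(\mathcal{E}(f)(F),\mathcal{E}(f)(G))\le e^{-M}$. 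Functoriality follows at once from the uniqueness clause in Proposition~\ref{imagensubarboles}. To see that $\mathcal{E}$ descends to $\simeq_{Mp}$, let $H$ be a rooted metrically proper homotopy from $f$ to $f'$, fix $M$, choose $N$ with $H^{-1}(B(w,M))\subset B(v,N)\times I$, and for $x=F(N')$ with $N'>N$ observe that $H(\{x\}\times I)$ is connected in $T'$ and misses $B(w,M)$, hence lies in a single $T'_{c'}$; since $f(T_x)$ and $f'(T_x)$ are connected, meet $H(\{x\}\times I)$, and also miss $B(w,M)$, both are contained in that same $T'_{c'}$, and letting $M\to\infty$ gives $\mathcal{E}(f)(F)=\mathcal{E}(f')(F)$.

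For a uniformly continuous $\phi\colon U\to U'$, from a modulus of uniform continuity I would build a non-decreasing proper function $\rho\colon[0,\infty)\to[0,\infty)$ with $d(x,y)\le e^{-t}\Rightarrow d(\phi(x),\phi(y))\le e^{-\rho(t)}$ and set $\mathcal{R}(\phi)[x,t]:=[\phi(x),\rho(t)]$; this respects the identification $\sim$, and from the formula for $D$ one checks it is continuous, root-preserving and, by properness of $\rho$, metrically proper. This $\mathcal{R}$ is functorial only up to homotopy: $\mathcal{R}(\psi\circ\phi)$ versus $\mathcal{R}(\psi)\circ\mathcal{R}(\phi)$, and $\mathcal{R}(\mathrm{id})$ versus $\mathrm{id}$, differ only in the radial reparametrization, so they agree on branches and are joined by the shortest path homotopy of Definition~\ref{short_homot}, which one verifies is rooted and metrically proper; since $\mathcal{T}$ is already a homotopy category this suffices.

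Finally, the unit $U\to end(T_U)$ sending $x$ to the branch $t\mapsto[x,t]$ is a bijection and, comparing $d_e$ with $D$, an isometry; for geodesically complete $(T,v)$ the evaluation $(F,t)\mapsto F(t)$ descends to a map $T_{end(T,v)}\to T$ which is surjective because every point lies on a branch (proposition above), injective because $F(t)=G(t)$ iff $(F,t)\sim(G,t)$, and isometric by the metric formulas, hence an isomorphism in $\mathcal{T}$; naturality of both transformations is checked on branches, invoking shortest path homotopies wherever an equality of tree maps must be weakened to $\simeq_{Mp}$. I expect the delicate points to be precisely these last ones: because morphisms of $\mathcal{T}$ live only up to metrically proper homotopy, every desired identity of maps has to be realised by an explicit homotopy and one must confirm that the natural candidate is both rooted and metrically proper; the other awkward step is extracting the proper reparametrization $\rho$ from a bare modulus of continuity and the bookkeeping that makes $\mathcal{R}$ functorial up to homotopy.
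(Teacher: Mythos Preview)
The paper does not actually prove this theorem: it is quoted in the preliminaries section as one of the main results of the authors' earlier paper \cite{M-M}, with no proof given here. So there is no ``paper's own proof'' to compare against.

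That said, your outline is the standard one and is presumably close to what \cite{M-M} does: take $\mathcal{E}=end$ and $\mathcal{R}=T_{(-)}$ on objects, push a metrically proper map to the end space via Proposition~\ref{imagensubarboles}, lift a uniformly continuous map to the tree by a proper radial reparametrization extracted from a modulus of continuity, and realise the unit and counit by the evident evaluation maps, patching all not-quite-equalities with shortest path homotopies. The places you flag as delicate are exactly the right ones: checking that every shortest path homotopy invoked is genuinely rooted and metrically proper, and manufacturing a \emph{non-decreasing proper} $\rho$ from an arbitrary modulus (one typically replaces the raw modulus by $\rho(t)=\sup\{s:\ d(x,y)\le e^{-t}\Rightarrow d(\phi x,\phi y)\le e^{-s}\}$ and then, if needed, by $\min(\rho(t),t)$ or a similar monotone proper minorant). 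Nothing in your sketch looks wrong; it is a faithful reconstruction of the expected argument.
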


\begin{cor}\label{cor_lip} There is an equivalence of categories between $\mathcal{U}$
and the category of geodesically complete rooted
$\mathbb{R}$-trees with rooted metrically proper non-expansive
homotopy classes of rooted metrically proper non-expansive maps.
\end{cor}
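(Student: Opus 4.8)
The plan is to reduce the statement to Theorem~\ref{tma equiv}. Write $\mathcal{T}_{L}$ for the category in the statement: objects are geodesically complete rooted $\mathbb{R}$--trees and morphisms are $\simeq_{L}$--classes of rooted continuous metrically proper non-expansive maps. There is an evident forgetful functor $J\colon\mathcal{T}_{L}\to\mathcal{T}$ which is the identity on objects, sends a rooted metrically proper non-expansive map to itself, and sends a class $[f]_{L}$ to the $\simeq_{Mp}$--class containing it; this is well defined because a rooted metrically proper non-expansive homotopy is in particular a rooted metrically proper homotopy, so $\simeq_{L}$ refines $\simeq_{Mp}$. Since $J$ is bijective on objects it is essentially surjective, so it suffices to show $J$ is full and faithful; then $\mathcal{T}_{L}\simeq\mathcal{T}\simeq\mathcal{U}$ by composing with Theorem~\ref{tma equiv}.

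For faithfulness, let $f,g\colon(T,v)\to(T',w)$ be rooted continuous metrically proper non-expansive maps with $f\simeq_{Mp}g$; I claim $f\simeq_{L}g$, witnessed by the shortest path homotopy $\widehat{H}$ of Definition~\ref{short_homot} from $f$ to $g$. It is a homotopy of $f$ to $g$, and it is rooted because $\widehat{H}(v,t)=j_{v}\big(t\cdot d(f(v),g(v))\big)=j_{v}(0)=w$. It is non-expansive at each level: fixing $t$ and writing $\widehat{H}(x,t)=\gamma_{x}(t)$ with $\gamma_{x}$ the affine parametrization on $[0,1]$ of the arc $[f(x),g(x)]$, convexity of the metric along geodesics in a tree gives
\[
d\big(\widehat{H}(x,t),\widehat{H}(y,t)\big)\le(1-t)\,d(f(x),f(y))+t\,d(g(x),g(y))\le d(x,y).
\]
It is metrically proper: given $M>0$, choose a rooted metrically proper homotopy $H$ realizing $f\simeq_{Mp}g$ and $N>0$ with $H^{-1}(B(w,M))\subset B(v,N)\times I$; if $d(v,x)\ge N$ then the path $H(x,\cdot)$ misses $B(w,M)$, hence so does the arc $[f(x),g(x)]$ (the image of any path between two points of a tree contains the arc joining them), and this arc is exactly the image of $\widehat{H}(x,\cdot)$; therefore $\widehat{H}^{-1}(B(w,M))\subset B(v,N)\times I$. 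So $\widehat{H}$ is a rooted metrically proper non-expansive homotopy and $f\simeq_{L}g$.

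For fullness, given a rooted continuous metrically proper $f\colon(T,v)\to(T',w)$ I must produce a rooted metrically proper non-expansive $\widetilde{f}$ with $\widetilde{f}\simeq_{Mp}f$. By Theorem~\ref{tma equiv}, whose inverse is, as recalled above, given on objects by $U\mapsto T_{U}$, every geodesically complete $(T,v)$ is isomorphic in $\mathcal{T}$ to $(T_{end(T,v)},*)$; in the geodesically complete case this isomorphism is the isometry $x\mapsto[F_{x},d(v,x)]$, with $F_{x}$ any branch through $x$. Transporting $f$ through these isometries, it suffices to treat maps of the form $T_{\phi}$ for $\phi$ the uniformly continuous induced map on end spaces. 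Uniform continuity gives a non-decreasing $\rho$ with $\rho(0)\ge 0$, $\rho(t)\to\infty$, such that $d(\alpha,\beta)\le e^{-t}$ implies $d(\phi(\alpha),\phi(\beta))\le e^{-\rho(t)}$; replacing $\rho$ by its largest $1$-Lipschitz minorant $\rho'(t)=\inf_{0\le s\le t}\big(\rho(s)+t-s\big)$ — which is still non-decreasing and still tends to $\infty$ — the formula $\widetilde{f}[\alpha,t]=[\phi(\alpha),\rho'(t)]$ defines a rooted continuous map which is non-expansive for the metric $D$ (checked by distinguishing $\phi(\alpha)=\phi(\beta)$ from $\phi(\alpha)\ne\phi(\beta)$ and using that $\rho'$ is $1$-Lipschitz and non-decreasing) and metrically proper (since $B(*,M)=\{[\alpha,t]\,:\,t<M\}$ and $\rho'\to\infty$). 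As $\rho'(t)\to\infty$, the map $\widetilde{f}$ induces $\phi$ again on end spaces; since the end space functor realizes the equivalence of Theorem~\ref{tma equiv} and is therefore faithful, $\widetilde{f}$ and $f$ represent the same morphism of $\mathcal{T}$, i.e. $\widetilde{f}\simeq_{Mp}f$.

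I expect the main obstacle to be the fullness step: verifying that $\widetilde{f}[\alpha,t]=[\phi(\alpha),\rho'(t)]$ is non-expansive for the metric $D$ on the induced trees, and that replacing the modulus of uniform continuity by its $1$-Lipschitz minorant preserves metric properness; and, if one prefers to exhibit a quasi-inverse $\mathcal{U}\to\mathcal{T}_{L}$ explicitly rather than merely invert $J$, the related point that different choices of $\rho'$ give $\simeq_{L}$--equivalent maps, again by a shortest path homotopy whose metric properness rests on a path in a tree sweeping out the geodesic joining its endpoints. The faithfulness step, by contrast, reduces to the single clean observation that the shortest path homotopy is level-wise non-expansive, by convexity in trees, and cannot enter a ball that the connecting geodesics avoid.
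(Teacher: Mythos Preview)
The paper does not give its own proof of this corollary: it is listed in the Preliminaries as one of ``our main results in \cite{M-M}'', summarized without proof. So there is no in-paper argument to compare against. That said, your strategy---factor through the forgetful functor $J\colon\mathcal{T}_L\to\mathcal{T}$ and show it is an isomorphism of categories---is exactly the natural way to deduce the corollary from Theorem~\ref{tma equiv}, and your execution is essentially correct.

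A couple of small points you should tighten. In the fullness step you need $\widetilde{f}$ to be \emph{rooted}, i.e.\ $\rho'(0)=0$; this holds provided you normalize the modulus so that $\rho(0)=0$, which is harmless since the diameter of the target is $\le 1$. It is also worth recording explicitly (you use it implicitly) that the natural map $(T,v)\to T_{end(T,v)}$, $x\mapsto[F_x,d(v,x)]$, is an isometry in the geodesically complete case; this is what lets you transport the problem to $T_U$. Your verification that $\widetilde{f}$ is non-expansive for $D$ is the substantive computation, and it goes through cleanly once one writes $D([\alpha,t],[\beta,s])=t+s-2\min\{-\ln d(\alpha,\beta),t,s\}$ uniformly and uses that $\rho'$ is non-decreasing and $1$-Lipschitz. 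The faithfulness argument via the shortest path homotopy is clean and correct: convexity of the tree metric gives level-wise non-expansiveness, and the observation that any path in a tree sweeps out the arc between its endpoints transfers metric properness from the given homotopy to the shortest path one.
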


\begin{cor}\label{cor_coar} There is an equivalence of categories between $\mathcal{U}$
and the category of geodesically complete rooted
$\mathbb{R}$-trees with rooted coarse homotopy classes of rooted
continuous coarse maps.
\end{cor}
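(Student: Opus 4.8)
The plan is to bootstrap from Corollary~\ref{cor_lip} and Theorem~\ref{tma equiv}, rather than repeat the construction of \cite{M-M}. Write $\mathcal{C}$ for the category in the statement and $\mathcal{L}$ for the category of Corollary~\ref{cor_lip} (the same objects, with rooted metrically proper non-expansive homotopy classes of rooted metrically proper non-expansive maps). A rooted metrically proper non-expansive map is a rooted continuous coarse map whose controlling function is the identity, and a rooted metrically proper non-expansive homotopy is in particular a rooted coarse homotopy; hence there is a functor $\mathcal{I}\colon\mathcal{L}\to\mathcal{C}$ that is the identity on objects. Conversely a rooted continuous coarse map is rooted continuous and metrically proper, and a rooted coarse homotopy is a rooted metrically proper homotopy, so there is a forgetful functor $\mathcal{C}\to\mathcal{T}$; composing it with the functor $\mathrm{end}\colon\mathcal{T}\to\mathcal{U}$ realizing the equivalence of Theorem~\ref{tma equiv} gives a functor $E\colon\mathcal{C}\to\mathcal{U}$. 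Let $S\colon\mathcal{U}\to\mathcal{C}$ be $\mathcal{I}$ precomposed with the quasi-inverse of Corollary~\ref{cor_lip} (on objects $U\mapsto T_U$, on morphisms represented by rooted metrically proper non-expansive maps). The goal is to show $E$ and $S$ are mutually quasi-inverse.

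One composite is formal. Since $\mathrm{end}(T_U)$ is canonically $U$, and since on morphisms the Corollary~\ref{cor_lip} quasi-inverse yields a non-expansive representative of the same $\simeq_{Mp}$-class as the Theorem~\ref{tma equiv} quasi-inverse, the natural isomorphism $\mathrm{end}\circ T_{(-)}\cong\mathrm{id}_{\mathcal{U}}$ of Theorem~\ref{tma equiv} also witnesses $E\circ S\cong\mathrm{id}_{\mathcal{U}}$. For the composite $S\circ E$, recall from \cite{M-M} that the unit $\eta$ of $\mathcal{T}\simeq\mathcal{U}$ has component $\eta_{(T,v)}\colon(T,v)\to T_{\mathrm{end}(T,v)}$, $x\mapsto[F_x,d(v,x)]$ with $F_x$ any geodesic ray through $x$ (the class does not depend on the choice, since two such rays agree at least up to parameter $d(v,x)$). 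I would first check that $\eta_{(T,v)}$ is a rooted isometry: a short computation with the metric $D$, distinguishing the case $x\in[v,y]$ from the case where $[v,x]$ and $[v,y]$ branch at an interior point of both and using that two rays in a tree which once separate never meet again, gives $D(\eta_{(T,v)}(x),\eta_{(T,v)}(y))=d(x,y)$; geodesic completeness makes $\eta_{(T,v)}$ surjective. Hence each $\eta_{(T,v)}$ is an isomorphism in $\mathcal{C}$, and by naturality of $\eta$ in $\mathcal{T}$ together with the compatibility of the two quasi-inverses, the naturality square of $\theta:=\eta^{-1}$ for any rooted continuous coarse $f\colon(T,v)\to(T',w)$ commutes \emph{up to $\simeq_{Mp}$}; both sides, $S(\mathrm{end}(f))$ and $\eta_{(T',w)}\circ f\circ\eta_{(T,v)}^{-1}$, being rooted continuous coarse maps, the equivalence $S\circ E\cong\mathrm{id}_{\mathcal{C}}$ will follow once we know:

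\emph{Comparison Lemma.} Any two rooted continuous coarse maps $(T,v)\to(T',w)$ which are rooted metrically properly homotopic are rooted coarse homotopic.

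To prove the Lemma, let $h_0,h_1$ be such maps and take the shortest path homotopy $H$ of Definition~\ref{short_homot}, $H(x,t)=j_x\bigl(t\cdot d(h_0(x),h_1(x))\bigr)$. Each level $H_t$ is bornologous: in a tree (a $\mathrm{CAT}(0)$ space) the assignment carrying a pair $(a,b)$ to the point at parameter-fraction $t$ of $[a,b]$ is $1$-Lipschitz for the maximum metric on pairs, so $H_t$, being this assignment composed with the pair $(h_0,h_1)$ of bornologous maps, is bornologous with controlling function independent of $t$. And $H$ is a rooted metrically proper homotopy — hence so is each $H_t$: because $h_0\simeq_{Mp}h_1$ forces $\mathrm{end}(h_0)=\mathrm{end}(h_1)$ ($\mathrm{end}$ being a functor on $\mathcal{T}$), for each $M>0$ one may pick $N>0$ with $h_i^{-1}(B(w,M))\subset B(v,N)$ for $i=0,1$, and then for $x$ with $d(v,x)\geq N$ and any branch $F$ through $x$ one gets, by Proposition~\ref{imagensubarboles}, $h_0(x),h_1(x)\in T'_{c'}$ for the common point $c'=\mathrm{end}(h_0)(F)(M)=\mathrm{end}(h_1)(F)(M)\in\partial B(w,M)$; since $T'_{c'}$ is a subtree, the whole segment $[h_0(x),h_1(x)]$, hence all of $H(\{x\}\times I)$, lies in $T'_{c'}$ and so outside $B(w,M)$.

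The step I expect to be the main obstacle is exactly this last one: verifying that $\mathrm{end}(h_0)=\mathrm{end}(h_1)$ really does confine the segment $[h_0(x),h_1(x)]$ to a single deep subtree, so that the shortest path homotopy is metrically proper and not merely continuous. Everything else — the two functors $E,S$, the isometry property of $\eta$, the bornologousness of the levels, and the assembly into an equivalence — is either formal or a routine estimate. Granting the Comparison Lemma, $E$ and $S$ are mutually quasi-inverse and the corollary follows.
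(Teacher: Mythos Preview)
The paper does not prove Corollary~\ref{cor_coar}: it is quoted, together with Theorem~\ref{tma equiv} and Corollary~\ref{cor_lip}, as one of the main results of \cite{M-M}, in the preliminaries section which explicitly ``summarize[s] without proofs'' those results. So there is no in-paper argument to compare against; what you have written is a self-contained derivation from the other two quoted results, and as such it is essentially correct.

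Your overall architecture is sound: the forgetful functor $\mathcal{C}\to\mathcal{T}$ followed by $\mathrm{end}$ gives $E$, the non-expansive quasi-inverse followed by $\mathcal{I}$ gives $S$, and the unit $\eta_{(T,v)}\colon x\mapsto [F_x,d(v,x)]$ is indeed a rooted isometry (your case analysis is the right one; note that geodesic completeness is needed already for $\eta$ to be defined, not only for surjectivity). The Comparison Lemma is the genuine content, and your proof of it works: the convexity of the metric in a tree (or any $\mathrm{CAT}(0)$ space) makes each level of the shortest-path homotopy bornologous with control $\max(\rho_0,\rho_1)$, and the metrically-proper part goes through as you wrote.

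One simplification worth noting for the step you flagged as the obstacle. You invoke $\mathrm{end}(h_0)=\mathrm{end}(h_1)$ to conclude that $h_0(x)$ and $h_1(x)$ land in the same subtree $T'_{c'}$, but you can argue more directly from the \emph{given} metrically proper homotopy $G$: pick $N$ with $G^{-1}(B(w,M))\subset B(v,N)\times I$; then for $d(v,x)\geq N$ the path $t\mapsto G(x,t)$ joins $h_0(x)$ to $h_1(x)$ inside $T'\setminus B(w,M)$, whose connected components are exactly the subtrees $T'_{c'}$ for $c'\in\partial B(w,M)$. Hence $[h_0(x),h_1(x)]\subset T'_{c'}$ without any appeal to the end functor. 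This bypasses the need to unpack how $\mathrm{end}$ acts on morphisms and how that interacts with Proposition~\ref{imagensubarboles}.
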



\section{Inverse sequences}

In \cite{Ser}, Serre gives a description of some correspondence
between inverse sequences and simplicial trees. Here we extend
this relation to some categorial equivalences, considering the
usual morphism between inverse sequences after \cite{MS1}.

\begin{definicion} An inverse sequence $\underline{X}=(X_n,p_n)$ in the category
$\mathcal{C}$ is an inverse system in $\mathcal{C}$ indexed by the
natural numbers.
\end{definicion}

Let us denote $p_{nm}:X_m\to X_n$ the composition $p_n \circ
\cdots \circ p_{m-1}$.

\begin{definicion} A morphism of inverse sequences $(f_n,\Phi):(X_n,p_n)\to
(Y_n,q_n)$ consists of a function $\Phi:\mathbb{N}\to \mathbb{N}$
and morphisms $f_n:X_{\Phi(n)}\to Y_n$ in $\mathcal{C}$ such that
$\forall n'>n$ there exists $m\geq n,n'$ for which $f_n\circ
p_{\Phi(n)m}=q_{nn'}\circ f_{n'}\circ p_{\Phi(n')m}$.
\end{definicion}

There is an equivalence relation $\sim$ between morphisms of
sequences. We say that $(f_n,\Phi)\sim (g_n,\Psi)$ if every $n$
admits some $m\geq \Phi(n),\Psi(n)$ such that $f_n\circ
p_{\Phi(n)m}=g_{n}\circ p_{\Psi(n)m}$.

Let \textbf{Tower-$\mathcal{C}$} be the category whose objects are
inverse sequences in the category \textbf{$\mathcal{C}$} and whose
morphisms are equivalence classes of morphisms of sequences. The
particular case we are mostly going to treat is
\textbf{Tower-Set}, whose objects are inverse sequences in
\textbf{Set}, the category of small sets.

\subsection{Inverse sequence of a tree}

Let $(\Gamma,v)$ a rooted simplicial tree. For each integer $n\geq
0$ let $C_n$ be the set of vertices of $\Gamma$ such that the
distance to the root is $n$. For each vertex $P$ of $C_n$ there is
a unique adjacent vertex $P'$ distant $n-1$ to the root. This
defines a map $f_n:P \to P'$ of $C_n$ to $C_{n-1}$ and hence an
inverse sequence
\[C_1 \leftarrow C_2 \leftarrow \cdots \leftarrow C_n \leftarrow \cdots \]
Furthermore, every inverse sequence can be obtained this way.

\subsection{Tree of an inverse sequence}

Let $\underline{X}=(X_n,p_n,\mathbb{N})$ be an inverse sequence
(an inverse system with directed set $\mathbb{N}$). Consider the
union of the $X_n$ and an extra point $v$ the set of vertices of
$\Gamma_{\underline{X}}$ and the geometric edges are
$\{x_{n+1},p_n(x_{n+1})\}$ and $\{x_1,v\}$. Let
$T_{\underline{X}}=\mbox{real}(\Gamma_{\underline{X}})$ (assume
each edge with length 1), then $(T_{\underline{X}},v)$ is a rooted
simplicial tree. We therefore have an equivalence between rooted
simplicial trees and inverse sequences in \textbf{Set} category.

\section{Metrically proper maps and morphisms of inverse sequences}

\subsection{Metrically proper maps}

Let $f:(T,v)\to (T',w)$ be a rooted continuous metrically proper
map. We can induce from this map a morphism between inverse
sequences $(f_n,\Phi_f):(C_n,p_n,\mathbb{N})\to
(C'_n,p'_n,\mathbb{N})$.

Since $f$ is metrically proper, $\forall n \ \exists t_n\in
\mathbb{N}$ such that $f^{-1}(B(w,n))\subset B(v,t_n)$ and there
is no problem to assume $t_n>t_{n-1}$. Thus by
\ref{imagensubarboles}, $\forall c\in C_{t_n}$ there exists a
unique $c'\in C'_n$ such that $f(T_c)\subset T'_{c'}$. Then let
$\Phi_f(n)=t_n \ \forall n\in \mathbb{N}$ and $f_n(c)=c'$ defines
a map $f_n: C_{t_n}\to C'_n$. Obviously $p'_n\circ
f_{n+1}=f_n\circ p_{\Phi_f(n) \Phi_f(n+1)}$ and $(f_n,\Phi_f)$ is
a morphism of inverse sequences.

Another election of the $t_n$ would induce another morphism
$(f'_n,\Psi_f)$. It is immediate to see that in that case
$(f'_n,\Psi_f) \sim (f_n,\Phi_f)$. Suppose $t'_n=\Psi_f(n)\geq
t_n=\Phi_f(n)$, and let $d\in C_{t'_n}$, $c\in C_{t_n}$ with $c\in
[v,d]$ (hence $p_{t_n t'_n}(d)=c$), then there is a unique $c'\in
C'_n$ such that $f(T_d)\subset f(T_c)\subset T'_{c'}$ and clearly
$f_n\circ p_{t_n t'_n}=f'_n$. Hence, from a rooted continuous
metrically proper map $f$, we induce a unique class of morphisms
of inverse sequences $[\underline{f}]$, that is, a unique morphism
in \textbf{Tower-Set}.

\subsection{Morphisms between inverse sequences}\label{tilde}

Any morphism $(f_n,\Phi):\underline{X} \to \underline{Y}$ between
two inverse sequences induces a rooted continuous metrically
proper map between the rooted trees $(T_{\underline{X}},v)$ and
$(T_{\underline{Y}},w)$ of $\underline{X}$ and $\underline{Y}$. To
show this, first we need the following: An infinite branch of
$(T_{\underline{X}},v)$ is given by a sequence of vertices
$(x_n)_{n\in \mathbb{N}}$ with $x_n\in X_n$ and such that
$p_n(x_{n+1})=x_n \ \forall n$. A finite branch is given by a
finite sequence $(x_1,\cdots, x_m)$ such that $p_n(x_{n+1})=x_n \
\forall n<m$ and $x_m \not \in p_n(X_{m+1})$. The branches are the
realization of the graph formed by those vertices, the root $v$,
and the edges between them.

With this idea we can induce from the morphism $(f_n,\Phi)$ a
function which sends branches of $(T_{\underline{X}},v)$ to
branches of $(T_{\underline{Y}},w)$.

Given $(f_n,\Phi):\underline{X} \to \underline{Y}$ it is immediate
that,

$\exists t_1 > \Phi(1),\Phi(2)$ such that $f_1 \circ p_{\Phi(1)
t_1}=q_1\circ f_2 \circ p_{\Phi(2) t_1}$.

$\exists t_2 > t_1,\Phi(3)$ such that $f_2 \circ p_{\Phi(2)
t_2}=q_2\circ f_3 \circ p_{\Phi(3) t_2}$.

In general,
\begin{equation}\exists t_k > t_{k-1},\Phi(k+1) \emph{ such
that } f_i \circ p_{\Phi(i) t_k}=q_{ik+1}\circ f_{k+1} \circ
p_{\Phi(i+1) t_k} \ \forall i\leq k
\end{equation}

A sequence $(x_n)_{n\in \mathbb{N}}$ with $p_n(x_{n+1})=x_n$
(which represents a geodesically complete branch in
$(T_{\underline{X}},v)$) can be easily sent to
$(f_n(x_{\Phi(n)}))_{n \in \mathbb{N}}$. To see that this
represents a geodesically complete branch in
$(T_{\underline{Y}},w)$ it suffices to check that
$f_n(x_{\Phi(n)})=q_n(f_{n+1}(x_{\Phi(n+1)})) \ \forall n\in
\mathbb{N}$ and by definition of $(x_n)_{n\in \mathbb{N}}$ and
$t_n$, (1), we know that $f_n(x_{\Phi(n)})=f_n \circ p_{\Phi(n)
t_n}(x_{t_n})=q_n\circ f_{n+1} \circ p_{\Phi(n+1)
t_n}(x_{t_n})=q_n(f_{n+1}(x_{\Phi(n+1)}))$.

With the finite branches we have to be a little more careful. Let
$(x_1,\cdots, x_m)$ be the sequence of vertices associated to a
finite branch ($x_i=p_{im}(x_m)$). Let $k_0:=
\underset{t_k<m}{max}\{k\}$. Then, we can give another sequence in
the image tree
$(f_1(x_{\Phi(1)}),\cdots,f_{k_0+1}(x_{\Phi(k_0+1)}))$ which is
part of a branch of $(T_{\underline{Y}},w)$ since $t_{k_0}$ is
such that $f_i(x_{\Phi(i)})=f_i \circ p_{\Phi(i)
n}(x_n)=q_{ik_0+1}\circ f_{k_0+1} \circ p_{\Phi(i+1)
n}(x_n)=q_{ik_0+1}\circ f_{k_0+1}(x_{\Phi(k_0+1)}) \ \forall i\leq
k_0$.

Thus, for every branch $F$ of $(T_{\underline{X}},v)$ given by a
finite (or infinite) sequence of vertices $(x_i)_{i=1}^m$ (or
$(x_n)_{n\in \mathbb{N}}$), there is some branch $G$ in
$(T_{\underline{Y}},w)$ which contains the vertices
$(f_i(x_{\Phi(i)}))_{i=1}^{k_0+1}$ ($(f_n(x_{\Phi(n)}))_{n \in
\mathbb{N}}$), in particular, if $F$ is geodesically complete so
is $G$. Hence, from $(f_n,\Phi)$ we can induce this way a function
$\tilde{f}$ sending branches of $(T_{\underline{X}},v)$ to
branches of $(T_{\underline{Y}},w)$. Finally, let
$\hat{f}:(T_{\underline{X}},v) \to (T_{\underline{Y}},w)$ such
that if $t\leq t_1$ then $\hat{f}(F(t))=w$ and if $t\in
[t_k,t_{k+1}]$ then
$\hat{f}(F(t))=\tilde{f}(F)(k-1+\frac{t-t_k}{t_{k+1}-t_k})$ for
any branch $F$ of $(T_{\underline{X}},v)$. Let us see that this
map is well defined, rooted, continuous and metrically proper.

\underline{Well defined}. Consider a point of the tree with two
representatives $F(t)=G(t)$ and suppose $t\in [t_k,t_{k+1}]$.
Hence the image will be
$\tilde{f}(F)(k-1+\frac{t-t_k}{t_{k+1}-t_k})$ or
$\tilde{f}(G)(k-1+\frac{t-t_k}{t_{k+1}-t_k})$ but since $F\equiv
G$ on $[0,t_k]$, $F(i)=G(i) \ \forall i\leq t_k$. Then
$\tilde{f}(F)(i)=\tilde{f}(G)(i) \ \forall i\leq k+1$ and
$\tilde{f}(F)\equiv \tilde{f}(G)$ on $[0,k+1]$ and thus, the image
is unique.

It is obviously rooted and continuous, and clearly,
$\hat{f}^{-1}(B(w,k))\subset B(v,t_{k+1})$, and then, metrically
proper.

It is clear that the election of $t_k$ may affect to the induced
map. From another sequence $(t'_k)_{k\in \mathbb{N}}$ in the same
conditions, we will induce another map $\hat{f}'$ between the
trees but if we consider $H$ the shortest path homotopy
(\ref{short_homot}) of $\hat{f}$ to $\hat{f}'$, since
$\hat{f}(F(t_k))=\hat{f'}(F(t'_k))=G(k-1)$, $H(T\backslash
B(v,max\{t_k,t'_k\}))\subset T'\backslash B(w,k-1)$ which is
equivalent to $H^{-1}(B(w,k-1))\subset B(v,max\{t_k,t'_k\})\times
I$. Hence, there is a metrically proper homotopy between the
induced maps $\hat{f}$, $\hat{f}'$ and from a morphism in
\textbf{Tower-Set} we induce a unique metrically proper homotopy
class $[\hat{f}]_{mp}$ of rooted continuous metrically proper maps
between the trees.

\begin{prop2}\label{lip} The map $\hat{f}$ is non-expasive (Lipschitz of constant 1).
\end{prop2}

\begin{proof} If $x,x'$ are in the same branch $x=F(t),x'=F(t')$
then it is clear that $d(x,x')\geq d(\hat{f}(x),\hat{f}(x'))$
since intervals with length $t_{n+1}-t_n\geq 1$ are sent linearly
to intervals of length 1.

If $x,x'$ are not in the same branch $x=F(t),y=G(t')$ then let
$t_0=sup \{t| F(t)= G(t)\}$ and $y=F(t_0)=G(t_0)$.
$d(x,x')=d(x,y)+d(y,x')\geq
d(\hat{f}(x),\hat{f}(y))+d(\hat{f}(y),\hat{f}(x'))\geq
d(\hat{f}(x),\hat{f}(x'))$. \end{proof}

Since $\hat{f}$ is metrically proper and non-expansive it is
obvious that

\begin{cor2} The map $\hat{f}$ is coarse.
\end{cor2}

\section{The functors}

Remember that \textbf{Tower-Set} is the category of inverse
sequences in \textbf{Set} category with equivalence classes of
morphisms of sequences, and let $\mathcal{T^*}$ be the category of
rooted simplicial trees and metrically proper homotopy classes of
metrically proper maps between trees.

\begin{definicion}\label{xi} Let $\xi:$ \textbf{Tower-Set} $\to \mathcal{T^*}$ be such that
$\xi(\underline{X})=T_{\underline{X}}$ for any inverse sequence
and $\xi(\underline{f})=[\hat{f}]_{mp}$ for any morphism of
sequences.
\end{definicion}

\begin{prop} $\xi$ is a functor.
\end{prop}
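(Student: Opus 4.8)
To show $\xi$ is a functor I need to verify three things: that $\xi$ is well defined on objects and morphisms, that it preserves identities, and that it preserves composition. The object part is essentially immediate: the construction $\underline{X}\mapsto T_{\underline{X}}$ in Section 3.2 assigns to each inverse sequence a rooted simplicial tree, so $\xi(\underline{X})$ is an object of $\mathcal{T}^*$. For the morphism part, I first need to check that $\xi$ is well defined on equivalence classes, i.e.\ that if $(f_n,\Phi)\sim(g_n,\Psi)$ then $[\hat f]_{mp}=[\hat g]_{mp}$; the paper has already shown (end of Section 4.2) that a \emph{single} morphism of sequences gives a well-defined metrically proper homotopy class $[\hat f]_{mp}$ independent of the choice of the auxiliary sequence $(t_k)$, so what remains is to compare the maps induced by two \emph{equivalent} morphisms of sequences. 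Here I would use the relation $f_n\circ p_{\Phi(n)m}=g_n\circ p_{\Psi(n)m}$ for suitable $m$: it forces $\tilde f(F)$ and $\tilde g(F)$ to agree on an initial segment $[0,n+1]$ whose length grows with $n$, so the shortest-path homotopy $H$ of Definition~\ref{short_homot} between $\hat f$ and $\hat g$ satisfies $H^{-1}(B(w,k))\subset B(v,N_k)\times I$ for an appropriate $N_k$, hence is a rooted metrically proper homotopy. This is the same style of estimate already used in the text for the independence of the $(t_k)$, just with the two maps coming from different but equivalent representatives.

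Next, preservation of identities. The identity morphism on $\underline{X}=(X_n,p_n)$ in \textbf{Tower-Set} is represented by $(\mathrm{id}_{X_n},\mathrm{id}_{\mathbb N})$. Running the construction of Section 4.2 on this morphism, one may take $t_k=k+1$ (or any valid choice), and then $\tilde f$ sends the branch $(x_n)$ of $T_{\underline X}$ to the branch $(x_n)$ itself, so $\hat f$ is a map homotopic to $\mathrm{id}_{T_{\underline X}}$ via the shortest-path homotopy — indeed the "reparametrization" $t\in[t_k,t_{k+1}]\mapsto k-1+\frac{t-t_k}{t_{k+1}-t_k}$ is a metrically proper self-homotopy of the identity of the tree. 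So $\xi(\mathrm{id}_{\underline X})=[\mathrm{id}_{T_{\underline X}}]_{mp}$, which is the identity of $T_{\underline X}$ in $\mathcal T^*$.

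Finally, preservation of composition, which I expect to be the main obstacle. Given $(f_n,\Phi):\underline X\to\underline Y$ and $(g_n,\Psi):\underline Y\to\underline Z$, their composite in \textbf{Tower-Set} is $(g_n\circ f_{\Psi(n)},\Phi\circ\Psi)$, and I must show $\widehat{g\circ f}\simeq_{mp}\hat g\circ\hat f$. The clean way is to argue at the level of the branch-maps $\tilde{\ }$: directly from the defining identities $(1)$ for $f$, $g$, and $g\circ f$, one checks that $\widetilde{g\circ f}(F)$ and $\widetilde g(\widetilde f(F))$ are the \emph{same} branch of $T_{\underline Z}$ for every branch $F$ of $T_{\underline X}$ (both have $n$-th vertex $g_n(f_{\Psi(n)}(x_{\Phi(\Psi(n))}))$, using $p_n(x_{n+1})=x_n$). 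The subtlety is purely in the parametrizations: $\widehat{g\circ f}$ rescales intervals $[t_k^{gf},t_{k+1}^{gf}]$ to unit length in one step, whereas $\hat g\circ\hat f$ first rescales intervals $[t_k^f,t_{k+1}^f]$ of $T_{\underline X}$ to unit intervals of $T_{\underline Y}$ and then rescales \emph{those} by the $t^g$-cutoffs, producing in general a different, coarser reparametrization of the same image branch. Since both maps have image in the same branch $\tilde{\ }$, the shortest-path homotopy between them stays inside that branch, and one checks it is metrically proper by the usual estimate: on the complement of a large ball in $T_{\underline X}$ both maps land in the complement of a large ball in $T_{\underline Z}$, with the radii related through the (finitely many relevant) cutoff indices $t_k^f, t_k^g, t_k^{gf}$. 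Hence $\widehat{g\circ f}\simeq_{mp}\hat g\circ\hat f$, i.e.\ $\xi(\underline g\circ\underline f)=\xi(\underline g)\circ\xi(\underline f)$, completing the verification that $\xi$ is a functor.
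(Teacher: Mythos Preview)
Your plan is correct and follows essentially the same route as the paper: well-definedness via the shortest-path homotopy using $f_n\circ p_{\Phi(n)m_n}=g_n\circ p_{\Psi(n)m_n}$ to force agreement of $\tilde f(F)$ and $\tilde g(F)$ on growing initial segments, the identity via $t_k=k+1$ giving a shift $F(t)\mapsto F(t-2)$ metrically properly homotopic to $\mathrm{id}$, and composition by observing that $\widehat{g\circ f}$ and $\hat g\circ\hat f$ land in the same target branch with only a reparametrization discrepancy. The one place the paper is more explicit than your sketch is the composition step: rather than appealing abstractly to ``finitely many relevant cutoff indices'', it fixes the auxiliary sequence for $g\circ f$ as $t_n=r_{s_{n+1}+1}$ (with $(s_n)$ for $f$ and $(r_n)$ for $g$), which forces \emph{both} $\widehat{g\circ f}$ and $\hat g\circ\hat f$ to send each interval $[t_k,t_{k+1}]$ into $G([k-1,k])$ and to agree at the endpoints, making the metric-properness of the shortest-path homotopy immediate.
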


\begin{proof} $\xi$ is \underline{well defined}. If $\underline{f}\sim
\underline{g}$, then $\xi(\underline{f})\simeq_{mp}
\xi(\underline{g})$. Suppose $(f,\Phi)\sim (g,\Psi)$. Then
$\forall n \ \exists m_n>\Phi(n),\Psi(n)$ such that $f_n\circ
p_{\Phi(n)m_n} = g_n\circ p_{\Psi(n)m_n}$. We can assume
$m_n>t_n(\Phi),t'_n(\Psi),m_{n-1}$.

For any sequence $\underline{x}=(x_1,\cdots,x_{m_n})$ with
$p_{im_n}(x_{m_n})=x_i \ \forall i<m_n$, the sequences
$(f_1(x_{\Phi(1)}),\cdots,f_n(x_{\Phi(n)}))\subset
\underline{f}(\underline{x})$ and
$(g_1(x_{\Psi(1)}),\cdots,g_n(x_{\Psi(n)}))\subset
\underline{g}(\underline{x})$ are such that
$f_n(x_{\Phi(n)})=f_n(p_{\Phi(n)m_n}(x_{m_n}))=g_n(p_{\Psi(n)m_n}(x_{m_n}))=g_n(x_{\Psi(n)})$.
Hence, for any branch $F$ such that $F(i)=x_i \ \forall i\leq
m_n$, then
$\tilde{f}(F)(i)=f_i(x_{\Phi(i)})=g_i(x_{\Phi(i)})=\tilde{g}(F)(i)
\ \forall i\leq n$ and $\tilde{f}(F)\equiv \tilde{g}(F)$ on
$[0,n]$.

Thus, and since $\forall t>m_n \
\hat{f}(F(t)),\hat{g}(F(t))\subset T_{\underline{Y}}\backslash
B(w,n)$, if we consider the shortest path homotopy
$H:T_{\underline{X}}\times I \to T_{\underline{Y}}$ of $\hat{f}$
to $\hat{g}$, it is immediate to see that $\forall n\in
\mathbb{N}$ \ $H_t(T\backslash B(v,m_n))\subset T'\backslash
B(w,n)$ $\forall t$, which is equivalent to say that
$H_t^{-1}(B(w,n))\subset B(v,m_n)$ $\forall t$, and hence, $H$ is
a metrically proper homotopy.

\underline{$\xi(id_{\emph{\textbf{Tower-Set}}})=id_\mathcal{T^*}$}.
If we consider the representative of the identity which is a level
morphism and the identity at each level, the induced morphism
between the trees if we assume $t_k=\Phi(k+1)=k+1$ sends each
point $F(t)$, with $F$ any branch of $T^*$ and $t\leq 2$, to $w$
and $F(t)$ with $t>2$ to $F(t-2)$. Clearly, there is a metrically
proper homotopy of the identity to this contraction.

\underline{$\xi(g\circ f)=\xi(g)\circ \xi(f)$}. Let
$(f,\Phi):\underline{X}\to \underline{Y}$ and
$(g,\Psi):\underline{Y}\to \underline{Z}$ two morphisms between
inverse sequences. First consider $\aleph =\Phi \circ \Psi$ and
$h=g\circ f$. To construct $\xi(f)$ and $\xi(g)$ we define the
sequences $(s_n)_{n\in \mathbb{N}}$ and $(r_n)_{n\in \mathbb{N}}$
respectively, satisfying condition (1). For $\xi(g\circ f)$, we
define this sequence $(t_n)_{n\in \mathbb{N}}$ to be
$t_n=r_{s_{n+1}+1}$ (note that (1) would be satisfied in $(g\circ
f,\aleph)$ for any $t_n\geq r_{s_n}$). Then, any branch $F$ given
by a sequence of vertices $(x_1,\cdots,x_n)$ with $t_{k}\leq n
\leq t_{k+1}$ is sent to a branch $G$ whose $k+1$ first vertices
are $(w,h_1(x_{\aleph(1)}),\cdots,h_k(x_{\aleph(k)}))$, and if
$t\in [t_{k},t_{k+1}]$ then $\hat{h}(F(t))\in G[k-1,k]$. If we
consider $\xi(g)\circ \xi(f)$ then we can assume that the branch
$F$ is sent to the same branch $G$, note that the first $k+1$
vertices of $G$ are
$(w,g_1(f_{\Psi(1)}(x_{\Phi(\Psi(1))})),\cdots,g_k(f_{\Psi(k)}(x_{\Phi(\Psi(k))})))$,
and also $\forall t\in
[r_{s_{k+1}+1},r_{s_{k+2}+1}]=[t_k,t_{k+1}]$ then
$\hat{g}(\hat{f}(F(t)))\subset G[k-1,k]$. Hence, the induced map
$\hat{h}$ doesn't need to coincide exactly with $\hat{g}\circ
\hat{f}$, but both send intervals $[t_k,t_{k+1}]$ to intervals
$[k-1,k]$ and coincide on the vertices at levels $t_k$ all because
of the election of $(t_n)_{n\in \mathbb{N}}$. This obviously
implies the existence of a metrically proper homotopy between them
and thus $\xi(g\circ f)=\xi(g)\circ \xi(f)$. \end{proof}

\begin{definicion} Let $\eta:\mathcal{T^*} \to$ \textbf{Tower-Set} be such that for any
rooted tree $(T,v)$, $\eta(T,v)=(C_n,p_n,\mathbb{N})$ and for any
rooted continuous metrically proper map $f$,
$\eta(f)=\underline{f}$ the equivalence class of $(f_n,\Phi_f)$.
\end{definicion}

\begin{prop} $\eta$ is a functor.
\end{prop}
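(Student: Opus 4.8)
The goal is to verify the functoriality of $\eta:\mathcal{T^*}\to\textbf{Tower-Set}$, so the plan is to check, in order: that $\eta$ is well defined on morphisms (homotopic metrically proper maps induce equivalent morphisms of sequences), that $\eta$ preserves identities, and that $\eta$ preserves composition. The construction of $\underline{f}=(f_n,\Phi_f)$ from a rooted continuous metrically proper map $f$ was already carried out in the subsection on metrically proper maps, where it was also shown that a different choice of the thresholds $t_n$ with $f^{-1}(B(w,n))\subset B(v,t_n)$ yields a $\sim$-equivalent morphism; so the only remaining issue for well-definedness is independence of the homotopy class of $f$.

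\textbf{Well defined.} First I would take $f\simeq_{mp} g$ via a rooted metrically proper homotopy $H:T\times I\to T'$, and show $\underline{f}\sim\underline{g}$. Fix $n$. Since $H$ is metrically proper there is $N$ with $H^{-1}(B(w,n))\subset B(v,N)\times I$; enlarging $N$ we may assume $N\ge t_n(\text{for }f),\ t_n(\text{for }g)$ and $N\in\mathbb N$. For $c\in C_N$ the connected set $H(T_c\times I)$ misses $B(w,n)$, hence lies in a single component $T'_{c'}$ of $T'\setminus B(w,n)$ with $c'\in C'_n$; moreover $f(T_c)\subset H(T_c\times I)\subset T'_{c'}$ and likewise $g(T_c)\subset T'_{c'}$, so $f_n(c)=c'=g_n(c)$ after composing down with $p_{\Phi_f(n)N}$ and $p_{\Phi_g(n)N}$. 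This is exactly the condition $f_n\circ p_{\Phi_f(n)N}=g_n\circ p_{\Phi_g(n)N}$ for $N\ge\Phi_f(n),\Phi_g(n)$, i.e. $(f_n,\Phi_f)\sim(g_n,\Phi_g)$, so $\eta(f)=\eta(g)$ in \textbf{Tower-Set}.

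\textbf{Identity and composition.} For the identity $\mathrm{id}_{(T,v)}$ I would simply note that we may take $t_n=n$, so $\Phi_{\mathrm{id}}(n)=n$ and $(\mathrm{id})_n=\mathrm{id}_{C_n}$, which represents $\mathrm{id}_{(C_n,p_n)}$. For composition, let $f:(T,v)\to(T',w)$ and $g:(T',w)\to(T'',u)$ be rooted continuous metrically proper maps. Choose thresholds $r_n$ for $g$ (so $g^{-1}(B(u,n))\subset B(w,r_n)$) and $s_n$ for $f$ (so $f^{-1}(B(w,n))\subset B(v,s_n)$), and set $t_n=s_{r_n}$; then $(g\circ f)^{-1}(B(u,n))\subset f^{-1}(B(w,r_n))\subset B(v,s_{r_n})=B(v,t_n)$, so $t_n$ is an admissible threshold for $g\circ f$ and $\Phi_{g\circ f}=\Phi_g\circ\Phi_f$ with this choice. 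Now for $c\in C_{t_n}$, Proposition \ref{imagensubarboles} applied to $f$ gives a unique $c'\in C'_{r_n}$ with $f(T_c)\subset T'_{c'}$, namely $c'=f_{r_n}(c)$; applying it to $g$ gives a unique $c''\in C''_n$ with $g(T'_{c'})\subset T''_{c''}$, namely $c''=g_n(c')$; hence $(g\circ f)(T_c)\subset T''_{c''}$, which by uniqueness says $(g\circ f)_n(c)=g_n(f_{r_n}(c))=g_n\circ f_{r_n}(c)$. Since $(g_n,\Phi_g)\circ(f_n,\Phi_f)$ is precisely the morphism $n\mapsto g_n\circ f_{\Phi_g(n)}=g_n\circ f_{r_n}$, this shows $(g\circ f)_n=g_n\circ f_{\Phi_g(n)}$ on $C_{t_n}$, i.e. $\eta(g\circ f)\sim\eta(g)\circ\eta(f)$, and $\eta$ is a functor.

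\textbf{Main obstacle.} The routine part is keeping the indices straight; the only genuine point requiring care is the well-definedness step, where one must argue that the homotopy $H$ forces $f$ and $g$ to send a given subtree $T_c$ into the same component at infinity. The key is that $H(T_c\times I)$ is connected and avoids $B(w,n)$ by metric properness of $H$, so it sits in one component of $T'\setminus B(w,n)$; this is the tree analogue of the fact that a path connecting two points outside a ball, while staying outside the ball, keeps them in the same component. Everything else reduces to the uniqueness clause of Proposition \ref{imagensubarboles} together with the bookkeeping already done for $\xi$.
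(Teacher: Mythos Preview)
Your proof is correct and follows essentially the same approach as the paper's: both verify well-definedness via the metric properness of the homotopy, observe the identity case is immediate, and handle composition by choosing compatible thresholds $t_n=s_{r_n}$ so that $\Phi_{g\circ f}=\Phi_f\circ\Phi_g$. Your treatment is in fact a bit more explicit than the paper's---where the paper simply asserts that $f_n\circ p_{\Phi_f(n)k}=f'_n\circ p_{\Phi_{f'}(n)k}$ ``clearly'' once $H^{-1}(B(w,n))\subset B(v,m)\times I$, you spell out the reason (connectedness of $H(T_c\times I)$ forces the image into a single component $T'_{c'}$), which is exactly the point.
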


\begin{proof} $\eta$ is \underline{well defined}. If $f\simeq_{mp}f'$ then
$(f_n,\Phi_f)\sim (f'_n,\Phi_{f'})$. Let $H:T\times I \to T'$ be a
rooted metrically proper homotopy of $f$ to $f'$. Then $\forall n
\ \exists m$ such that $H^{-1}(B(w,n))\subset B(v,m)\times I$ and
clearly, $\forall k>m,\Phi_f(n),\Phi_{f'}(n)$ $f_n\circ
p_{\Phi_f(n)k}=f'_n\circ p_{\Phi_{f'}(n)k}$ and hence,
$(f_n,\Phi_f)\sim (f'_n,\Phi_{f'})$.

It is immediate to see that
\underline{$\eta(id_\mathcal{T^*})=id_{\emph{\textbf{Tower-Set}}}$}.

\underline{$\eta(g\circ f)=\eta(g)\circ \eta(f)$}. Consider
$f:(T,u)\to (T',v)$ and $g:(T',v)\to (T'',w)$. Let $(s_n)_{n\in
\mathbb{N}}$ be an increasing sequence of integers such that
$g^{-1}(B(w,n))\subset B(v,s_n)$. Let $(r_n)_{n\in \mathbb{N}}$ an
increasing sequence of integers such that $f^{-1}(B(v,n))\subset
B(v,r_n)$. We can now define the sequence $(t_n)_{n\in
\mathbb{N}}$ such that $(g \circ f)^{-1}B(w,n)\subset B(u,t_n)$ as
$t_n=r_{s_n}$. Hence $\Phi_{g\circ f}=\Phi_{g}\circ \Phi_{f}$ and
$(g\circ f)_n=g_{t_n}\circ f_n$ and thus $\eta(g\circ
f)=\eta(g)\circ \eta(f)$. \end{proof}

\section{Equivalence of categories}

Recall the following lemma in \cite{McL}:

\begin{lema} Let $S:A \rightarrow C$ be a functor between two categories.
$S$ is an equivalence of categories if and only if is $full,\
faithful$ and each object $c\in C$ is isomorphic to $S(a)$ for
some object $a\in A$.
\end{lema}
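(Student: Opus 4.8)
The plan is to prove the two implications separately; in both directions everything reduces, via faithfulness of $S$, to short diagram chases. Write $S\colon A\to C$. For the direction ($\Rightarrow$), I would assume $S$ is an equivalence, so that there are a functor $R\colon C\to A$ and natural isomorphisms $\eta\colon 1_A\Rightarrow RS$ and $\varepsilon\colon SR\Rightarrow 1_C$. Essential surjectivity is then immediate, since $\varepsilon_c\colon SR(c)\to c$ exhibits $c\cong S(R(c))$. For faithfulness I would take parallel arrows $f,g\colon a\to a'$ with $S(f)=S(g)$, apply $R$, and use naturality of $\eta$ to obtain $\eta_{a'}\circ f=RS(f)\circ\eta_a=RS(g)\circ\eta_a=\eta_{a'}\circ g$; cancelling the isomorphism $\eta_{a'}$ gives $f=g$. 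The symmetric argument with $R,\varepsilon$ in place of $S,\eta$ shows $R$ is faithful too. For fullness, given $h\colon S(a)\to S(a')$ I would set $f:=\eta_{a'}^{-1}\circ R(h)\circ\eta_a$; naturality of $\eta$ gives $RS(f)=\eta_{a'}\circ f\circ\eta_a^{-1}=R(h)$, and faithfulness of $R$ then yields $S(f)=h$.

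For the direction ($\Leftarrow$), I would assume $S$ full, faithful and essentially surjective, and build a quasi-inverse. Using the axiom of choice, for each $c\in C$ pick an object $R(c)\in A$ together with an isomorphism $\varepsilon_c\colon S(R(c))\to c$. Given an arrow $h\colon c\to c'$, the composite $\varepsilon_{c'}^{-1}\circ h\circ\varepsilon_c\colon S(R(c))\to S(R(c'))$ equals $S$ of a unique arrow $R(c)\to R(c')$, which I call $R(h)$; faithfulness of $S$ then forces $R(1_c)=1_{R(c)}$ and $R(h'\circ h)=R(h')\circ R(h)$, so $R$ is a functor, and the defining relation $\varepsilon_{c'}\circ SR(h)=h\circ\varepsilon_c$ is exactly the naturality of $\varepsilon=(\varepsilon_c)_c$. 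To produce the unit, for $a\in A$ let $\eta_a\colon a\to RS(a)$ be the unique arrow with $S(\eta_a)=\varepsilon_{S(a)}^{-1}$; since a full faithful functor reflects isomorphisms, each $\eta_a$ is an isomorphism, and its naturality again reduces, via faithfulness of $S$, to the naturality of $\varepsilon$. Thus $R$ is a quasi-inverse of $S$ and $S$ is an equivalence.

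I do not expect a genuine obstacle in this argument; it is the standard characterisation (as in \cite{McL}). The points that need a little care are the appeal to choice in defining $R$ on objects, the use of the uniqueness half of full faithfulness to make $R$ functorial and the natural transformations well defined, and the remark that full faithful functors reflect isomorphisms, which is precisely what upgrades $\eta$ from a natural transformation to a natural \emph{isomorphism}. Accordingly I would keep the verifications of the naturality squares brief, since each collapses under faithfulness of $S$ to an identity that holds by construction.
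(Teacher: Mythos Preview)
Your argument is correct and is exactly the standard proof of this characterisation. However, the paper does not give its own proof of this lemma at all: it is introduced with ``Recall the following lemma in \cite{McL}'' and then used as a black box in the proof of Theorem~\ref{equiv categ}, so there is no proof in the paper to compare against.
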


\begin{teorema}\label{equiv categ} $\eta$ is an equivalence of categories.
\end{teorema}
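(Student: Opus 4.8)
The plan is to apply the cited lemma from \cite{McL}: it suffices to show that $\eta:\mathcal{T^*}\to\textbf{Tower-Set}$ is full, faithful, and essentially surjective. Essential surjectivity is the easiest part and is essentially already recorded in Section 3.2: given an inverse sequence $\underline{X}$, the tree $T_{\underline{X}}$ has $\eta(T_{\underline{X}})=(C_n,p_n,\mathbb{N})$, and one checks this recovers $\underline{X}$ up to isomorphism in \textbf{Tower-Set} (the vertices at level $n$ are exactly $X_n$ and the bonding maps are the $p_n$). So every object of \textbf{Tower-Set} is isomorphic — in fact equal — to $\eta$ of some object.

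For fullness, I would start from a morphism $\underline{f}:\eta(T,v)\to\eta(T',w)$ in \textbf{Tower-Set}, represented by $(f_n,\Phi)$, and produce a rooted continuous metrically proper map $g:(T,v)\to(T',w)$ with $\eta(g)=\underline{f}$. The natural candidate is $g=\hat f$, the map constructed in Section \ref{tilde} from the morphism $(f_n,\Phi)$ between the inverse sequences $(C_n,p_n)$ and $(C'_n,p'_n)$; by Proposition \ref{lip} and the subsequent corollary it is non-expansive, coarse, and (as checked there) rooted continuous and metrically proper, hence a morphism of $\mathcal{T^*}$. The point then is to verify $\eta(\hat f)=\underline{f}$: computing $\eta(\hat f)$ means choosing $t_n\in\mathbb{N}$ with $\hat f^{-1}(B(w,n))\subset B(v,t_n)$ and reading off $\hat f_n:C_{t_n}\to C'_n$ via Proposition \ref{imagensubarboles}; by the explicit definition of $\hat f$ on branches, $\hat f(T_c)\subset T'_{c'}$ precisely when $c'$ is the appropriate $f_n$-image, so $(\hat f_n,\Phi_{\hat f})\sim (f_n,\Phi)$. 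One may alternatively invoke the functor $\xi$ of Definition \ref{xi} together with $\eta\circ\xi\simeq\mathrm{id}$; but since $\mathcal{T^*}$ here is the category of \emph{simplicial} trees while $\xi$ lands among simplicial trees and $\eta$ is defined on all rooted trees, I would keep the argument internal to avoid circularity.

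For faithfulness, suppose $f,f':(T,v)\to(T',w)$ are rooted continuous metrically proper maps with $\eta(f)=\eta(f')$, i.e. $(f_n,\Phi_f)\sim(f'_n,\Phi_{f'})$; I must show $f\simeq_{mp}f'$. The natural homotopy is the shortest path homotopy $H$ of Definition \ref{short_homot}. The content is that $H$ is metrically proper. From $(f_n,\Phi_f)\sim(f'_n,\Phi_{f'})$ we get, for each $n$, an $m_n$ with $f_n\circ p_{\Phi_f(n)m_n}=f'_n\circ p_{\Phi_{f'}(n)m_n}$, which says $f$ and $f'$ send the subtree $T_c$ (for $c\in C_{m_n}$) into the same $T'_{c'}$ with $c'\in C'_n$; since any shortest path in $T'$ between a point of $T'_{c'}$ and another point of $T'_{c'}$ stays in $T'_{c'}\supset$... more carefully, in $T'\setminus B(w, n-1)$, we get $H_t(T\setminus B(v,m_n))\subset T'\setminus B(w,n-1)$ for all $t$, equivalently $H_t^{-1}(B(w,n-1))\subset B(v,m_n)$, so $H$ is metrically proper. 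This is exactly the estimate already carried out for $\xi$ in the proof that $\xi$ is well defined, transplanted to the present setting.

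\textbf{The main obstacle} I expect is the fullness step — not the construction of $\hat f$, which is in hand, but the bookkeeping needed to confirm that $\eta(\hat f)$ and $\underline{f}$ are $\sim$-equivalent as morphisms of sequences, because the indices $t_n$ used to define $\hat f$ and those used (a posteriori) to read off $\eta(\hat f)$ need not agree, and one must chase Proposition \ref{imagensubarboles} through both choices. The remedy is the same as in Section 4.1: any two admissible index sequences yield $\sim$-equivalent morphisms, so it is enough to exhibit \emph{one} choice for which the computation is transparent, namely the very sequence $(t_k)$ used in building $\hat f$. Once that is set up, faithfulness and essential surjectivity are routine, and the cited lemma finishes the proof.
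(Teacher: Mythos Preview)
Your proposal is correct and follows essentially the same three-step strategy (full, faithful, essentially surjective) as the paper. The one noteworthy difference is in the faithfulness step: the paper proves it indirectly by showing $\xi\circ\eta(f)\simeq_{mp} f$ for every rooted continuous metrically proper $f$ (so if $\eta(f)=\eta(g)$ then $f\simeq_{mp}\xi\eta(f)=\xi\eta(g)\simeq_{mp} g$), whereas you argue directly that the shortest path homotopy between $f$ and $f'$ is metrically proper using the relation $(f_n,\Phi_f)\sim(f'_n,\Phi_{f'})$. Your direct route is valid --- the key observation that $f(T_c)$ and $f'(T_c)$ land in the same $T'_{c'}$ for $c\in C_{m_n}$ follows immediately from the definition of $\eta$ and the $\sim$-relation --- and is arguably cleaner; the paper's route has the side benefit of recording the natural isomorphism $\xi\circ\eta\simeq\mathrm{id}_{\mathcal{T^*}}$, which is of independent interest. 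Your worry about ``circularity'' in using $\xi$ for fullness is unfounded: $\xi$ and $\eta$ are both defined between \textbf{Tower-Set} and the simplicial-tree category $\mathcal{T^*}$, and the paper in fact writes $\hat f=\xi((f_n,\Phi))$ explicitly in its fullness argument, choosing a representative with $q_n\circ f_{n+1}=f_n\circ p_{\Phi(n)\Phi(n+1)}$ so that one may take $t_n=\Phi(n+1)$ and compute $\eta(\hat f)$ transparently.
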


\begin{proof} \underline{$\eta$ is full}. Let $\underline{f}$ be a class of
morphisms in \textbf{Tower-Set}. Consider the representative
$(f_n,\Phi)$ such that $q_n\circ f_{n+1}=f_n\circ
p_{\Phi(n)\Phi(n+1)}$. This allows us, in the construction of
$\xi((f_n,\Phi))$, to assume $t_n=\Phi(n+1)$. Hence the map
$\hat{f}=\xi((f_n,\Phi))$ between the trees would be
$\hat{f}(F(t))=w$ if $t\leq \Phi(2)$ and
$\hat{f}(F(t))=\tilde{f}(F)(n-1+\frac{t-\Phi(n+1)}{\Phi(n+2)-\Phi(n+1)})$
if $t\in [\Phi(n+1),\Phi(n+2)]$, where $\tilde{f}$ is the induced
map between the branches as in \ref{tilde}. It suffices to check
that $\eta(\hat{f})=(f'_n,\Psi)\sim (f_n,\Phi)$. Clearly,
$\Psi(n)=\Phi(n+2)$ and if we assume in the construction of
$\eta(\hat{f})$ that $t'_n=\Psi(n)$, then $f'_n=q_{nn+2}\circ
f_{n+2}=f_n\circ p_{\Phi(n)\Phi(n+2)}$, and obviously,
$(f'_n,\Psi)\sim (f_n,\Phi)$.

\underline{$\eta$ is faithful}. If $\eta(f)\sim \eta(g)$ then
$f\simeq_{mp} g$. This is an immediate consequence if we see that
for any rooted continuous metrically proper map $f:(T,v)\to
(T',w)$, $\xi \circ \eta(f)\simeq_{mp} f$. $\xi \circ
\eta(f):=\hat{f}$ is a rooted continuous metrically proper map and
let $H$ be the shortest path homotopy of $f$ to $\hat{f}$. Let
$\eta(f):=(f_n,\Phi)$ where $\Phi(n)=t_n$ and $f_n:C_{t_n}\to
C'_{n}$ are defined as in section 4. If $\tilde{f}$ is the induced
map between the branches (which we can assume to be the same for
$f$ and $\hat{f}$ since for any branch $F$ of $(T,v)$,
$\hat{f}(F)\subset f(F)$), the map
$\xi(\eta(f))=\xi((f_n,\Phi))=\hat{f}$ sends $F(t)$ to $w$ if
$t\leq \Phi(2)$ and if $t\in [\Phi(n),\Phi(n+1)]$, with $n\geq 2$,
$\hat{f}(F(t))=\tilde{f}(F)(n-2+\frac{t-\Phi(n)}{\Phi(n+1)-\Phi(n)})$.
It is clear, because of the election of $t_n=\Phi(n)$, that also
$f(F(\Phi(n)))\subset T'_{\tilde{f}(F)(n-2)}$ for $t\in
[\Phi(n),\Phi(n+1)]$ and hence, the shortest path between
$f(F(t))$ and $\hat{f}(F(t))$ is contained in
$T'_{\tilde{f}(F)(n-2)}$. Then $H^{-1}(B(w,n-2))\subset
B(v,\Phi(n))\ \forall n\in \mathbb{N}$ and $H$ is metrically
proper.

Finally, for every inverse sequence
$\underline{X}=(X_n,p_n,\mathbb{N})$ it is immediate that
$T_{\underline{X}}$ is such that $C_n=\partial B(v,n)=X_n$ and
$\eta(T_{\underline{X}})=\underline{X}$. \end{proof}

By \ref{cor_lip} and \ref{cor_coar} (also see \ref{lip}) we obtain
the following corollaries.

\begin{cor} There is an equivalence of categories between \textbf{Tower-Set} and the
category of rooted simplicial trees with rooted metrically proper
non-expansive homotopy classes of rooted metrically proper
non-expansive maps.
\end{cor}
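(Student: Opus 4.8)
The plan is to upgrade Theorem \ref{equiv categ} along the comparison functor that forgets the distinction between non-expansive and merely metrically proper maps, in parallel with the passage from Theorem \ref{tma equiv} to Corollary \ref{cor_lip} in the geodesically complete setting (the results of \cite{M-M} such as Corollary \ref{cor_lip} do not apply directly here, since simplicial trees need not be geodesically complete). Denote by $\mathcal{T}^{*}_{L}$ the category whose objects are rooted simplicial trees and whose morphisms are $\simeq_{L}$-classes of rooted metrically proper non-expansive maps, and let $\iota:\mathcal{T}^{*}_{L}\to\mathcal{T}^{*}$ be the functor that is the identity on objects and sends $[f]_{L}$ to $[f]_{mp}$; it is well defined and functorial, since every non-expansive map is metrically proper and a homotopy non-expansive at each level is in particular a metrically proper homotopy. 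I would prove that $\iota$ is an equivalence of categories; then $\eta\circ\iota:\mathcal{T}^{*}_{L}\to\textbf{Tower-Set}$ is the desired equivalence. (With the same content one may instead check directly that $\xi$ takes values in $\mathcal{T}^{*}_{L}$ by Proposition \ref{lip} and that, together with the restriction of $\eta$, it satisfies the hypotheses of the lemma recalled from \cite{McL}.)

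The single new ingredient I would isolate first is that \emph{a shortest path homotopy} (Definition \ref{short_homot}) \emph{between two rooted non-expansive maps into a tree is non-expansive at each level}. This reduces to the convexity estimate that, for points $a,b,a',b'$ of a tree and $\gamma,\gamma'$ the constant-speed parametrizations on $I$ of $[a,b]$ and $[a',b']$, one has $d(\gamma(t),\gamma'(t))\le(1-t)\,d(a,a')+t\,d(b,b')$ for all $t\in I$. For trees this is an elementary case analysis: by the triangle inequality one reduces to the case of a common endpoint, and then, comparing the arclength positions of $\gamma(t)$ and $\gamma'(t)$ relative to the median of the three relevant points (whether each lies on the common initial segment or on one of the two prongs) and using $t\le 1$, every case is bounded by $t\,d(b,b')$ (respectively $(1-t)\,d(a,a')$). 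Taking $a=f(x),\,b=g(x),\,a'=f(x'),\,b'=g(x')$ with $f,g$ non-expansive then gives $d(H_{t}(x),H_{t}(x'))\le d(x,x')$ for the shortest path homotopy $H$ of $f$ to $g$.

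With this in hand the rest is bookkeeping. The functor $\iota$ is essentially surjective by construction. For fullness: given $[f]_{mp}$ between simplicial trees, the faithfulness step in the proof of Theorem \ref{equiv categ} already yields $f\simeq_{mp}\hat f$, where $\hat f=\xi(\eta(f))$ is non-expansive by Proposition \ref{lip}; hence $[\hat f]_{L}$ is a morphism of $\mathcal{T}^{*}_{L}$ with $\iota([\hat f]_{L})=[f]_{mp}$. For faithfulness: let $f,g$ be rooted non-expansive maps between simplicial trees with $f\simeq_{mp}g$. The homotopies witnessing $f\simeq_{mp}\hat f$ and $g\simeq_{mp}\hat g$ in that proof are shortest path homotopies, and since $f,g,\hat f,\hat g$ are all non-expansive they are non-expansive at each level by the isolated claim, so $f\simeq_{L}\hat f$ and $g\simeq_{L}\hat g$. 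Moreover $f\simeq_{mp}g$ forces $\eta(f)\sim\eta(g)$ (well-definedness of $\eta$), and the well-definedness argument for $\xi$ then exhibits a shortest path homotopy of $\hat f$ to $\hat g$, once more non-expansive at each level; hence $\hat f\simeq_{L}\hat g$, and by transitivity of $\simeq_{L}$ we obtain $f\simeq_{L}g$. By the lemma recalled from \cite{McL}, $\iota$ is an equivalence, and therefore so is $\eta\circ\iota$.

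The main obstacle is precisely the isolated convexity claim about shortest path homotopies; granting it, every homotopy occurring in Section 5 and in the proof of Theorem \ref{equiv categ} can be taken to be a shortest path homotopy between maps that are non-expansive --- either genuine non-expansive maps or maps of the form $\hat f$ covered by Proposition \ref{lip} --- so all of them are $\simeq_{L}$-homotopies and the earlier arguments carry over verbatim. The only additional point worth an explicit (routine) verification is that the contraction homotopic to the identity in the step $\xi(\mathrm{id})=\mathrm{id}$ is itself non-expansive; this is immediate, since it collapses a metric ball to the root and then translates each remaining branch towards the root.
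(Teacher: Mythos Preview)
Your argument is correct and follows the strategy the paper only gestures at via its one-line citation of Corollary~\ref{cor_lip} and Proposition~\ref{lip}. You are right that Corollary~\ref{cor_lip} concerns geodesically complete $\mathbb{R}$-trees and so does not apply verbatim to arbitrary simplicial trees; what the paper is really invoking is the underlying mechanism from \cite{M-M}, namely that the shortest path homotopy between two non-expansive maps into a tree is non-expansive at each level. This is precisely the CAT(0)-type convexity estimate you isolate, and once it is in hand every homotopy used in Section~5 and in the proof of Theorem~\ref{equiv categ} (all of them shortest path homotopies between maps that are non-expansive either by hypothesis or by Proposition~\ref{lip}) is automatically a $\simeq_L$-homotopy, so the equivalence upgrades exactly as you describe via the comparison functor $\iota$. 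Your write-up thus makes explicit what the paper leaves to the reader through its citation.

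One small slip of phrasing: ``every non-expansive map is metrically proper'' is false as stated (a constant map is non-expansive but not metrically proper), and similarly ``a homotopy non-expansive at each level is in particular a metrically proper homotopy'' is not true in general. What you actually need, and what is trivially true, is that morphisms of $\mathcal{T}^{*}_{L}$ are \emph{by definition} rooted, metrically proper and non-expansive, hence in particular rooted continuous metrically proper, and that a $\simeq_L$-homotopy is by definition already a rooted metrically proper homotopy; so $\iota$ is well defined. This does not affect the substance of your argument.
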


\begin{cor} There is an equivalence of categories between \textbf{Tower-Set} and the
category of rooted simplicial trees with rooted coarse homotopy
classes of rooted continuous coarse maps.
\end{cor}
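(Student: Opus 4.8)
The plan is to show that the equivalence $\eta$ of Theorem~\ref{equiv categ} restricts and corestricts to an equivalence between the coarse subcategory and \textbf{Tower-Set}. Write $\mathcal{T^*_C}$ for the category of rooted simplicial trees with rooted coarse homotopy classes of rooted continuous coarse maps. Since every coarse map is in particular metrically proper and every rooted coarse homotopy is in particular a rooted metrically proper homotopy, the well\-definedness already established for $\eta$ shows that the assignment $(T,v)\mapsto(C_n,p_n,\mathbb{N})$, $f\mapsto\underline{f}$, descends to a functor $\eta_C\colon\mathcal{T^*_C}\to\textbf{Tower-Set}$. I would then check, mimicking the proof of Theorem~\ref{equiv categ}, that $\eta_C$ is full, faithful and essentially surjective, and conclude by the lemma recalled above from \cite{McL}.

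Essential surjectivity is immediate and holds on the nose: for any inverse sequence $\underline{X}$ one has $\eta_C(T_{\underline{X}})=\underline{X}$, exactly as in the last line of the proof of Theorem~\ref{equiv categ}. For fullness, given a morphism $[\underline{f}]$ of \textbf{Tower-Set}, I would pick the representative $(f_n,\Phi)$ with $q_n\circ f_{n+1}=f_n\circ p_{\Phi(n)\Phi(n+1)}$ and form the map $\hat f=\xi((f_n,\Phi))$ as in~\ref{tilde}. By Proposition~\ref{lip} the map $\hat f$ is non\-expansive, hence coarse, so $[\hat f]_{C}$ is a morphism of $\mathcal{T^*_C}$; and the computation $\eta(\hat f)\sim(f_n,\Phi)$ carried out in the proof of fullness of $\eta$ gives $\eta_C([\hat f]_{C})=[\underline{f}]$. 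Well\-definedness of $\eta_C$ on morphisms is part of the same package, since $\hat f\simeq_{C}\hat f'$ implies $\hat f\simeq_{mp}\hat f'$.

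The content is in faithfulness. Suppose $f,g\colon(T,v)\to(T',w)$ are rooted continuous coarse maps with $\eta(f)\sim\eta(g)$. Following the proof of Theorem~\ref{equiv categ} one obtains metrically proper homotopies $f\simeq_{mp}\hat f$, $\hat f\simeq_{mp}\hat g$ and $\hat g\simeq_{mp}g$, each realized by a shortest path homotopy (Definition~\ref{short_homot}), with $\hat f,\hat g$ coarse by Proposition~\ref{lip} and $f,g$ coarse by hypothesis. It remains to upgrade ``metrically proper'' to ``coarse at each level'' for these three homotopies. Each of them is the shortest path homotopy $H$ between two coarse maps into the tree $(T',w)$, and each was shown in the cited proofs to be metrically proper; properness of every level $H_t$ then follows at once from $H^{-1}(B(w,M))\subset B(v,N)\times I$. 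For the bornologous condition one uses the elementary fact about $\mathbb{R}$--trees that the point lying at a prescribed fraction of the length of a geodesic segment depends in a uniformly bornologous way on the two endpoints of the segment: if the endpoints move by at most $R$, then, since two geodesics with pairwise $R$--close endpoints have $R$--bounded Hausdorff distance and the corresponding arc\-length parameters differ by at most $R$, the point moves by at most a constant depending only on $R$, uniformly in the level parameter. Applying this with the endpoints taken to be the values of the two coarse maps, and combining with their own bornologous control, shows each $H_t$ is bornologous, hence coarse; this is the same argument that proves Corollary~\ref{cor_coar}. Thus $f\simeq_{C}g$ and $\eta_C$ is faithful.

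With $\eta_C$ full, faithful and essentially surjective, the lemma recalled from \cite{McL} yields that $\eta_C$ is an equivalence of categories, which is the assertion. The main obstacle is precisely the verification in the previous paragraph: the homotopies produced in the proof of Theorem~\ref{equiv categ} must be checked to be coarse, not merely metrically proper, at each level, and this rests on the tree\-geometric bornologousness statement above (equivalently, on invoking Corollary~\ref{cor_coar}); the rest is a transcription of the proof of Theorem~\ref{equiv categ}. (Replacing ``coarse'' by ``non\-expansive'' throughout and appealing to Corollary~\ref{cor_lip} in place of Corollary~\ref{cor_coar}, using that shortest path homotopies between non\-expansive maps are non\-expansive at each level, yields the preceding corollary by the same scheme.)
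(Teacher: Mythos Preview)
Your proposal is correct. The paper itself gives no proof beyond the sentence ``By \ref{cor_lip} and \ref{cor_coar} (also see \ref{lip}) we obtain the following corollaries,'' so your argument is a detailed unpacking of precisely what those citations encode: Proposition~\ref{lip} gives that the maps $\hat f$ produced by $\xi$ are coarse, and the content of Corollary~\ref{cor_coar} (which you explicitly invoke and essentially reprove) is that the shortest path homotopies used throughout the proof of Theorem~\ref{equiv categ} are coarse at each level when their endpoints are coarse maps. Your bornologousness step can in fact be stated more sharply via CAT(0) convexity of the metric on an $\mathbb{R}$--tree, namely $d(H_t(x),H_t(y))\le (1-t)\,d(f(x),f(y))+t\,d(g(x),g(y))$, which immediately gives uniform bornologous control of $H_t$ from that of $f$ and $g$; this is cleaner than the Hausdorff--distance phrasing you use, but the conclusion is the same.
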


\section{Mittag-Leffler property from the point of view of Serre's equivalence}

We give the definition of Mittag-Leffler property from \cite{MS1}
restricted to the particular case when the index set is
$\mathbb{N}$.

\begin{definicion} Let \textbf{X}$=(X_n,p_n,\mathbb{N})$ be an inverse
sequence in \textbf{Tower-$\mathcal{C}$}. We say that \textbf{X}
is Mittag-Leffler (ML) if $\forall n_0 \in \mathbb{N} \quad
\exists n_1>n_0$ such that $\forall n>n_1, \ p_{n_0}\circ \cdots
\circ p_{n-2}\circ p_{n-1}(X_n)=p_{n_0}\circ \cdots \circ
p_{n_1-2}\circ p_{n_1-1}(X_{n_1})$.
\end{definicion}

\begin{nota} Note that this definition doesn't depend on the
category $\mathcal{C}$. In fact \textbf{X} is (ML) if and only if
is (ML) as inverse sequence in \textbf{Tower-Set}.
\end{nota}

\begin{definicion} We say that $\alpha \in X_{n_0}$ is \emph{extendable} to
$n_1$ if there exist some $\beta \in X_{n_1}$ such that
$p_{n_0}\circ \cdots \circ p_{n_1-2}\circ p_{n_1-1} (\beta) =
\alpha$.
\end{definicion}

\begin{nota} In $T_{\underline{X}}$ this means that the path which connects
$\alpha$ with the root extends to a branch of length $n_1$ in the
tree. Note that this extended branch connects the root with an
element $\beta\in X_{n_1}$.
\end{nota}

The Mittag-Leffler property may be reformulated as follows:

\begin{definicion}\label{ML-re} The inverse sequence $(X_n,p_n,\mathbb{N})$
is (ML) if $\ \forall n_0 \ \exists n_1>n_0$ such that $\forall
\alpha \in X_{n_0}$ extendable to $n_1$, then $\alpha$ is
extendable to $n$ $\forall n>n_1$.
\end{definicion}

\begin{nota}\label{ML-arbol} In $T_{\underline{X}}$ this means that for each level
$n_0$ there exist some level $n_1$ such that for every $\alpha \in
X_{n_0}$ whose path connecting it to the root extends to a branch
of length $n_1$, then $\ \forall n>n_1$ that path can be extended
to some branch of length n.
\end{nota}

\begin{prop}\label{geod complete} Let $\underline{X}=(X_n,p_n,\mathbb{N})$ be an inverse sequence
and $T_{\underline{X}}$ the correspondent tree. If $\underline{X}$
is (ML), then for each level $n_0$, there is a level $n_1>n_0$
such that for any point $\alpha \in X_{n_0}$ extendable to $n_1$,
the path in $T_{\underline{X}}$ which connects the root with the
vertex $\alpha$ is geodesically complete.
\end{prop}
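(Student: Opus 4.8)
The plan is to unwind the definition of (ML) at a single level $n_0$ and show that the threshold $n_1$ it provides already forces any extendable $\alpha\in X_{n_0}$ to sit at the bottom of an \emph{infinite} branch, which is exactly geodesic completeness of $[v,\alpha]$ inside $T_{\underline X}$.

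First I would fix $n_0$ and invoke Definition \ref{ML-re}: choose $n_1>n_0$ such that every $\alpha\in X_{n_0}$ which is extendable to $n_1$ is extendable to $n$ for \emph{all} $n>n_1$. Now take such an $\alpha$. I want to produce an infinite branch of $T_{\underline X}$ passing through $\alpha$, i.e. a sequence $(x_n)_{n\ge 1}$ with $x_n\in X_n$, $p_n(x_{n+1})=x_n$, and $x_{n_0}=\alpha$ (with $x_j = p_{jn_0}(\alpha)$ for $j<n_0$). Extendability of $\alpha$ to each $n>n_1$ gives, for each such $n$, some $\beta_n\in X_n$ with $p_{n_0 n}(\beta_n)=\alpha$; the point is to assemble these into one coherent thread. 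This is the one genuinely non-routine step.

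The key step is therefore a compactness/diagonal argument. For each $n>n_1$ let $E_n=\{\beta\in X_n : p_{n_0 n}(\beta)=\alpha\}$; by hypothesis every $E_n$ is nonempty, and $p_{n-1}$ maps $E_n$ into $E_{n-1}$. Consider the inverse sequence $(E_n, p_n\!\mid_{E_n})$: I would note that $p_{n-1}(E_n)$ is a \emph{decreasing} (in $n$) sequence of nonempty subsets of the finite-looking... more precisely, even without finiteness, the point is that an inverse sequence of nonempty sets whose bonding maps are eventually surjective onto their images has nonempty limit once the images stabilize — and (ML) for $\underline X$ (applied at the level $n_0$, or re-applied at higher levels) is exactly what makes the images $p_{n_0 n}(X_n)$, and more relevantly the "reachable from $\alpha$" sets, stabilize. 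The cleanest route: (ML) says $p_{n_0 n}(X_n)=p_{n_0 n_1}(X_{n_1})$ for all $n>n_1$, so for the fixed extendable $\alpha$ one can choose $\beta_{n_1}\in X_{n_1}$ over $\alpha$, then successively lift it — at each stage $n\ge n_1$, $p_{n_0 (n+1)}(X_{n+1})$ still contains $\alpha$, hence some element of $X_{n+1}$ lies over whatever partial lift we have built, \emph{after possibly backtracking} finitely; to avoid backtracking I would instead run the argument with the subsets $F_n := \{\beta\in X_n : p_{n_0 n}(\beta)=\alpha\}$ and observe $p_{n_1,n}(F_n)$ is a nested decreasing family of nonempty subsets of the \emph{finite} set... — if one does not want to assume finiteness of the $X_n$, the honest statement is that $\bigcap_{n>n_1} p_{n_1 n}(F_n)\neq\varnothing$ follows because each $p_{n_1,n}(F_n)\supseteq p_{n_1,n+1}(F_{n+1})$ and all are nonempty, and then any element of this intersection lifts to a full thread by a standard diagonal selection. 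Pick $x_{n_1}$ in this intersection; for $n>n_1$ pick $x_n\in F_n$ with $p_{n_1 n}(x_n)=x_{n_1}$ coherently (again using that the images stabilize, now at level $n_1$, which is itself (ML) applied at $n_1$ — so I would at the outset choose $n_1$ large enough to also serve as an (ML)-threshold for level $n_1$, or simply iterate the defining property). Setting $x_j=p_{j n_0}(\alpha)$ for $j\le n_0$ and $x_j = p_{j n_1}(x_{n_1})$ for $n_0<j\le n_1$ completes an infinite branch through $\alpha$.

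Finally I would translate back: by Remark \ref{ML-arbol} / the description of infinite branches in Section \ref{tilde}, the existence of such $(x_n)_{n\ge1}$ says precisely that $[v,\alpha]$ extends to a geodesic ray in $T_{\underline X}$, and since $T_{\underline X}$ is simplicial with all edges of length $1$, being contained in an infinite branch is the same as being geodesically complete in the sense of Definition \ref{extensiongeod}. I expect the main obstacle to be purely bookkeeping: choosing the single level $n_1$ so that it simultaneously witnesses the (ML) condition "from $n_0$" and supplies the stabilization of images "from $n_1$" needed to thread the lifts without backtracking — i.e. making the diagonal selection genuinely coherent rather than just pointwise. Everything else is a direct unpacking of Definitions \ref{extensiongeod} and \ref{ML-re}.
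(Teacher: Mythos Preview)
Your proposal has a genuine gap at the key step. You assert that $\bigcap_{n>n_1} p_{n_1 n}(F_n)\neq\varnothing$ ``follows because each $p_{n_1,n}(F_n)\supseteq p_{n_1,n+1}(F_{n+1})$ and all are nonempty,'' but a nested sequence of nonempty sets can perfectly well have empty intersection, so this reasoning fails as stated. You gesture at a fix via (ML) at level $n_1$, and indeed $p_{n_1 n}(F_n)=F_{n_1}\cap p_{n_1 n}(X_n)$ does eventually stabilize once (ML) kicks in there; but then the ``coherent lifting'' of the chosen $x_{n_1}$ to a full thread is \emph{not} a standard diagonal selection. Knowing $x_{n_1}\in p_{n_1 n}(F_n)$ for every $n$ gives, for each $n$ separately, \emph{some} preimage in $F_n$ --- nothing forces these preimages to be compatible with one another. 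To make them compatible you must repeat the entire stabilization argument at level $n_1+1$, then $n_1+2$, and so on, which you yourself concede (``simply iterate the defining property'').

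Once you carry out that iteration you are doing exactly the paper's proof, minus the detour through intersections. The paper proceeds by direct one-step induction on the level: given $\alpha\in X_{n_0}$ extendable past its (ML)-threshold, apply (ML) now at level $n_0+1$ to obtain a threshold $N_1$; extend $\alpha$ to some $\gamma\in X_N$ with $N>N_1$, and set $\alpha':=p_{(n_0+1)N}(\gamma)$. Then $\alpha'$ sits over $\alpha$ and is itself extendable past $N_1$, so the step repeats and the infinite branch is built vertex by vertex. The obstacle you label ``purely bookkeeping'' --- finding a single $n_1$ that handles all levels at once --- is in fact the substance of the argument: no single threshold suffices, (ML) must be invoked afresh at each successive level, and that is precisely the induction.
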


\begin{proof} (ML) means, see remark \ref{ML-arbol}, that for
each level $n_0$ there is a level $n_1>n_0$ such that for any
vertex $\alpha \in X_{n_0}$ extendable to $n_1$, the path of the
tree which connects the root with the vertex $\alpha$ extends to a
path of length n $\forall n>n_1$. To see that the path extends to
a geodesically complete branch of the tree we proceed by
induction. First we extend it to level $n_0+1$ this way.

Since the inverse sequence is (ML), we apply this property at
level $n_0+1$. Hence, there exist some $N_1>n_0+1$ such that any
$\beta\in X_{n_0+1}$ extendable to $N_1$ is extendable to $N$,
$\forall N>N_1$ (see definition \ref{ML-re}). There is no problem
to assume $N_1>n_1$. If we apply (ML) to level $n_0$, it is clear
that also $\alpha$ is extendable to $N$ $\forall N>N_1$. This
implies that there exist some $\gamma\in X_{N}$ such that
$p_{n_0}\circ \cdots \circ p_{N-2}\circ p_{N-1}(\gamma)=\alpha$,
and that $\alpha':=p_{n_0+1}\circ \cdots \circ p_{N-2}\circ
p_{N-1}(\gamma)\in X_{n_0+1}$ is extendable to $N_1$. This allows
us to repeat the induction argument, and hence, the path is
geodesically complete. \end{proof}

It is immediate to see the following:

\begin{obs} A tree is geodesically complete if and only if all the
bonding maps of the induced inverse sequence are surjective.
\end{obs}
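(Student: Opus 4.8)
The plan is to translate geodesic completeness of the simplicial tree into the purely combinatorial statement that every vertex has at least one ``child'', and to note that a vertex $x\in C_n$ has a child exactly when $x\in p_n(C_{n+1})$, since each vertex at level $n+1$ has a unique neighbour at level $n$.

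First I would prove the forward implication. Assume $T:=T_{\underline{X}}$ (a rooted simplicial tree, all edges of length $1$) is geodesically complete, and fix $n\geq 1$ and $x\in C_n=X_n$. The arc $[v,x]$ is an isometric embedding $f\colon[0,n]\to T$ with $f(0)=v$, so by Definition~\ref{extensiongeod} it extends to an isometric embedding $\tilde{f}\colon[0,\infty)\to T$. Then $\tilde{f}(n+1)$ is a vertex $y\in C_{n+1}$ whose arc to the root passes through $x$; hence $x$ is the unique neighbour of $y$ at distance $n$ from $v$, i.e. $p_n(y)=x$. Thus $x\in p_n(C_{n+1})$, and since $x$ and $n$ were arbitrary, every bonding map $p_n\colon X_{n+1}\to X_n$ is surjective (and, trivially, $X_1\neq\emptyset$).

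Conversely, assume all the $p_n$ are surjective and $X_1\neq\emptyset$. Let $f\colon[0,t]\to T$ be an isometric embedding with $f(0)=v$, $t>0$; we must extend it to $[0,\infty)$. After a single preliminary step we may assume the (possibly already extended) path ends at a vertex: if $f(t)$ lies in the interior of the edge joining $x_m\in C_m$ to $x_{m+1}\in C_{m+1}$ (with $p_m(x_{m+1})=x_m$), extend $f$ linearly along that edge to reach $x_{m+1}$; if $f(t)$ is already a vertex, do nothing; if $t<1$ the path so far lies in the initial edge $[v,x_1]$ for some $x_1\in X_1$ and we extend to $x_1$. In all cases we have reached some vertex $x_k\in C_k$ ($k\geq 0$, with $x_0=v$). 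Now define vertices $x_{k+1},x_{k+2},\dots$ recursively: given $x_j\in C_j$, surjectivity of $p_j$ (respectively $X_1\neq\emptyset$ when $j=0$) yields $x_{j+1}\in C_{j+1}$ with $p_j(x_{j+1})=x_j$, so $[x_j,x_{j+1}]$ is a length-$1$ edge and, $T$ being a tree, $[v,x_{j+1}]=[v,x_j]\cup[x_j,x_{j+1}]$, i.e. no backtracking occurs. Concatenating these edges and parametrizing by arc length gives an isometric embedding $\tilde{f}\colon[0,\infty)\to T$ extending $f$, so $T$ is geodesically complete.

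The only delicate point is the bookkeeping in the preliminary step of the converse --- the cases where $f(t)$ is interior to an edge, or $t$ is so small that no vertex of $C_1$ has been reached yet --- but this is dispatched by extending along at most one edge before the induction starts; the rest is a routine recursion driven by surjectivity of the bonding maps together with the defining property of $p_n$.
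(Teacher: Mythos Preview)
Your proof is correct. The paper, however, does not give a proof at all: the statement is introduced with ``It is immediate to see the following'' and left as an observation. So there is nothing to compare against beyond noting that your argument is exactly the natural one --- translate geodesic completeness into ``every vertex has a child'' and observe that this is the same as surjectivity of each $p_n$.

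A minor remark: your parenthetical hypothesis $X_1\neq\emptyset$ in the converse is harmless but unnecessary. If $X_1=\emptyset$ then (since any map into the empty set has empty domain) every $X_n=\emptyset$, the tree reduces to the single point $\{v\}$, and geodesic completeness holds vacuously because there are no isometric embeddings $f\colon[0,t]\to T$ with $t>0$ to extend. So the equivalence holds without that side condition, and you could simply drop it.
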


Therefore, the maximal geodesically complete subtree is the
maximal subtree such that all the bonding maps of its inverse
sequence are surjective.

In \cite{MS1} we can find the following theorem at $[II.\S 6.2]$
referred to inverse systems.

\begin{prop} $\underline{X}$ is (ML) if and only if it is
isomorphic to an inverse sequence with surjective bonding maps.
\end{prop}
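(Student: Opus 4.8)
The statement to prove is: $\underline{X}$ is (ML) if and only if it is isomorphic to an inverse sequence with surjective bonding maps. I would prove the two implications separately, and the geometric language we have set up does essentially all the work in one direction.

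First, the ``if'' direction. Suppose $\underline{X}$ is isomorphic in \textbf{Tower-Set} to some $\underline{Y}=(Y_n,q_n,\mathbb{N})$ with all $q_n$ surjective. By the Remark preceding the statement, $\underline{Y}$ being an inverse sequence with all bonding maps surjective is equivalent to $T_{\underline{Y}}$ being geodesically complete; in particular $T_{\underline{Y}}$ trivially has the (ML) property in the tree-reformulation of Remark \ref{ML-arbol} (every path extends to every length, so one may take $n_1=n_0$). Since (ML) is invariant under isomorphism in \textbf{Tower-Set} --- this is clear from the definition, as an isomorphism of inverse sequences is a pair of morphisms composing to the identity, and surjectivity of the relevant compositions $p_{n_0}\circ\cdots\circ p_{n-1}$ is preserved up to reindexing --- it follows that $\underline{X}$ is (ML). Alternatively, and more in the spirit of the paper, one can argue that the isomorphism induces (via $\xi$ and Theorem \ref{equiv categ}) a rooted metrically proper homotopy equivalence $T_{\underline{X}}\simeq_{mp} T_{\underline{Y}}$, and (ML) is a metrically-proper-homotopy invariant of the tree; but invoking the direct combinatorial argument is cleaner.

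Now the ``only if'' direction, which is the substantive one. Suppose $\underline{X}$ is (ML). The idea is to use Proposition \ref{geod complete}: for each level $n_0$ there is a level $n_1(n_0)>n_0$ so that any $\alpha\in X_{n_0}$ extendable to $n_1(n_0)$ in fact lies on a geodesically complete branch. Iterating, choose a strictly increasing sequence $m_0<m_1<m_2<\cdots$ with $m_{k+1}>n_1(m_k)$, and set $Y_k := p_{m_k\,m_{k+1}}(X_{m_{k+1}})\subseteq X_{m_k}$, the image of the composite bonding map, with the restricted bonding maps $q_k:Y_{k+1}\to Y_k$ given by $p_{m_k\,m_{k+1}}$. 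One must check: (i) each $q_k$ is well-defined with image in $Y_k$ and is \emph{surjective} --- surjectivity is exactly where (ML) is used, since for $\alpha\in Y_k$ the (ML) property guarantees $\alpha$ is extendable arbitrarily far, in particular to $m_{k+1}$ through an element already in the image coming from $m_{k+2}$, so $\alpha\in q_k(Y_{k+1})$; and (ii) the inverse sequence $\underline{Y}=(Y_k,q_k,\mathbb{N})$ is isomorphic to $\underline{X}$ in \textbf{Tower-Set}. For (ii) the natural candidate morphisms are the inclusions $Y_k\hookrightarrow X_{m_k}$ in one direction, which together with the reindexing $\Phi(k)=m_k$ and the bonding maps $p_{m_k\,m_{k+1}}$ landing inside $Y_k$ in the other direction, compose (up to the equivalence $\sim$ of morphisms of sequences, and after passing far enough down the tower) to the respective identities; this is a routine cofinality-and-telescoping verification.

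**Main obstacle.** The genuinely delicate point is the surjectivity of the restricted bonding maps $q_k$, i.e. showing $p_{m_k\,m_{k+1}}(Y_{k+1})=Y_k$ rather than merely $\subseteq$. This is where one must deploy Proposition \ref{geod complete} (or equivalently Definition \ref{ML-re}) carefully: one needs that an element $\alpha\in Y_k=p_{m_k\,m_{k+1}}(X_{m_{k+1}})$, being extendable to $m_{k+1}$, is extendable to $m_{k+2}$, so it is hit by an element of $X_{m_{k+2}}$, whose image in $X_{m_{k+1}}$ then lies in $Y_{k+1}$ and maps onto $\alpha$ --- and here the gap $m_{k+2}>n_1(m_{k+1})$ is precisely what makes ``extendable to $m_{k+1}$'' upgrade to ``extendable to $m_{k+2}$''. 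Once surjectivity is in hand, the isomorphism $\underline{X}\cong\underline{Y}$ is the standard fact that passing to the images of composite bonding maps along a cofinal subsequence yields an isomorphic inverse sequence, and the equivalence of categories developed above is not even needed for this last bookkeeping.
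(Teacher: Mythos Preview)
The paper does not actually prove this proposition; it merely quotes it from Marde\v{s}i\'c--Segal \cite{MS1}, II~\S6.2, and then \emph{uses} it (together with the equivalence of categories) to establish the next result, Proposition~\ref{ML-prop}. Your argument is correct and is essentially the standard proof one finds in that reference: pass to a cofinal subsequence governed by the (ML) indices and replace each term by the image of the next bonding map.

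One small slip in your write-up: in the last paragraph you say that the inequality $m_{k+2}>n_1(m_{k+1})$ is what upgrades ``$\alpha$ extendable to $m_{k+1}$'' to ``$\alpha$ extendable to $m_{k+2}$''. That is not the right inequality. The element $\alpha$ lives in $X_{m_k}$, so the relevant (ML) index is $n_1(m_k)$, and it is the inequality $m_{k+1}>n_1(m_k)$ --- which you \emph{do} build into your construction --- that does the job: $\alpha$ extendable to $m_{k+1}\geq n_1(m_k)$ forces $\alpha$ extendable to every level, in particular to $m_{k+2}$. Your construction is fine; only the sentence explaining it points to the wrong index.
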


With this, and by theorem \ref{equiv categ} we can give the
following:

\begin{prop}\label{ML-prop} $\underline{X}$ is (ML) if and only if there is a rooted
metrically proper homotopy equivalence between $T_{\underline{X}}$
and its maximal geodesically complete subtree $T_\infty$. Moreover
the homotopy can be chosen to be a deformation retract.
\end{prop}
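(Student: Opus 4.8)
The plan is to handle the two implications separately: the backward implication will be essentially formal, using the categorical equivalence of Theorem \ref{equiv categ}, while the forward implication will require building an explicit retraction and bounding it quantitatively via Proposition \ref{geod complete}.

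For the ``if'' implication I would argue as follows. A rooted metrically proper homotopy equivalence between $T_{\underline{X}}$ and $T_\infty$ is exactly an isomorphism between these two objects of $\mathcal{T^*}$, so applying the functor $\eta$ produces an isomorphism $\eta(T_{\underline{X}})\to\eta(T_\infty)$ in \textbf{Tower-Set}. Now $\eta(T_{\underline{X}})=\underline{X}$ (as established in the proof of Theorem \ref{equiv categ}), and $\eta(T_\infty)$ has all bonding maps surjective because $T_\infty$ is geodesically complete (by the remark that geodesic completeness of a tree is equivalent to surjectivity of the bonding maps of its inverse sequence). Hence $\underline{X}$ is isomorphic in \textbf{Tower-Set} to an inverse sequence with surjective bonding maps, which by the cited characterization means $\underline{X}$ is (ML).

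For the ``only if'' implication, assuming $\underline{X}$ is (ML), I would extract from Proposition \ref{geod complete} --- applied at every level, and using that being extendable to a level forces extendability to all smaller levels --- a nondecreasing function $n_0\mapsto n_1(n_0)>n_0$ such that every vertex at level $n_0$ not lying in $T_\infty$ fails to be extendable to level $n_1(n_0)$. Then I would take $r:T_{\underline{X}}\to T_\infty$ to be the nearest-point retraction, which is rooted, continuous and $1$-Lipschitz and which collapses each maximal finite branch onto the vertex $y\in T_\infty$ at which it detaches. If $y$ is at level $j$, the first vertex of that branch beyond $y$ lies at level $j+1$ and outside $T_\infty$, hence is not extendable to $n_1(j+1)$, which forces the branch to have length $<n_1(j+1)$. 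This length bound gives $r^{-1}(A)\subset B(v,n_1(\lceil R\rceil+1))$ whenever $A\subset T_\infty\cap B(v,R)$, so $r$ is metrically proper; and the shortest path homotopy (Definition \ref{short_homot}) from $\mathrm{id}_{T_{\underline{X}}}$ to $\iota\circ r$ is rooted and satisfies the same kind of estimate (along each collapse the distance to the root only decreases while remaining $\ge d(v,r(x))$), hence is metrically proper. This exhibits $T_\infty$ as a rooted metrically proper deformation retract of $T_{\underline{X}}$, proving both the homotopy-equivalence claim and the ``moreover'' clause.

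The crux --- and the unique place where (ML) enters --- is the uniform bound on the lengths of the finite branches dangling over bounded regions of $T_{\underline{X}}$; this is exactly the content Proposition \ref{geod complete} supplies, and it is what upgrades the naive retraction (which, as noted in the text, always exists in the simplicial setting) to a metrically proper one. Without (ML) one could attach arbitrarily long finite branches near the root, making $r^{-1}$ of a ball unbounded. The remaining verifications --- continuity and the properness estimate for the shortest path homotopy --- are routine in trees and mirror the computation for $r$ itself.
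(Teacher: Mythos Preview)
Your proposal is correct and follows essentially the same approach as the paper's own proof: both use Proposition~\ref{geod complete} to control finite branches, define the nearest-point (shortest path) deformation retraction onto $T_\infty$, and verify metric properness via the resulting level estimate; and both deduce the converse from the equivalence of Theorem~\ref{equiv categ} together with the fact that the inverse sequence of $T_\infty$ has surjective bonding maps. The only cosmetic difference is that you phrase the key estimate contrapositively (a vertex off $T_\infty$ cannot be extended to level $n_1$), whereas the paper states it directly (a branch reaching level $t_n$ has its first $n$ levels inside $T_\infty$); these are equivalent formulations of the same bound.
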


\begin{proof} Suppose $\underline{X}$ is (ML). By \ref{geod complete}, for each
level $n$, there is a level $t_n>n$ such that for any point
$\alpha \in X_{n}$ extendable to $t_n$, the path in
$T_{\underline{X}}$ which connects the root with the vertex
$\alpha$ is geodesically complete.

Let $T_\infty$ the maximal geodesically complete subtree. For each
point $x\in T_{\underline{X}}$ let $y_x\in T_\infty$ be such that
$d(x,T_\infty)=d(x,y_x)$ and $j_x:[0,d(x,T_\infty)]\to [x,y_x]$
the isometry from the subinterval in $\mathbb{R}$ to the unique
arc between $x$ and $y_x$. Thus, let $H:T_{\underline{X}}\times I
\to T_{\underline{X}}$ such that $H(x,t)=j_x(t\cdot
d(x,T_\infty))$. Clearly $H$ is an homotopy such that $H_0=id$ and
$H_1=r: T_{\underline{X}} \to T_\infty$ with $H(x,t)=x \ \forall
t\in I \ \forall x \in T_\infty$ ($T_\infty$ is a deformation
retract of $T_{\underline{X}}$, by \ref{retracto}, since the
metric of a simplicial tree is proper when we consider the edges
of length 1).

This homotopy $H$ is metrically proper. For every finite branch
$F$ with length $m\geq t_{n} $ there is a geodesically complete
branch extending the subbranch of length $n$ and hence the
homotopy $H$ sends the points on $T_{F(t_{n})}$ to $T_{F(n)}$ and
hence $H^{-1}(B(w,n))\subset B(v,t_{n})$.

Conversely, this equivalence implies that the inverse sequence is
isomorphic to the inverse sequence induced by the geodesically
complete subtree, whose bonding maps are obviously surjective.
\end{proof}



If we consider two inverse sequences to be related if and only if
they are isomorphic and the correspondent equivalence of maps as
Marde\v{s}i\'c and Segal do to define the shape category in
\cite{MS1} I $\S 2.3$ we get the following result:

\begin{prop}\label{ML-cat} There is an equivalence of categories between
classes of (ML) inverse sequences with classes of morphisms
between them and isomorphism classes of rooted (simplicial)
geodesically complete trees with classes of metrically proper
homotopy classes of rooted continuous metrically proper maps.
\end{prop}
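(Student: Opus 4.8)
The plan is to build the equivalence out of the pieces already assembled in the paper, restricting the functors $\eta$ and $\xi$ to the appropriate subcategories and then passing to isomorphism classes. First I would observe that by the Remark following the observation that "a tree is geodesically complete iff all bonding maps of its induced inverse sequence are surjective", together with Proposition stating $\underline{X}$ is (ML) iff it is isomorphic to an inverse sequence with surjective bonding maps, we have a clean dictionary: an inverse sequence is (ML) precisely when it is isomorphic (in \textbf{Tower-Set}) to one whose associated tree is geodesically complete. Hence the full subcategory of \textbf{Tower-Set} spanned by (ML) sequences, localized at isomorphisms in the sense of Marde\v{s}i\'c--Segal (I $\S 2.3$), is equivalent to the full subcategory spanned by those $\underline{X}$ with $T_{\underline{X}}$ geodesically complete, again localized at isomorphisms.

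Next I would invoke Theorem \ref{equiv categ}: $\eta:\mathcal{T^*}\to$ \textbf{Tower-Set} is an equivalence of categories, where $\mathcal{T^*}$ is rooted simplicial trees with metrically proper homotopy classes of metrically proper maps, and $\xi$ is a quasi-inverse. Restricting $\eta$ to the full subcategory of $\mathcal{T^*}$ consisting of geodesically complete rooted simplicial trees, and using that $\eta(T_{\underline{X}})=\underline{X}$ with $C_n = X_n$, this restriction lands (up to isomorphism) exactly in the (ML) subcategory of \textbf{Tower-Set}. Because $\eta$ was already full and faithful, its restriction to any full subcategory remains full and faithful; and by the previous paragraph every (ML) sequence is isomorphic to $\eta$ of some geodesically complete tree, so the restriction is also essentially surjective. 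Thus the restricted $\eta$ is an equivalence of categories between geodesically complete rooted simplicial trees (with metrically proper homotopy classes of metrically proper maps) and (ML) inverse sequences.

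Then I would pass to the quotient categories in which isomorphic objects are identified and morphism classes are taken as in the definition of the shape category. Since an equivalence of categories respects isomorphisms and carries the Marde\v{s}i\'c--Segal morphism-equivalence on the tower side to the corresponding relation $\simeq_{mp}$ on the tree side (this is essentially the content of $\eta$ being an equivalence, combined with Proposition \ref{ML-prop} which identifies when two (ML) towers give metrically-proper-homotopy-equivalent trees), the induced functor between the quotient categories is again an equivalence. One should note here that on the geodesically complete side the analogue of Proposition \ref{ML-prop} is automatic: a geodesically complete tree $T_{\underline{X}}$ already equals its own maximal geodesically complete subtree, so the deformation-retract statement is the identity, and isomorphism of the towers corresponds exactly to rooted metrically proper homotopy equivalence of the trees.

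The main obstacle, and the step that deserves the most care, is verifying that passing to isomorphism classes of objects genuinely produces a well-defined category and that the equivalence descends to it — in particular that "isomorphism classes of geodesically complete simplicial trees with classes of metrically proper homotopy classes of maps" is the right target and matches, under $\eta$, the Marde\v{s}i\'c--Segal-style quotient on the (ML) side. The subtlety is purely bookkeeping about how morphism sets behave under the identification of isomorphic objects (composites of classes are classes of composites, which is immediate once $\eta$ is known to be a functor and an equivalence), so no genuinely new argument is required beyond Theorem \ref{equiv categ}, Proposition \ref{ML-prop}, and the ML-to-surjective-bonding-maps dictionary; the proof is essentially an assembly of these.
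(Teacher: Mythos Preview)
Your proposal is correct and follows essentially the same route as the paper: the paper does not give a separate proof environment for this proposition, but presents it as an immediate consequence of Theorem~\ref{equiv categ} and Proposition~\ref{ML-prop} together with the Marde\v{s}i\'c--Segal identification of isomorphic objects, which is exactly the assembly you describe. Your write-up is in fact more explicit than the paper's, spelling out the full/faithful/essentially-surjective verification and the passage to quotient categories that the paper leaves implicit.
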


The condition on the trees of being simplicial may be omitted by
the following proposition.

\begin{prop} For every rooted $\mathbb{R}$-tree $(T,v)$ there is a simplicial
rooted tree $(T',w)$ such that $(T,v)\simeq_L (T',w)$. Moreover
there is a bi-Lipschitz homeomorphism between $end(T,v)$ and
$end(T',w)$.
\end{prop}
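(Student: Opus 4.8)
The plan is to build the simplicial model $(T',w)$ as a subdivision-style "rounding" of $(T,v)$, then produce mutually inverse non-expansive maps between the two trees and check metrical properness of the homotopies. First I would observe that every point $x\in T$ lies on some branch $F$ (finite or infinite) at parameter $d(v,x)$, and that the combinatorial data of $(T,v)$ is captured by the inverse sequence $\underline{X}=(X_n,p_n)$ with $X_n=\partial B(v,n)$ and $p_n$ the retraction toward the root. The natural candidate is $(T',w):=T_{\underline{X}}$, the simplicial tree of that inverse sequence, so that $\eta(T',w)=\underline{X}$. The key point is that the identity-type correspondence at the integer levels — sending the vertex $\alpha\in X_n\subset T$ to the corresponding vertex of $T_{\underline{X}}$ at distance $n$ from $w$ — extends linearly along edges to a rooted homeomorphism-up-to-reparametrization. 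Concretely, I would define $\varphi:(T,v)\to(T',w)$ on each branch $F$ by sending $F(t)$ (for $t\in[n,n+1]$, or the final partial interval of a finite branch) to the point of the corresponding branch of $T_{\underline{X}}$ at parameter equal to $t$ itself; since consecutive integer-level vertices in $T$ are at distance $1$ from each other along a branch and the same holds in $T_{\underline{X}}$ (all edges have length $1$), this map is a bijective isometry on each branch, hence non-expansive on all of $T$ by the usual tripod argument (as in the proof of Proposition~\ref{lip}: for $x,y$ on different branches, split the geodesic at the branch point $F(t_0)$ and add up). The inverse $\psi:(T',w)\to(T,v)$ is built the same way in reverse and is likewise non-expansive. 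Both are clearly rooted and metrically proper, indeed $\varphi^{-1}(B(w,r))= B(v,r)$ for $r\in\mathbb{N}$, so no level-shifting is needed.

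Next I would check $\varphi\simeq_L$-stuff is trivial in the sense that $\psi\circ\varphi$ and $\varphi\circ\psi$ are literally the identity maps — with the parametrization above there is no discrepancy to homotope away, so $(T,v)\simeq_L(T',w)$ holds on the nose, not merely up to homotopy. (If one prefers the bookkeeping of Section~\ref{tilde} where the induced tree map lands two levels out, one instead gets the standard shortest-path homotopy to the identity, which is metrically proper by exactly the estimate used in the faithfulness part of Theorem~\ref{equiv categ}; either way the conclusion $\simeq_L$ follows.) This disposes of the first assertion.

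For the second assertion — a bi-Lipschitz homeomorphism $end(T,v)\to end(T',w)$ — I would pass to the end. An infinite branch of $T$ is a sequence $(x_n)$ with $x_n\in X_n$, $p_n(x_{n+1})=x_n$, i.e. precisely an infinite branch of $T_{\underline{X}}$; so $\varphi$ restricts to a bijection $\Phi_\infty$ between the sets of infinite branches, hence between $end(T,v)$ and $end(T',w)$ once we recall both end spaces only see geodesically complete branches (an isometric embedding $[0,\infty)\to T$ hits every integer level exactly once). It remains to compare the ultrametrics $d_e$. For two distinct infinite branches $f,g$, $t_0=\sup\{t:f(t)=g(t)\}$ is the parameter of the branch point; under $\varphi$ the branch point of $\Phi_\infty(f),\Phi_\infty(g)$ sits at parameter $t_0$ as well when $t_0$ happens to be an integer, but in general a branch point of $(T,v)$ at non-integer parameter $t_0\in(n,n+1)$ gets pushed to the integer level $n$ in $T_{\underline X}$ (the two branches already diverge at the vertex $x_{n+1}\ne y_{n+1}$ there), so the new splitting parameter $t_0'$ satisfies $n\le t_0'\le t_0< n+1$, whence $|t_0-t_0'|<1$ and $e^{-t_0}\le e^{t_0'-t_0}\,e^{-t_0'}\le e\cdot e^{-t_0'}$ and symmetrically $e^{-t_0'}\le e\cdot e^{-t_0}$; that is, $e^{-1}d_e(f,g)\le d_e(\Phi_\infty f,\Phi_\infty g)\le e\cdot d_e(f,g)$. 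So $\Phi_\infty$ is bi-Lipschitz with constant $e$.

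The main obstacle is purely the reparametrization bookkeeping at non-integer branch points: one must be careful that "rounding" a branching parameter down to the nearest integer does not change which branch a given end lies on (it does not, since two ends that agree up to $t_0\in(n,n+1)$ already share the vertex $x_{n+1}$ iff $t_0=n+1$, so if they differ they differ by level $n+1$, and the image branches split exactly at level $n$), and to get the clean factor $e$ rather than something worse in the bi-Lipschitz estimate. Everything else — non-expansiveness of $\varphi,\psi$, rootedness, metrical properness of the homotopies — is the same tripod-plus-properness routine already exercised in Proposition~\ref{lip} and Theorem~\ref{equiv categ}, so I would cite those rather than rederive them.
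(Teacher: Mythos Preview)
Your construction of $T'=T_{\underline{X}}$ with $X_n=\partial B(v,n)$ is the right one, and your bi-Lipschitz analysis of the end spaces is essentially correct. The gap is in the first part: your map $\varphi:(T,v)\to(T',w)$ is \emph{not well-defined}. You define $\varphi(F(t))$ to be the point at parameter $t$ on the corresponding simplicial branch, but a single point $x\in T$ can be written as $F(t)=G(t)$ for two different branches $F,G$, and if their branch point sits at a non-integer parameter the images disagree. Concretely: let $T$ be a ``Y'' with the fork at height $\tfrac12$, so two infinite branches $F,G$ share $F(\tfrac12)=G(\tfrac12)=:p$ but $F(1)\ne G(1)$. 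In $T'$ the two edges $[w,F(1)]$ and $[w,G(1)]$ meet only at $w$, so the two candidate images of $p$ at parameter $\tfrac12$ are distinct points of $T'$. The same example shows $\varphi$ cannot be a bijection and that $\psi\circ\varphi$, $\varphi\circ\psi$ are not literally the identity; your own observation in the bi-Lipschitz paragraph that a branch point at $t_0\in(n,n+1)$ ``gets pushed to level $n$'' is precisely the phenomenon that breaks well-definedness of $\varphi$.

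The paper's proof fixes this not as an optional bookkeeping choice but out of necessity: one direction $f:T'\to T$ is the obvious isometric inclusion of each simplicial edge $[c_n,c_{n+1}]$ into the arc $[c_n,c_{n+1}]\subset T$ (well-defined, non-expansive, but not injective for the same reason as above), while the other direction $g:T\to T'$ carries a level shift, $g(F(t))=\tilde g(F)(t-1)$ for $t>1$ and $g(\overline B(v,1))=w$. The shift is exactly what guarantees well-definedness: if $F(t)=G(t)$ with $t\in(n,n+1]$ then $F(k)=G(k)$ for all $k\le n$, so the simplicial branches $\tilde g(F),\tilde g(G)$ agree up to parameter $n\ge t-1$. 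Then $g\circ f$ and $f\circ g$ both act as $F(t)\mapsto F(t-1)$, and the shortest-path homotopy to the identity is metrically proper and non-expansive at each level. Your parenthetical ``alternative'' is in fact the only route that works; the unshifted $\varphi$ does not exist as a map.
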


\begin{proof} Let $(T,v)$ be an $\mathbb{R}$-tree. Let
$C_n:=\partial B(v,n)$ and $p_n: C_{n+1}\to C_n$ with
$p_n(c_{n+1})=c_n$ if and only if $c_n\in [v,c_{n+1}]$.
$\underline{C}=(C_n,p_n,\mathbb{N})$ is an inverse sequence. Let
$(T_{\underline{C}},w)$ the induced rooted simplicial tree. Then
there is a rooted metrically proper non-expansive homotopy
equivalence.

Let $f:(T_{\underline{C}},w) \to (T,v)$ be such that $f(w)=v$,
$f(c_n)=c_n$ and for each edge $f([c_n,c_{n+1}])=[c_n,c_{n+1}]$
the isometric embedding. The map $f$ is well defined, rooted,
continuous, metrically proper and non-expansive.

For each branch $F$ of $(T,v)$ there is a branch $\tilde{g}(F)$ on
$(T_{\underline{C}},v)$ whose vertices are $F(n)$ with $n\in
\mathbb{N}$ ($n=1,\cdots , k$ if $F$ is finite). To define
$g:(T,v) \to (T_{\underline{C}},w)$ let $g(\overline{B}(v,1))=w$
and $g(F(t))=\tilde{g}(F)(t-1)$ if $t>1$. The map $g$ is well
defined, rooted continuous metrically proper and non-expansive.

Both $g\circ f$ and $f\circ g$ send any point $F(t)$ to $F(t-1)$
if $t>1$. Hence both are rooted metrically proper non-expansive
homotopic to the identity (the shortest path homotopy is
non-expansive at each level).

If we consider the induced map $\tilde{f}: end(T',w)\to end(T,v)$
it is clearly a bijection. It is also immediate to see that
$\forall F,G \in end(T',w)$ there is some $n_0\in \mathbb{N}$ such
that $d(F,G)=e^{-n_0}$. This means that $F(n)=G(n)\quad \forall
n\leq n_0$ and $F(n_0+1)\neq G(n_0+1)$. It is clear from the
construction of $T'$ that $\tilde{f}(F)(n_0)=\tilde{f}(G)(n_0)$
and $\tilde{f}(F)(n_0+1)\neq \tilde{f}(G)(n_0+1)$. Hence
$e^{-n_0-1}<d(\tilde{f}(F),\tilde{f}(G))\leq e^{-n_0}$ and thus
$\frac{1}{e}d(F,G)< d(\tilde{f}(F),\tilde{f}(G))\leq d(F,G)$.
\end{proof}

This result, with \ref{tma equiv} yields

\begin{cor} For any complete ultrametric space of diameter $\leq
1$ $(X,d)$, there is a simplicial rooted tree $(T,v)$ such that
$end(T,v)$ is bi-Lipschitz homeomorphic to $(X,d)$.
\end{cor}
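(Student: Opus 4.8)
The plan is to combine the construction $U \mapsto T_U$ with the preceding proposition. Given a complete ultrametric space $(X,d)$ of diameter $\leq 1$, first apply Proposition \ref{induced tree} to obtain the geodesically complete rooted $\mathbb{R}$-tree $(T_X,D)$ with root $v=[x,0]$ (for any $x\in X$). The first point to record is that $end(T_X,v)$ is isometric to $(X,d)$: this is exactly the content of the equivalence of categories $\mathcal{T}\simeq\mathcal{U}$ of Theorem \ref{tma equiv} (see \cite{M-M}), where the functors are implemented by $T\mapsto end(T,v)$ and $U\mapsto T_U$, so that the composite $U\mapsto end(T_U,v)$ is naturally isomorphic to the identity and, at the level of the underlying ultrametric spaces, is an isometry. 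Thus we have an isometry $\varphi:(X,d)\to end(T_X,v)$.

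Next I would feed the $\mathbb{R}$-tree $(T_X,v)$ into the previous proposition: there is a simplicial rooted tree $(T',w)$ with $(T_X,v)\simeq_L(T',w)$ and, more importantly for us, a bi-Lipschitz homeomorphism $\psi:end(T_X,v)\to end(T',w)$. Composing, $\psi\circ\varphi:(X,d)\to end(T',w)$ is a bi-Lipschitz homeomorphism, since an isometry followed by a bi-Lipschitz homeomorphism is again bi-Lipschitz (the constants multiply). This exhibits the simplicial rooted tree $(T',w)$ required by the statement.

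The only step that needs any care is the identification $end(T_X,v)\cong(X,d)$; everything else is formal composition of maps with controlled distortion. This identification is not proved afresh here but is quoted from \cite{M-M} via Theorem \ref{tma equiv}, so the argument reduces to that citation together with the preceding proposition. (If one wished to be fully self-contained, one would check directly from the definition of $D$ in $T_U$ that, for $\alpha\ne\beta$ in $X=U$, the two rays $t\mapsto[\alpha,t]$ and $t\mapsto[\beta,t]$ agree precisely on $[0,-\ln d(\alpha,\beta)]$, so that $d_e$ of the corresponding ends equals $e^{\ln d(\alpha,\beta)}=d(\alpha,\beta)$; but this is subsumed in the cited equivalence.)
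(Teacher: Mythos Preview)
Your proposal is correct and follows exactly the route the paper intends: the paper's proof is the single line ``This result, with \ref{tma equiv} yields'', i.e., build $T_X$ from $(X,d)$ via Proposition~\ref{induced tree}/Theorem~\ref{tma equiv} and then apply the preceding proposition to pass to a simplicial tree with bi-Lipschitz control on ends. Your parenthetical direct check that $end(T_X,v)\to X$ is actually an isometry (not merely a uniform homeomorphism, which is all the categorical equivalence would give on its face) is a useful clarification that the paper leaves implicit.
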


In particular, let us consider the category $\mathcal{U^*}$ whose
objects are uniformly homeomorphic classes of complete ultrametric
spaces of diameter $\leq 1$ and whose morphisms are classes of
uniformly continuous maps, where two uniformly continuous maps
$f,g$ are related if the following diagram commutes

\begin{displaymath}
\xymatrix{X \ar[r]^i \ar[d]_f & X' \ar[d]^{f'}\\
            Y \ar[r]_j & Y'}
\end{displaymath}

with i,j uniform homeomorphisms.

Similarly, let $\mathcal{S^*}$ be the category whose objects are
metrically proper homotopy classes of (ML) rooted simplicial trees
and whose morphisms are classes of morphisms in $\mathcal{T^*}$
making the diagram commutative

\begin{displaymath}
\xymatrix{S \ar[r]^i \ar[d]_f & S' \ar[d]^{f'}\\
          T \ar[r]_j & T'}
\end{displaymath}

with i,j rooted metrically proper homotopy equivalences.

Then, by \ref{ML-cat}, we can state:

\begin{prop} There is an equivalence of categories between
$\mathcal{U^*}$ and $\mathcal{S^*}$.
\end{prop}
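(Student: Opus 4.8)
The plan is to assemble this equivalence from the pieces already in place, so the proof is essentially a diagram-chase verifying that the two ``quotiented'' categories inherit the equivalence from the unquotiented one. Recall that Proposition~\ref{ML-cat} gives an equivalence $\Theta$ between the category of (ML) inverse sequences (morphisms up to isomorphism of objects) and the category of isomorphism classes of rooted simplicial geodesically complete trees with metrically proper homotopy classes of maps; and Theorem~\ref{tma equiv} (together with Corollary~\ref{cor_lip}) gives the equivalence between $\mathcal{U}$ and $\mathcal{T}$, which on (ML) sequences matches the classical Serre-type correspondence via end spaces. The strategy is: first observe that $\mathcal{U}^*$ and $\mathcal{S}^*$ are each obtained from an already-equivalent pair of categories by the \emph{same} formal construction, namely ``collapse the objects along a chosen class of isomorphisms and identify morphisms that fit into a commuting square whose horizontal arrows are those isomorphisms.''

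First I would make precise that uniform homeomorphisms of complete ultrametric spaces of diameter $\leq 1$ are exactly the isomorphisms in $\mathcal{U}$, and that rooted metrically proper homotopy equivalences of (ML) simplicial trees are exactly the isomorphisms in the target category of Proposition~\ref{ML-cat}; this is immediate from the definitions and from the fact that in $\mathcal{T}$ isomorphism is precisely rooted metrically proper homotopy equivalence. Hence $\mathcal{U}^*$ is the quotient of (the (ML)-relevant part of) $\mathcal{U}$ by its isomorphisms, and $\mathcal{S}^*$ is the corresponding quotient of the tree category. Next I would invoke the general principle that an equivalence of categories $F\colon \mathcal{A}\to\mathcal{B}$ descends to an equivalence between the isomorphism-quotients: define $F^*$ on objects by sending an isomorphism class $[A]$ to $[F(A)]$, and on morphisms by sending the class of $f$ (represented up to iso-squares) to the class of $F(f)$; this is well defined because $F$ carries isomorphisms to isomorphisms and commuting squares to commuting squares, and it is full and faithful because $F$ is, and essentially surjective because $F$ is. Concretely here $F$ is the composite of the end-space functor from $\mathcal{T}$ to $\mathcal{U}$ (Theorem~\ref{tma equiv}) restricted appropriately, so $F^*$ sends $[S]$ to $[end(S,v)]$ and a class $[f]$ to $[end(f)]$.

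The one point that needs genuine checking — the main (if modest) obstacle — is that the equivalence of Proposition~\ref{ML-cat} is \emph{compatible} with the end-space equivalence, i.e.\ that on (ML) objects the functor $\xi$/$\eta$ of Sections~5--6 agrees, up to natural isomorphism, with the classical correspondence ``tree $\leftrightarrow$ end space'' under which $T_{\underline X}\mapsto end(T_{\underline X},v)$. This is where I would spend a line or two: by Proposition~\ref{ML-prop} an (ML) sequence's tree is metrically proper homotopy equivalent to its maximal geodesically complete subtree $T_\infty$, and $end$ is invariant under rooted metrically proper homotopy equivalence by Theorem~\ref{tma equiv}; so composing $\eta$ from Proposition~\ref{ML-cat} with the $\mathcal{T}$-to-$\mathcal{U}$ equivalence does land us where we want, and the square condition defining morphisms in $\mathcal{S}^*$ translates exactly to the square condition in $\mathcal{U}^*$ because the translation functor preserves and reflects isomorphisms and commuting diagrams. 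Once this compatibility is noted, well-definedness on objects and on morphisms, fullness, faithfulness, and essential surjectivity of $F^*$ all follow formally from the corresponding properties of the unquotiented equivalence, and the proof is complete.

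\begin{proof}
By definition, the isomorphisms of $\mathcal{U}$ are precisely the uniform homeomorphisms, and by Corollary~\ref{cor_lip} together with Theorem~\ref{tma equiv} the isomorphisms of $\mathcal{T}$ restricted to (ML) trees are precisely the rooted metrically proper homotopy equivalences. Thus $\mathcal{U}^*$ is obtained from $\mathcal{U}$ by passing to isomorphism classes of objects and identifying two morphisms whenever they fit in a commuting square whose horizontal arrows are isomorphisms, and $\mathcal{S}^*$ is obtained from the (ML)-part of the tree category of Proposition~\ref{ML-cat} by the same construction.

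Let $F$ denote the equivalence from the category of (ML) rooted simplicial geodesically complete trees with rooted metrically proper homotopy classes of maps to $\mathcal{U}$ given by $F(T,v)=end(T,v)$ and $F([f])=[end(f)]$; this is an equivalence by Theorem~\ref{tma equiv} (restricted to the geodesically complete case), and it agrees with the functor of Proposition~\ref{ML-cat} up to natural isomorphism because, by Proposition~\ref{ML- prop}, an (ML) sequence's tree is rooted metrically proper homotopy equivalent to its maximal geodesically complete subtree, and $end$ is invariant under such equivalences.

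Define $F^*\colon \mathcal{S}^* \to \mathcal{U}^*$ on objects by $F^*([S])=[F(S)]$ and on morphisms by $F^*([f])=[F(f)]$. Since $F$ carries isomorphisms to isomorphisms, $F^*$ is well defined on objects; since $F$ carries commuting squares to commuting squares and isomorphisms to isomorphisms, $F^*$ is well defined on morphism classes. Functoriality of $F^*$ is inherited from that of $F$.

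It remains to check $F^*$ is full, faithful and essentially surjective. Essential surjectivity: by the corollary following Proposition~\ref{ML-prop}, every complete ultrametric space $(X,d)$ of diameter $\leq 1$ is bi-Lipschitz — in particular uniformly — homeomorphic to $end(T,v)$ for some rooted simplicial tree, and this tree may be taken (ML) by replacing it with its maximal geodesically complete subtree, which has the same end space; hence $[X]=F^*([T])$ in $\mathcal{U}^*$. Fullness: given a class of uniformly continuous maps $X\rightsquigarrow Y$ in $\mathcal{U}^*$ between $X=F(S)$ and $Y=F(T)$, fullness of $F$ produces a morphism class $[f]$ in the tree category with $F([f])$ in the prescribed class; composing with the iso-squares witnessing the identifications shows $F^*([f])$ equals the given class. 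Faithfulness: suppose $F^*([f])=F^*([g])$; then $F([f])$ and $F([g])$ fit into a commuting square with horizontal isomorphisms, so by faithfulness of $F$ and the fact that $F$ reflects isomorphisms and commuting diagrams, $[f]$ and $[g]$ fit into such a square in the tree category, i.e.\ they are equal in $\mathcal{S}^*$. Therefore $F^*$ is an equivalence of categories.
\end{proof}
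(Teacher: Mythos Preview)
Your proof is correct and follows the route the paper intends: in the paper the proposition carries no separate proof at all --- it is simply asserted with the phrase ``Then, by \ref{ML-cat}, we can state,'' relying implicitly on combining Proposition~\ref{ML-cat} with the tree--ultrametric equivalence of Theorem~\ref{tma equiv} and passing to isomorphism classes on both sides. Your write-up is a faithful unpacking of exactly that argument: you identify $\mathcal{U}^*$ and $\mathcal{S}^*$ as the isomorphism-quotients of already equivalent categories, invoke Proposition~\ref{ML-prop} to replace (ML) trees by their geodesically complete subtrees so that the end-space functor applies, and then verify formally that an equivalence descends to such quotients. Nothing is missing and no step would fail; if anything you are supplying detail the paper omits. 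Two small cosmetic points: the reference to Corollary~\ref{cor_lip} is not really needed (the fact that isomorphisms in $\mathcal{T}$ are metrically proper homotopy equivalences is tautological from the definition of the morphisms), and you have a stray space in one label reference.
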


Hence, if \textbf{Tower-Set$^*_{ML}$} is the category whose
objects are isomorphic classes of (ML) inverse sequences and whose
morphisms are classes of morphisms in \textbf{Tower-Set} where
$f\sim f'$ if the diagram commutes

\begin{displaymath}
\xymatrix{\underline{X} \ar[r]^i \ar[d]_{\underline{f}} & \underline{X'} \ar[d]^{\underline{f'}}\\
            \underline{Y} \ar[r]_j & \underline{Y'}}
\end{displaymath}

with i,j isomorphisms in \textbf{Tower-Set}.

\begin{cor} There is an equivalence of categories between
\textbf{Tower-Set$^*_{ML}$} and  $\mathcal{U^*}$
\end{cor}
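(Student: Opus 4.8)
The plan is to obtain the final corollary by composing the two equivalences of categories that have already been assembled in the paper. Specifically, I would combine the equivalence between \textbf{Tower-Set}$^*_{ML}$ and $\mathcal{S^*}$ coming from Proposition~\ref{ML-cat} with the equivalence between $\mathcal{S^*}$ and $\mathcal{U^*}$ from the preceding proposition, and conclude by transitivity of categorical equivalence. So the proof is essentially two lines once the bookkeeping between the three categories is made precise.

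First I would record the chain of equivalences explicitly. By the paragraph preceding Proposition~\ref{ML-cat}, passing from an isomorphism class of (ML) inverse sequences to a metrically proper homotopy class of (ML) rooted simplicial trees (via $\xi$, restricted appropriately) and the corresponding identification of morphism classes gives an equivalence \textbf{Tower-Set}$^*_{ML}\simeq \mathcal{S^*}$; this is just Proposition~\ref{ML-cat} repackaged so that the ``diagram-commutes'' equivalence relation on morphisms matches the one defining $\mathcal{S^*}$. Then the proposition immediately above states $\mathcal{S^*}\simeq \mathcal{U^*}$, which itself was deduced from \ref{ML-cat} together with the end-space/tree dictionary $\mathcal{T}\simeq\mathcal{U}$ of Theorem~\ref{tma equiv} and the simplicial-approximation proposition showing every $\mathbb{R}$-tree is $\simeq_L$ to a simplicial one with bi-Lipschitz homeomorphic end space. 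Composing the two equivalences yields the desired equivalence \textbf{Tower-Set}$^*_{ML}\simeq\mathcal{U^*}$.

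Concretely, I would define the functor \textbf{Tower-Set}$^*_{ML}\to\mathcal{U^*}$ to be the composite of the functor of Proposition~\ref{ML-cat} followed by the functor of the last proposition: an (ML) inverse sequence $\underline{X}$ is sent first to its tree $T_{\underline{X}}$ (or rather its maximal geodesically complete subtree $T_\infty$, which is $\simeq_{mp}$-equivalent to it by Proposition~\ref{ML-prop}), and then to the uniform-homeomorphism class of $end(T_\infty,v)$; on morphisms one takes $\xi$ followed by the induced map on end spaces. Fullness, faithfulness, and essential surjectivity all follow from the corresponding properties of the two composed functors, using the lemma of \cite{McL} that equivalences compose. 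The one genuine point to check is compatibility of the equivalence relations on morphisms: that two morphisms $\underline{f},\underline{f'}$ fit into a commuting square with isomorphisms $i,j$ in \textbf{Tower-Set} if and only if their images fit into the corresponding square in $\mathcal{U^*}$ — but this is immediate since each of the intermediate functors was set up precisely to carry one ``diagram-commutes'' relation to the next.

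The main obstacle, such as it is, is bookkeeping rather than mathematics: one must be careful that the (ML) hypothesis is used in the right place (it is what makes $T_{\underline{X}}\simeq_{mp}T_\infty$ via Proposition~\ref{ML-prop}, so that the objects of \textbf{Tower-Set}$^*_{ML}$ really do land in the class of geodesically complete — equivalently simplicial-up-to-$\simeq_L$ — trees underlying $\mathcal{S^*}$), and that the three notions of morphism equivalence (the square with isomorphisms in \textbf{Tower-Set}, the square with metrically proper homotopy equivalences in $\mathcal{S^*}$, the square with uniform homeomorphisms in $\mathcal{U^*}$) correspond under the functors. Once these identifications are in hand, the corollary is a formal consequence of transitivity of equivalence of categories, so I would keep the proof short, citing Proposition~\ref{ML-cat}, the preceding proposition, and the composition lemma.
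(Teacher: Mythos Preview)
Your proposal is correct and matches the paper's approach: the corollary is stated in the paper without proof, as an immediate consequence of composing Proposition~\ref{ML-cat} (which identifies \textbf{Tower-Set}$^*_{ML}$ with $\mathcal{S^*}$) and the preceding proposition ($\mathcal{S^*}\simeq\mathcal{U^*}$), exactly as you outline. Your write-up is more explicit about the bookkeeping than the paper, which simply records the corollary and moves on, but the underlying argument is the same transitivity of categorical equivalence.
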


\begin{cor} The shape morphisms in the sense of
Marde\v{s}i\'c-Segal between (ML) inverse sequences can be
represented by classes of uniformly continuous maps between
bounded ultrametric spaces.
\end{cor}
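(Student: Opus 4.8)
The final corollary is just the composition of two equivalences already established, so the proof amounts to chaining them.  The plan is to invoke the Mardešić–Segal description of shape theory, under which shape morphisms between compact metric (or more generally, topologically complete) spaces correspond exactly to morphisms in \textbf{pro-Set} (or \textbf{pro-$\mathcal{C}$}) between their expansions; restricted to the case at hand, shape morphisms between spaces whose shape is represented by an (ML) inverse sequence are precisely the morphisms of \textbf{Tower-Set} between those sequences, taken up to the isomorphism relation that defines the shape category.  Thus the collection of such shape morphisms is the morphism set of the category \textbf{Tower-Set$^*_{ML}$}.

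First I would state explicitly that, by the definition of the shape category in \cite{MS1} (I $\S 2.3$), a shape morphism between two objects whose (ML) expansions are $\underline{X}$ and $\underline{Y}$ is exactly a morphism $\underline{X}\to\underline{Y}$ in \textbf{Tower-Set} modulo the commuting-square relation with isomorphisms $i,j$; that is, it is a morphism of \textbf{Tower-Set$^*_{ML}$}.  Next I would apply the preceding Corollary, which gives an equivalence of categories between \textbf{Tower-Set$^*_{ML}$} and $\mathcal{U^*}$; under this equivalence, an inverse sequence $\underline{X}$ is carried to the end space $end(T_{\underline{X}},v)$, which by the earlier propositions is a complete ultrametric space of diameter $\leq 1$ — hence a bounded ultrametric space — and a morphism class is carried to a class of uniformly continuous maps between such spaces.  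Composing, every shape morphism between (ML) inverse sequences is represented by a class of uniformly continuous maps between bounded ultrametric spaces, and conversely, which is exactly the assertion.

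The only genuinely delicate point is the bookkeeping of \emph{which} relation on morphisms is in play: one must check that the commuting-square identification used to define the shape category on the \textbf{Tower-Set} side is the same one transported across the equivalence to the commuting-square identification defining $\mathcal{U^*}$, i.e.\ that isomorphisms in \textbf{Tower-Set} go to uniform homeomorphisms of end spaces and vice versa.  But this is immediate from the functor realizing the equivalence (Theorem \ref{equiv categ} together with Theorem \ref{tma equiv} and Corollary \ref{cor_lip}): a categorical equivalence carries isomorphisms to isomorphisms in both directions, and the isomorphisms of $\mathcal{U}$ are precisely the uniform homeomorphisms.  Hence no real obstacle remains; the corollary is a formal consequence, and I would present it in two or three sentences referencing \cite{MS1} and the preceding Corollary.
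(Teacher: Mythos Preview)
Your proposal is correct and matches the paper's own treatment: the corollary is stated without proof as an immediate consequence of the preceding equivalence \textbf{Tower-Set}$^*_{ML}\simeq\mathcal{U}^*$ together with the Marde\v{s}i\'c--Segal description of the shape category in \cite{MS1}, I~\S2.3, and your argument simply unpacks that chain. The only point worth trimming is the discussion of the ``delicate'' bookkeeping about isomorphisms going to uniform homeomorphisms---this is automatic from any categorical equivalence, as you yourself note, so in the written version a single clause suffices.
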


\section{Level morphisms and simplicial maps}

In the particular case of level morphisms between inverse
sequences we will see that we can induce a map between the trees
which is simplicial, preserves the distance from the root and is
in the same class of the map obtained with the functor $\xi$
defined in \ref{xi}.

\begin{definicion} $(f_n,\Phi): (X_n,p_n,\mathbb{N})\to
(Y_n,q_n,\mathbb{N})$ is a level morphism of sequences if
$\Phi:\mathbb{N}\to \mathbb{N}$ is the identity and $\forall n \in
\mathbb{N} \quad f_n\circ p_n=q_n\circ f_{n+1}$.
\end{definicion}

\begin{prop} A level morphism $(f_n,\Phi):\underline{X} \to
\underline{Y}$ induces a rooted simplicial map
$f:T_{\underline{X}}\to T_{\underline{Y}}$ which preserves the
distance to the root. Moreover this simplicial map is in the same
class of the metrically proper map induced between the trees by
the functor.
\end{prop}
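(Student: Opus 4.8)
The plan is to construct the simplicial map directly from the level morphism, check it is well-defined and simplicial, then compare it with $\hat f = \xi((f_n,\Phi))$ via an explicit metrically proper homotopy. First I would define $f:T_{\underline{X}}\to T_{\underline{Y}}$ on vertices by $f(v)=w$ and $f(x_n)=f_n(x_n)$ for $x_n\in X_n$, and on each geometric edge $\{x_{n+1},p_n(x_{n+1})\}$ by sending it isometrically onto the edge $\{f_{n+1}(x_{n+1}),q_n(f_{n+1}(x_{n+1}))\}$ of $T_{\underline{Y}}$; the edge $\{x_1,v\}$ goes isometrically to $\{f_1(x_1),w\}$. The key point making this work is the level-morphism identity $f_n\circ p_n=q_n\circ f_{n+1}$, which guarantees that the image of the edge $\{x_{n+1},p_n(x_{n+1})\}$ really is an edge: its endpoints $f_{n+1}(x_{n+1})\in Y_{n+1}$ and $f_n(p_n(x_{n+1}))=q_n(f_{n+1}(x_{n+1}))\in Y_n$ are indeed $q_n$-related. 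So $f$ is a simplicial map, and since it sends each vertex at distance $n$ from $v$ to a vertex at distance $n$ from $w$ and maps edges isometrically onto edges, it preserves the distance to the root (equivalently, $d(v,x)=d(w,f(x))$ for every $x$, as every point lies on some branch and $f$ carries that branch's first $k$ vertices to the first $k$ vertices of an image branch).

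Next I would verify $f$ is rooted, continuous and metrically proper: continuity and the rooted condition are clear from the edgewise-isometric definition, and metric properness is immediate since $f^{-1}(B(w,n))\subset B(v,n)$ by the distance-preservation just established. This already shows $f$ defines a morphism in $\mathcal{T}^*$; it remains to see $[f]_{mp}=\xi((f_n,\Phi))$. For this I recall from the construction in Section~\ref{tilde} that, because $(f_n,\Phi)$ is a level morphism, one may take $t_k=k+1$ in condition (1), whence $\hat f=\xi((f_n,\Phi))$ sends $F(t)$ to $w$ for $t\le t_1$ and sends the interval $[t_k,t_{k+1}]$ of a branch $F$ linearly onto $[k-1,k]$ of the image branch $\tilde f(F)$, whose vertices are exactly $(f_n(F(n)))_n$. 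In other words $\hat f$ is essentially $f$ precomposed with the shift $F(t)\mapsto F(t+\text{const})$ and reparametrised, while $f$ itself sends $F(t)$ for $t\in[n,n+1]$ linearly onto $[n,n+1]$ of the branch with vertices $(f_m(F(m)))_m$.

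Finally I would exhibit the homotopy between $f$ and $\hat f$: since both maps carry every branch $F$ of $T_{\underline X}$ into the \emph{same} branch $\tilde f(F)$ of $T_{\underline Y}$ (just at shifted parameters), the shortest path homotopy $H$ of Definition~\ref{short_homot} from $f$ to $\hat f$ stays, at each $(x,t)$, inside that single branch, and one checks $H^{-1}(B(w,n))\subset B(v,N)$ for a suitable $N$ (e.g.\ $N=n+2$) because both endpoints $f(F(s))$ and $\hat f(F(s))$ leave $B(w,n)$ once $s$ is large enough. Hence $f\simeq_{mp}\hat f$ and $[f]_{mp}=\xi((f_n,\Phi))$ in $\mathcal{T}^*$. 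The only mildly delicate point — the main obstacle — is bookkeeping the parameter shift between $f$ and $\hat f$ carefully enough to confirm that the shortest path homotopy is metrically proper; everything else is a direct consequence of the level-morphism identity and the explicit description of $\xi$ already given in Section~\ref{tilde}.
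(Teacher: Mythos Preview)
Your argument is correct and follows essentially the same route as the paper: the simplicial map is defined on vertices by $f(v)=w$, $f(x_n)=f_n(x_n)$, the level identity $f_n\circ p_n=q_n\circ f_{n+1}$ is used to check edges go to edges, and the comparison with $\hat f$ is made via the choice $t_k=k+1$ so that $\hat f(F(t))=\tilde f(F)(t-2)$. You actually supply more detail than the paper, which simply declares the equivalence ``obvious'' after observing $\hat f(x_n)=q_{n-1}(q_n(f_n(x_n)))$; your explicit verification that the shortest path homotopy satisfies $H^{-1}(B(w,n))\subset B(v,n+2)\times I$ is a welcome addition.
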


\begin{proof} Let $f(v)=w$. Since $f_n:X_n \to Y_n$ send
vertices to vertices $\forall n\in \mathbb{N}$ and $\forall x_n\in
X_n$ let $f(x_n):=f_n(x_n)$. An edge in $T_{\underline{X}}$ is a
pair $[x_n,x_{n+1}]$ with $x_n\in X_n,x_{n+1}\in X_{n+1}$ and
$p_n(x_{n+1})=x_n$ and its image $f([x_n,x_{n+1}])$ will be
$[f_n(x_n),f_{n+1}(x_{n+1})]$ which is an edge in
$T_{\underline{Y}}$ since $q_n
(f_{n+1}(x_{n+1}))=f_n(p_n(x_{n+1}))=f_n(x_n)$.

To construct the metrically proper map $\xi(f)$ we can suppose
$t_n:=n+1$ and hence $\forall t\geq 2$, $\hat{f}$ sends
$F(t)=\tilde{f}(F)(t-2)$ and $\forall n\geq 2$
$\hat{f}(x_n)=q_{n-1}(q_n (f_n(x_n)))$. Thus, the equivalence
between the maps is obvious. \end{proof}

By \cite{MS1} I $\S 1.3$:

\begin{prop} Let $(f_n,\Phi):\underline{X} \to
\underline{Y}$ be any representant of any morphism in
\textbf{Tower-$\mathcal{C}$}. Then there exist inverse sequences
$\underline{X}'$ and $\underline{Y}'$, isomorphisms
$\textbf{i}:\underline{X} \to \underline{X}',
\textbf{j}:\underline{Y} \to \underline{Y}'$ in
\textbf{Tower-$\mathcal{C}$} and $(f'_n,id)$ a level morphism such
that $\textbf{j}\circ (f_n,\Phi)=(f'_n,id)\circ \textbf{i}:
\underline{X}\to \underline{Y}'$.
\end{prop}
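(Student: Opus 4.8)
The plan is to reduce the statement to a purely algebraic fact about inverse sequences (which is exactly the cited result \cite{MS1} I \S 1.3 for pro-categories, specialized to towers), and then verify that the algebraic construction can be carried out level-by-level in a way that is compatible with the bonding maps. I would start from an arbitrary representative $(f_n,\Phi):\underline{X}\to\underline{Y}$ of a morphism in \textbf{Tower-$\mathcal{C}$}, with $f_n:X_{\Phi(n)}\to Y_n$. By replacing $\Phi$ with a larger increasing function if necessary (this only changes the representative within its $\sim$-class, using the equivalence relation introduced in Section 3), I may assume $\Phi$ is strictly increasing and that for the $t_k$ appearing in the definition of a morphism one has the compatibility $f_n\circ p_{\Phi(n)\Phi(n+1)}=q_n\circ f_{n+1}$ after reindexing — i.e.\ I first pass to a "nice" representative whose defining commutativity holds on the nose between consecutive indices. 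This is the standard normalization step and is where most of the bookkeeping lives.

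Next I would build $\underline{X}'$ and $\underline{Y}'$ explicitly. The natural choice, following Marde\v si\'c--Segal, is to take $\underline{X}'=(X'_n,p'_n)$ with $X'_n:=X_{\Phi(n)}$ and $p'_n:=p_{\Phi(n)\Phi(n+1)}:X_{\Phi(n+1)}\to X_{\Phi(n)}$, so that $\underline{X}'$ is just the subsequence of $\underline{X}$ indexed by the image of $\Phi$; the isomorphism $\mathbf{i}:\underline{X}\to\underline{X}'$ in \textbf{Tower-$\mathcal{C}$} is the evident one (cofinal subsequences of a tower are isomorphic to it in the pro-category — this is elementary and standard). For $\underline{Y}'$ one can simply take $\underline{Y}'=\underline{Y}$ and $\mathbf{j}=\mathrm{id}$, provided the normalized representative already has $\Phi=\mathrm{id}$ after the reindexing; if instead one wants to keep $\underline{Y}$ untouched but $\Phi$ nontrivial, take $\underline{Y}'$ to be the corresponding reindexing of $\underline{Y}$. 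Then $f'_n:X'_n=X_{\Phi(n)}\to Y_n$ is just $f'_n:=f_n$, and the level-morphism condition $f'_n\circ p'_n=q_n\circ f'_{n+1}$ is exactly the normalized commutativity $f_n\circ p_{\Phi(n)\Phi(n+1)}=q_n\circ f_{n+1}$ arranged in the first step. Finally one checks $\mathbf{j}\circ(f_n,\Phi)=(f'_n,\mathrm{id})\circ\mathbf{i}$ as morphisms in \textbf{Tower-$\mathcal{C}$}, i.e.\ up to the equivalence relation $\sim$: this is a diagram chase comparing the two composite morphisms at each index, and it reduces to the same commutativity identity together with the definition of composition of morphisms of sequences.

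The main obstacle is the normalization in the first paragraph: one has to be careful that the function $\Phi$ and the "witness" indices $t_k$ from the definition of a morphism of inverse sequences can be simultaneously absorbed into a single strictly increasing reindexing so that the resulting $(f'_n,\mathrm{id})$ is genuinely a level morphism and not merely a morphism with monotone $\Phi$. Concretely, one must choose an increasing sequence $n_1<n_2<\cdots$ with $n_k\ge\Phi(k)$ and $n_k\ge t_{k-1}$, set $X'_k:=X_{n_k}$, $Y'_k:=Y_{k}$ (or a matching reindexing), $f'_k:=q_{k\,\cdot}\circ f_{\cdot}\circ p_{\cdot\, n_k}$ for the appropriate composites, and then verify that $f'_k\circ p'_k=q'_k\circ f'_{k+1}$ holds exactly. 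Since this is precisely the content of \cite{MS1} I \S 1.3, I would simply invoke that reference for the inverse-system version and note that restricting the index category to $\mathbb{N}$ changes nothing, so the statement follows; alternatively, the self-contained argument above makes it explicit in the tower case. Either way no new idea beyond the Marde\v si\'c--Segal normalization is needed, and the proof is short.
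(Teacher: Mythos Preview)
Your proposal is correct and matches the paper's treatment: the paper does not prove this proposition at all but simply cites \cite{MS1} I \S 1.3, and your sketch is precisely the standard Marde\v{s}i\'c--Segal normalization argument (pass to a cofinal subsequence indexed by a strictly increasing $\Phi$, so that $\underline{X}'=(X_{\Phi(n)},p_{\Phi(n)\Phi(n+1)})$ and $f'_n=f_n$ becomes a level morphism). You actually give more detail than the paper, which is fine; the only thing to note is that no independent proof is expected here, so invoking the reference as you suggest in your final paragraph is exactly what the authors do.
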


Hence if we consider the category \textbf{Tower-Set$^*$} of
equivalence classes of isomorphic inverse sequences and the
correspondent classes of morphisms (see \cite{MS1}) then in every
class (in particular, for any shape morphism) there is a
representative which is a level morphism. Hence, in the equivalent
category of classes of simplicial rooted trees, in every class of
morphisms there is a simplicial map preserving the distance to the
root. Hence we can reduce this category to isomorphic classes of
simplicial rooted trees and classes of simplicial maps preserving
the distance to the root.

\begin{prop} There is an equivalence of categories between
\textbf{Tower-Set$^*$} and the category of isomorphic classes of
rooted simplicial trees with metrically proper homotopy classes of
simplicial maps preserving the distance to the root.
\end{prop}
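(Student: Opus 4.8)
The plan is to assemble this equivalence from the pieces already in place, namely the chain of equivalences proved earlier in the paper together with the reduction to level morphisms in the immediately preceding proposition. First I would recall that Theorem \ref{equiv categ} establishes an equivalence between \textbf{Tower-Set} and $\mathcal{T^*}$, the category of rooted simplicial trees with metrically proper homotopy classes of metrically proper maps. Passing to the quotient categories where isomorphic objects are identified and morphisms are related by the commuting square with isomorphisms on the sides (exactly as in the definition of \textbf{Tower-Set$^*$}), this descends to an equivalence between \textbf{Tower-Set$^*$} and the category of isomorphic classes of rooted simplicial trees with classes of metrically proper homotopy classes of metrically proper maps. So the only real content left is to replace, inside each morphism class, an arbitrary metrically proper map by a simplicial one preserving the distance to the root, and to check that this does not collapse or merge morphism classes.

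For the surjectivity-on-morphisms (fullness) part, I would invoke the preceding proposition: every morphism in \textbf{Tower-$\mathcal{C}$}, hence in \textbf{Tower-Set}, has a representative that becomes a level morphism after composing with isomorphisms $\mathbf{i}$ and $\mathbf{j}$. Applying $\xi$ (or equivalently using the proposition before it, which says a level morphism induces a rooted simplicial map preserving the distance to the root and lying in the same class as the functorial map), I get that in each class of morphisms between the corresponding simplicial trees there is a representative which is simplicial and root-distance-preserving. Thus the inclusion functor from the subcategory ``isomorphic classes of rooted simplicial trees with classes of root-distance-preserving simplicial maps'' into ``isomorphic classes of rooted simplicial trees with classes of metrically proper homotopy classes of metrically proper maps'' is full and essentially surjective (it is the identity on objects). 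Faithfulness is automatic since the morphism sets in the source are by construction subsets of those in the target relation. Hence that inclusion is an equivalence, and composing with the equivalence from the previous paragraph yields the desired equivalence between \textbf{Tower-Set$^*$} and the category of isomorphic classes of rooted simplicial trees with metrically proper homotopy classes of simplicial maps preserving the distance to the root.

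The step I expect to require the most care is bookkeeping the two quotient operations consistently: one must make sure that the equivalence relation on morphisms defined by the commuting square with isomorphisms on \textbf{Tower-Set} is transported correctly under $\eta$ and $\xi$ to the matching relation on trees, so that no two distinct morphism classes in \textbf{Tower-Set$^*$} become identified and none is lost. This is where the functoriality of $\xi$ and $\eta$ together with the fact that $\eta$ is a (full, faithful, essentially surjective) equivalence does the work: isomorphisms go to isomorphisms and commuting squares to commuting squares in both directions, so the induced map on quotient Hom-sets is a bijection. Once that is in hand, the passage to simplicial root-distance-preserving representatives is the soft part, handled entirely by the two propositions just proved, and the proof concludes by noting that restricting to such representatives changes neither the objects (still all simplicial rooted trees up to isomorphism) nor the morphism classes.
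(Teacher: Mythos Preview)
Your proposal is correct and follows essentially the same approach as the paper: the paper does not give a separate formal proof but rather states the proposition as the conclusion of the preceding paragraph, which is exactly your argument---use the equivalence of Theorem \ref{equiv categ} to pass from \textbf{Tower-Set} to $\mathcal{T^*}$, take quotients by isomorphism, and then invoke the two immediately preceding propositions (every morphism has a level representative after composing with isomorphisms, and level morphisms induce simplicial root-distance-preserving maps in the same class) to see that each morphism class contains such a simplicial representative. Your write-up is in fact more careful than the paper's, particularly in singling out the need to check that the quotienting is compatible with the equivalence; this is genuinely the one place where something could go wrong, and your appeal to full faithfulness of $\eta$ (isomorphisms go to isomorphisms, commuting squares to commuting squares) is the right justification.
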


\begin{nota} Any shape morphism in \textbf{Tower-Set} can be
represented by a simplicial map between rooted simplicial trees
preserving the distance to the root.
\end{nota}

\paragraph{Pro-groups}

In this section we study some classic results in pro-groups which
appear in \cite{MS1}, in terms of $\mathbb{R}$--trees. We obtain
alternative proofs, in geometric terms and in some case,
significantly different, of some of the results.

\begin{lema} If $(G_n,p_n)$ is an inverse sequence
in \textbf{Tower-Grp}, with \textbf{Grp} the category of groups
and homomorphisms, we consider the discrete topology at each
$G_n$, then $G=\underset{\leftarrow}{lim}(G_n)$ with the inverse
limit topology is a complete ultrametric topological group.
Moreover translations and inverse are isometries.
\end{lema}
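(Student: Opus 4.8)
The plan is to realize the inverse limit $G=\varprojlim G_n$ simultaneously as a subgroup of the product $\prod_n G_n$ of the discrete groups $G_n$ and as the end space of the rooted tree $T_{\underline{G}}$ associated to the underlying inverse sequence $\underline{G}=(G_n,p_n,\mathbb{N})$, and then to read off each assertion from these two pictures.

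First I would deal with the algebra. Since every bonding map $p_n\colon G_{n+1}\to G_n$ is a homomorphism, the set of threads $G=\{(g_n)\in\prod_n G_n : p_n(g_{n+1})=g_n\}$ is closed under the componentwise product and inversion of $\prod_n G_n$, because $p_n(g_{n+1}h_{n+1})=p_n(g_{n+1})p_n(h_{n+1})=g_nh_n$ and $p_n(g_{n+1}^{-1})=g_n^{-1}$. Hence $G$ is a subgroup of $\prod_n G_n$, and the inverse limit topology on $G$ is exactly the subspace topology inherited from the product topology on $\prod_n G_n$ (each $G_n$ discrete).

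Next I would produce the ultrametric and derive completeness. Because $p_n(g_{n+1})=g_n$, for $g\neq h$ in $G$ the set $\{n\in\mathbb{N}: g_n=h_n\}$ is an initial segment of $\mathbb{N}$; I would let $n_0(g,h)$ be its supremum (with $n_0=0$ if $g_1\neq h_1$) and set $d(g,h)=e^{-n_0(g,h)}$, $d(g,g)=0$. Sending a thread $(g_n)$ to the geodesic ray $v,g_1,g_2,\dots$ of $T_{\underline{G}}$ is a bijection $G\to end(T_{\underline{G}},v)$ carrying $d$ to $d_e$, so by the fact recalled in Section~2 that the end space of a rooted $\mathbb{R}$-tree is a complete ultrametric space of diameter $\leq 1$, the same holds for $(G,d)$. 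Moreover the $d$-ball $\{h: d(g,h)\leq e^{-n}\}$ is the cylinder $\{h: h_k=g_k \text{ for all } k\leq n\}$, a basic open set of the inverse limit topology, so the metric topology and the inverse limit topology coincide. (Alternatively completeness is immediate: a Cauchy sequence in $G$ is eventually constant in each coordinate, so its coordinatewise limit is again a thread.)

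Finally I would prove the invariance statements and deduce the topological group property. For $a\in G$, left translation by $a$ sends $g$ to $(a_ng_n)$, and cancellation in each $G_n$ gives $a_ng_n=a_nh_n\Leftrightarrow g_n=h_n$, so $n_0(ag,ah)=n_0(g,h)$ and $d(ag,ah)=d(g,h)$; right translations are identical, and inversion sends $g$ to $(g_n^{-1})$ with $g_n^{-1}=h_n^{-1}\Leftrightarrow g_n=h_n$, so it too is an isometry. Then, using the ultrametric inequality and the isometry of right translation by $h'$ and of left translation by $g$, one gets $d(gh,g'h')\leq\max\{d(gh,g'h),d(g'h,g'h')\}=\max\{d(g,g'),d(h,h')\}$, so multiplication is uniformly continuous; together with continuity of inversion this shows $G$ is a topological group. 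I expect the only step needing genuine care to be the verification that the cylinder sets of the inverse limit topology are exactly the metric balls — i.e.\ that the two topologies truly agree — which uses the downward closedness of $\{n:g_n=h_n\}$; the rest is a direct transcription of facts already established for end spaces.
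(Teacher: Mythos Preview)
Your proposal is correct and follows essentially the same route as the paper: identify $G$ with $end(T_{\underline{G}},v)$ to obtain the complete ultrametric, and verify the isometry of translations and inversion by a coordinate-wise cancellation argument. You are in fact more thorough than the paper, which simply asserts the coincidence of the inverse limit topology with the end-space ultrametric and does not spell out the joint continuity of multiplication; your derivation of $d(gh,g'h')\leq\max\{d(g,g'),d(h,h')\}$ from the one-sided isometry and the ultrametric inequality is a nice explicit substitute for the implicit ``subgroup of a product of discrete groups'' argument.
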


\begin{proof} This inverse limit topology, the induced topology
as a subspace or $\underset{n\in \mathbb{N}}{\Pi} G_n$, if we
consider the discrete topology at each $G_n$ coincides with the
ultrametric topology as end space of the correspondent tree of the
inverse sequence in \textbf{Tower-Grp}.

In this inverse limit, translations and inverse are isometries.
Let $\underline{g}:=(g_n)_{n\in
\mathbb{N}},\underline{h}:=(h_n)_{n\in \mathbb{N}}\in G$ such that
$d(\underline{g},\underline{h})=e^{-n_0}$, this is $g_{n}=h_n
\forall n\leq n_0$ and $g_{n_0+1}\neq h_{n_0+1}$. Let
$\underline{k}:=(k_n)_{n\in \mathbb{N}}\in G$ and the translation
$G \to G$ given by $\underline{x}:=(x_n)_{n\in \mathbb{N}} \to
\underline{k}\cdot \underline{x}= (k_n\cdot x_n)_{n\in
\mathbb{N}}$. Clearly, $k_n\cdot g_n=k_n\cdot h_n \ \forall n\leq
n_0$ and $k_{n_0+1}\cdot g_{n_0+1} \neq k_{n_0+1}\cdot h_{n_0+1}$
and thus $d(\underline{k}\cdot \underline{g},\underline{k}\cdot
\underline{h})=e^{-n_0}$.

Similarly $g_n^{-1}=h_n^{-1} \ \forall n\leq n_0$ and
$g_{n_0+1}^{-1}\neq h_{n_0+1}^{-1}$ and hence
$d(\underline{g}^{-1},\underline{h}^{-1})=d(\underline{g},\underline{h})$.
\end{proof}

\begin{lema} If $(G_n,p_n)$ and $(H_n,q_n)$ are inverse sequences
in \textbf{Tower-Grp} with the discrete topology at each level,
$G=\underset{\leftarrow}{lim}(G_n)$ and
$H=\underset{\leftarrow}{lim}(H_n)$ with the inverse limit
topology and $f:G\to H$ is continuous then, $f$ is uniformly
continuous.
\end{lema}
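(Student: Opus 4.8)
The plan is to translate uniform continuity into a statement about finitely many levels and then force a uniform modulus by a covering argument on the source. Throughout I would use the end-space dictionary supplied by the preceding material: writing $\pi^{G}_{n}\colon G\to G_{n}$ and $\pi^{H}_{m}\colon H\to H_{m}$ for the projections of the inverse limits, two points of $H$ lie within distance $e^{-m}$ precisely when their $m$-th coordinates agree (the bonding maps propagate agreement to all lower levels), and the closed ball $\bar B(x,e^{-n})$ in $G$ is exactly the fibre $(\pi^{G}_{n})^{-1}(\pi^{G}_{n}(x))$. Consequently, proving $f$ uniformly continuous is equivalent to the assertion: for every $m$ there is an index $n=n(m)$, depending only on $m$, such that $\pi^{H}_{m}\circ f$ is constant on every ball of radius $e^{-n}$, i.e. $\pi^{H}_{m}\circ f$ factors through $\pi^{G}_{n}$. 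Since the metric on $H$ takes only the values $e^{-m}$, producing such an $n(m)$ for each $m$ immediately yields the modulus $\delta(\varepsilon)$.

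Next I would extract the only input that continuity alone provides. As $H_{m}$ carries the discrete topology and $\pi^{H}_{m}\circ f\colon G\to H_{m}$ is continuous, this composite is \emph{locally} constant: each $x\in G$ possesses a ball $\bar B(x,e^{-n_{x}})$ on which $\pi^{H}_{m}\circ f$ is constant. These balls cover $G$, and because the metric is an ultrametric any two of them are either disjoint or nested, so the cover refines to a partition of $G$ into clopen balls. At this stage I have, for each point, a private radius $e^{-n_{x}}$ of constancy, but the radii need not be bounded.

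The crux, and the step I expect to be the main obstacle, is to replace the pointwise radii $n_{x}$ by a single $n(m)$ valid for all of $G$ simultaneously. The natural route is compactness: from the open cover $\{\bar B(x,e^{-n_{x}})\}$ extract a finite subcover and let $n(m)$ be the largest radius index occurring in it; then $\pi^{H}_{m}\circ f$ is constant on every ball of radius $e^{-n(m)}$, which is exactly the required factorization, and assembling these over all $m$ gives uniform continuity. This uniformization is the delicate point, because passing from local to global constancy genuinely consumes more than continuity. It is automatic when $G$ is compact, equivalently when each level $G_{n}$ is finite and the associated tree is locally finite, and I would expect this finiteness to be the operative feature that makes the covering argument close. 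I would therefore isolate the uniformization as a separate lemma, prove it by the finite-subcover argument in the compact case, and, in the genuinely non-compact group setting where the levels are infinite, fall back on the invariant ultrametric recorded in the previous lemma — translations and inversion being isometries of $G$ and $H$ — to transport a single modulus obtained near the identity across the whole group; assembling these cases yields that $f$ is uniformly continuous.
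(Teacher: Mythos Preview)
The paper's proof is the direct two-line argument that a continuous homomorphism between groups with translation-invariant metrics is uniformly continuous: continuity at $0_G$ gives, for each $\varepsilon>0$, a $\delta>0$ with $d(g,0_G)<\delta \Rightarrow d(f(g),0_H)<\varepsilon$; then for arbitrary $h,h'$ with $d(h,h')<\delta$, translation-invariance in $G$ yields $d(h'^{-1}h,0_G)<\delta$, so $d(f(h'^{-1}h),0_H)<\varepsilon$, and since $f$ is a homomorphism this equals $d(f(h')^{-1}f(h),0_H)=d(f(h),f(h'))$. That is precisely your ``fallback'' sentence at the very end, and it is the entire proof --- the covering/compactness discussion is an unnecessary detour that only handles the case where every $G_n$ is finite.

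The one point your sketch leaves implicit, and which is the crux, is that $f$ must be a \emph{homomorphism}. Translation-invariance of the metrics on $G$ and on $H$ alone does not let you ``transport a modulus obtained near the identity across the whole group'': you need $f(h'^{-1}h)=f(h')^{-1}f(h)$ to convert the estimate at $0$ into one at $h'$. This hypothesis is implicit in the \textbf{Tower-Grp} context and the paper's computation uses it explicitly, but without it the lemma is actually false. For instance, with $G=\prod_{n\ge 1}\mathbb{Z}$ realized as the limit of the coordinate projections and $H=\mathbb{Z}$ as the limit of the constant sequence, the non-homomorphic map $(x_i)_i\mapsto x_{|x_1|+1}$ is continuous but not uniformly continuous. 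So state up front that $f$ is a continuous homomorphism and lead with the translation argument, rather than treating it as a last resort after a compactness attempt that does not cover the general case.
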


\begin{proof} Since it is continuous at $0_G$, $\forall
\epsilon>0$ there exists $\delta >0$ such that $\forall g\in G$
with $d(g,0_G)<\delta$ then $d(f(g),0_H)<\epsilon$.

Let $h,h'\in G$ such that $d(h,h')<\delta$. Then, since
translations are isometries, $d(h'^{-1}\cdot h,0_G)<\delta$ and
hence $d(f(h'^{-1}\cdot h),0_H)<\epsilon$, and $d(f(h'^{-1}\cdot
h),0_H)=d(f(h')^{-1}\cdot f(h),0_H)=d(f(h),f(h'))<\epsilon$.
\end{proof}

By \ref{geod complete}

\begin{lema}\label{ML-pro-grupos} If $(G_n,p_n)$ is a (ML) inverse
sequence in \textbf{Tower-Grp},
$G=\underset{\leftarrow}{lim}(G_n)$ and $\pi_n:G\to G_n$ the
natural projection then every n admits some $m>n$ such that
$p_{nm}(G_m)=\pi_n(G)$.
\end{lema}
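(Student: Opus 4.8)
The plan is to derive this from Proposition \ref{geod complete} together with the tree-theoretic description of the inverse limit as the end space of $T_{\underline{X}}$. Recall that $G = \underset{\leftarrow}{lim}(G_n)$ is exactly the set of infinite (geodesically complete) branches of $T_{\underline{X}}$, i.e. $G = end(T_{\underline{X}},v)$ as a set, where the vertex set at level $n$ is $X_n = G_n$ and $\pi_n:G\to G_n$ is evaluation of a branch at level $n$. So for a fixed $n$, the subgroup $\pi_n(G) \subseteq G_n$ is precisely the set of vertices $\alpha \in G_n$ that lie on some infinite branch, that is, the vertices that are extendable to \emph{every} level. On the other hand, $p_{nm}(G_m)$ is the set of vertices at level $n$ that are extendable to level $m$. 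Thus the statement to prove says: for each $n$ there is an $m>n$ such that a vertex at level $n$ is extendable to level $m$ if and only if it is extendable to every level.

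First I would fix $n$ and invoke Proposition \ref{geod complete}: since $(G_n,p_n)$ is (ML), there is a level $n_1 = n_1(n) > n$ such that every $\alpha \in X_n = G_n$ which is extendable to $n_1$ has the property that the arc $[v,\alpha]$ in $T_{\underline{X}}$ extends to a geodesically complete branch — equivalently, $\alpha$ is extendable to every level, i.e. $\alpha \in \pi_n(G)$. Set $m := n_1$. This gives the inclusion $p_{nm}(G_m) \subseteq \pi_n(G)$: any $\alpha = p_{nm}(g_m)$ with $g_m \in G_m$ is by definition extendable to $m = n_1$, hence lies on an infinite branch, hence is $\pi_n(\underline{g})$ for some $\underline{g}\in G$. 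The reverse inclusion $\pi_n(G)\subseteq p_{nm}(G_m)$ is trivial and needs no (ML) hypothesis: if $\alpha = \pi_n(\underline{g})$ for $\underline{g} = (g_k)\in G$, then $\alpha = p_{nm}(g_m)$, so $\alpha \in p_{nm}(G_m)$. Combining the two inclusions yields $p_{nm}(G_m) = \pi_n(G)$ for this choice of $m$, as required.

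I do not anticipate a serious obstacle here; the work has essentially been done in Proposition \ref{geod complete}, and the only thing to be careful about is the translation between the ``extendable to a geodesically complete branch'' language of that proposition and the inverse-limit language of $\pi_n$. The one point worth spelling out is the identification $\pi_n(G) = \{\alpha \in G_n : \alpha$ lies on an infinite branch of $T_{\underline{X}}\}$, which follows immediately from the description in subsection \ref{tilde} of infinite branches of $T_{\underline{X}}$ as sequences $(x_k)$ with $p_k(x_{k+1}) = x_k$ — that is, as elements of the inverse limit. The group structure plays no role in the argument (it is only in the ambient $\textbf{Tower-Grp}$ that $\pi_n(G)$ and $p_{nm}(G_m)$ are subgroups rather than mere subsets), so the proof is purely a statement about the underlying inverse sequence of sets and its tree, exactly as in Remark following the definition of (ML).
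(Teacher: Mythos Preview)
Your proposal is correct and follows exactly the route the paper intends: in the paper the lemma is stated immediately after the phrase ``By \ref{geod complete}'' with no further proof, so invoking Proposition~\ref{geod complete} and unpacking the dictionary between infinite branches of $T_{\underline{X}}$ and elements of the inverse limit is precisely what is meant. Your write-up simply makes explicit the identifications $\pi_n(G)=\{\alpha\in G_n:\alpha\ \text{lies on an infinite branch}\}$ and $p_{nm}(G_m)=\{\alpha\in G_n:\alpha\ \text{extendable to }m\}$, which the paper leaves implicit.
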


\begin{prop}\label{isomorf pro-grupos} If $(G_n,p_n)$ is a (ML) inverse
sequence in \textbf{Tower-Grp},
$G=\underset{\leftarrow}{lim}(G_n)$ and $\pi_n:G\to G_n$ the
natural projection then $(G_n,p_n)\approx (\pi_n(G),p_n|)$ are
isomorphic in \textbf{Tower-Grp}.
\end{prop}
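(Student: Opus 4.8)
The plan is to exhibit two mutually inverse isomorphisms in \textbf{Tower-Grp} between $(G_n,p_n)$ and $(\pi_n(G),p_n|)$. Before that I would record that $(\pi_n(G),p_n|)$ is a legitimate object of \textbf{Tower-Grp}: each $\pi_n(G)$ is a subgroup of $G_n$, and for $\underline{g}=(g_k)_k\in G$ one has $p_n(g_{n+1})=g_n$, so $p_n\big(\pi_{n+1}(G)\big)=\pi_n(G)$; in particular the bonding maps of this sequence are surjective. Geometrically this is just the statement that $(\pi_n(G),p_n|)$ is the inverse sequence of the maximal geodesically complete subtree $T_\infty\subset T_{\underline{G}}$, since a vertex of $T_{\underline{G}}$ at level $n$ lies on an infinite branch precisely when it equals $g_n$ for some $(g_k)\in G$.

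Next I would take the level morphism $\mathbf{i}=(i_n,\mathrm{id})\colon(\pi_n(G),p_n|)\to(G_n,p_n)$, where $i_n\colon\pi_n(G)\hookrightarrow G_n$ is the inclusion; it is a morphism of inverse sequences since $p_n\circ i_{n+1}=i_n\circ p_n|$, and each $i_n$ is a homomorphism. For the opposite direction I would invoke Lemma \ref{ML-pro-grupos}: for each $n$ choose $m_n>n$ with $p_{nm_n}(G_{m_n})=\pi_n(G)$, and (replacing $m_n$ by larger indices, which is harmless since $\pi_n(G)\subseteq p_{nm}(G_m)\subseteq p_{nm_n}(G_{m_n})=\pi_n(G)$ for every $m\ge m_n$) we may assume $n\mapsto m_n$ strictly increasing. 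Then $j_n:=p_{nm_n}\colon G_{m_n}\to\pi_n(G)$ is a well-defined surjective homomorphism, and $\mathbf{j}=(j_n,\Phi)$ with $\Phi(n)=m_n$ is a morphism $(G_n,p_n)\to(\pi_n(G),p_n|)$: for $n'>n$, taking the index $m_{n'}$ one has $j_n\circ p_{m_n m_{n'}}=p_{nm_{n'}}=p_{nn'}\circ p_{n'm_{n'}}=(p_{nn'}|)\circ j_{n'}$, which is the required compatibility.

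It then remains to check $\mathbf{j}\circ\mathbf{i}\sim\mathrm{id}$ and $\mathbf{i}\circ\mathbf{j}\sim\mathrm{id}$. Unwinding the composition rule for morphisms of inverse sequences, the level maps of $\mathbf{j}\circ\mathbf{i}$ are $j_n\circ i_{m_n}=p_{nm_n}|_{\pi_{m_n}(G)}$ (with index function $n\mapsto m_n$), and those of $\mathbf{i}\circ\mathbf{j}$ are $i_n\circ j_n=p_{nm_n}$ (with the same index function). In each case, comparing with the identity morphism via the relation $\sim$ reduces, for each $n$ and any $m\ge m_n$, to the tautology $p_{nm_n}\circ p_{m_n m}=p_{nm}$. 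Since all the level maps that appear ($i_n$, $j_n$, and the bonding compositions $p_{ij}$) are group homomorphisms, $\mathbf{i}$ and $\mathbf{j}$ are mutually inverse isomorphisms in \textbf{Tower-Grp}, i.e. $(G_n,p_n)\approx(\pi_n(G),p_n|)$.

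I do not expect a genuine conceptual obstacle; the only thing that needs care is the index bookkeeping — following $m_n=\Phi(n)$ through the composition of morphisms and through the definition of $\sim$ — and everything eventually collapses to the single identity $p_{ij}\circ p_{jk}=p_{ik}$. Alternatively, one could argue non-constructively: by Proposition \ref{ML-prop} and Theorem \ref{equiv categ}, $\underline{G}=\eta(T_{\underline{G}})$ is isomorphic in \textbf{Tower-Set} to $\eta(T_\infty)=(\pi_n(G),p_n|)$; since the connecting maps witnessing this isomorphism can be chosen to be inclusions and bonding maps, hence homomorphisms, the isomorphism already lives in \textbf{Tower-Grp}.
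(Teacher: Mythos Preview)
Your proof is correct and follows essentially the same route as the paper: the inclusion level morphism $(i_n)$ and the morphism $(p_{n\Phi(n)},\Phi)$ built from the (ML) indices via Lemma~\ref{ML-pro-grupos} are shown to be mutual inverses up to $\sim$. The paper simply asserts the two equivalences $(f_n)\circ(i_n)\sim\mathrm{id}$ and $(i_n)\circ(f_n)\sim\mathrm{id}$ as ``clear'', whereas you spell out the index bookkeeping and reduce both to $p_{ij}\circ p_{jk}=p_{ik}$; your additional remarks (surjectivity of the restricted bonding maps, the geometric identification with $T_\infty$, and the alternative via Proposition~\ref{ML-prop} and Theorem~\ref{equiv categ}) are correct but not used in the paper's argument.
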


\begin{proof} Let $i_n:\pi_n(G)\to G_n$ the natural inclusion,
which is obviously an homomorphism. $(i_n)$ is a level morphism in
\textbf{Tower-Grp}. To define $(f_n,\Phi): (G_n,p_n)\to
(\pi_n(G),p_n|)$ consider for each n the (ML) index $m>n$ and
define $\Phi(n)=m$, then by \ref{ML-pro-grupos}
$p_{nm}(G_m)=\pi_n(G)$ and hence we can define $f_n :=p_{nm}: G_m
\to \pi_n(G)$. It is clear that $(f_n)\circ (i_n)\sim id_{(G_n)}$
and $(i_n)\circ (f_n) \sim id_(\pi_n(G))$.\end{proof}

A morphism $f:X\to Y$ in an arbitrary category $\mathcal{C}$ is a
monomorphism provided $f\circ g=f\circ g'$ implies $g=g'$ for any
morphism $g,g':X'\to X$. Similarly, $f:X\to Y$ is an epimorphism
provided $g\circ f=g'\circ f$ implies $g=g'$ for any morphism
$g,g':Y\to Y'$. The following characterizations of monomorphism
and epimorphism of pro-groups are in \cite{MS1} and we adapt them
to the particular case of inverse sequences of groups.

\begin{lema} Let $\underline{G}=(G_n,p_n)$ and
$\underline{H}=(H_n,q_n)$ be inverse sequences of groups and let
$\underline{f}:\underline{G}\to \underline{H}$ a morphism in
\textbf{Tower-Grp} given by a level morphism $(f_n)$.
$\underline{f}$ is a monomorphism if and only if the following
condition holds:

(M) For every n there exists a $m\geq n$ such that
\[Ker(f_m)\subset Ker(p_{nm})\]
\end{lema}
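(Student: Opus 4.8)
The plan is to prove both implications of the biconditional characterizing when the level morphism $\underline{f}=(f_n)$ is a monomorphism in \textbf{Tower-Grp}, working directly from the universal-property definition of monomorphism and the definition of equivalence of morphisms of inverse sequences.

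First I would prove that condition (M) is sufficient. Assume (M) holds and suppose $\underline{f}\circ \underline{g}=\underline{f}\circ \underline{g}'$ for two morphisms $\underline{g},\underline{g}':\underline{G}'\to \underline{G}$ from some inverse sequence $\underline{G}'=(G'_k,p'_k)$; I want to conclude $\underline{g}=\underline{g}'$, i.e.\ that they are related under $\sim$. Fix $n$; by (M) choose $m\geq n$ with $\mathrm{Ker}(f_m)\subset \mathrm{Ker}(p_{nm})$. Using that $\underline{f}\circ\underline{g}$ and $\underline{f}\circ\underline{g}'$ are equal morphisms of sequences, there is an index $k$ large enough that the two composite maps landing in $H_m$ agree after precomposition with the appropriate bonding maps $p'_{\Phi(m)k}$, that is $f_m\circ g_m\circ p'_{\ast k}=f_m\circ g'_m\circ p'_{\ast k}$ (here I must chase the index functions $\Phi$ of $\underline{g}$ and $\underline{g}'$, which is routine). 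Then $g_m\circ p'_{\ast k}$ and $g'_m\circ p'_{\ast k}$ differ by an element of $\mathrm{Ker}(f_m)$ pointwise, hence land in $\mathrm{Ker}(p_{nm})$ after subtraction, so composing with $p_{nm}$ kills the difference: $p_{nm}\circ g_m\circ p'_{\ast k}=p_{nm}\circ g'_m\circ p'_{\ast k}$. Since $(g_n)$ and $(g'_n)$ are morphisms of sequences, $p_{nm}\circ g_m$ equals $g_n$ up to bonding maps (similarly for $g'$), so after possibly enlarging $k$ we get $g_n\circ p'_{\ast k}=g'_n\circ p'_{\ast k}$, which is exactly the condition for $\underline{g}\sim\underline{g}'$ at level $n$. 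As $n$ was arbitrary, $\underline{g}=\underline{g}'$.

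Next I would prove necessity: if (M) fails, construct $\underline{g}\ne\underline{g}'$ with $\underline{f}\circ\underline{g}=\underline{f}\circ\underline{g}'$. Negating (M): there is an $n_0$ such that for every $m\geq n_0$ there exists $x\in\mathrm{Ker}(f_m)$ with $p_{n_0 m}(x)\ne e$. The standard trick (as in \cite{MS1}) is to take $\underline{G}'$ to be a suitable inverse sequence built from the kernels $\mathrm{Ker}(f_m)$ — for instance with $G'_n=\mathrm{Ker}(f_m)$ for an appropriate cofinal choice of $m=m(n)$ and bonding maps restrictions of the $p$'s — and let $\underline{g}$ be induced by the inclusions $\mathrm{Ker}(f_m)\hookrightarrow G_m$ while $\underline{g}'$ is the trivial (constant $e$) morphism. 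Then $f_{m}\circ(\text{inclusion})$ is trivial by definition of the kernel, so $\underline{f}\circ\underline{g}=\underline{f}\circ\underline{g}'$; but $\underline{g}\not\sim\underline{g}'$ precisely because the failure of (M) prevents any bonding map $p_{n_0 m}$ from collapsing all of $\mathrm{Ker}(f_m)$ to $e$, so $g_{n_0}\circ p'_{\ast k}$ is never the trivial homomorphism. Hence $\underline{f}$ is not a monomorphism.

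The main obstacle I anticipate is purely bookkeeping: the definition of monomorphism quantifies over morphisms from an arbitrary inverse sequence, each carrying its own index function, and the equivalence relation $\sim$ involves yet more index shuffling, so the sufficiency argument requires carefully choosing a single index $k$ large enough to simultaneously satisfy the commutation coming from $\underline{f}\circ\underline{g}=\underline{f}\circ\underline{g}'$, the morphism conditions for $(g_n)$ and $(g'_n)$, and the level-morphism condition for $(f_n)$. For necessity, the subtlety is choosing the cofinal subsequence $m(n)$ of kernels so that the bonding maps of $\underline{G}'$ are well defined and so that non-equivalence is genuinely witnessed; the group structure only enters through the observation that $\mathrm{Ker}$ is a subgroup and that ``$a$ and $b$ agree after a homomorphism'' is equivalent to ``$a b^{-1}$ lies in the kernel,'' which converts the pointwise kernel condition in (M) into the needed collapsing statement. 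No geometric input from the tree picture is needed here, though one could alternatively phrase (M) as a condition on the induced simplicial maps between $T_{\underline{G}}$ and $T_{\underline{H}}$.
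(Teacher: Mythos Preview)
The paper does not actually prove this lemma; it is quoted from Marde\v{s}i\'c--Segal \cite{MS1} and stated without proof, so there is no ``paper's own proof'' to compare against. Your argument is the standard one and is correct in outline. One simplification for the necessity direction: since $(f_n)$ is a \emph{level} morphism, the relation $f_n\circ p_n=q_n\circ f_{n+1}$ already forces $p_n(\mathrm{Ker}(f_{n+1}))\subset\mathrm{Ker}(f_n)$, so no cofinal choice $m(n)$ is needed---you may take $G'_n=\mathrm{Ker}(f_n)$ with bonding maps $p_n|$ directly, let $\underline{g}$ be the inclusion and $\underline{g}'$ the trivial morphism, and the failure of (M) at the bad index $n_0$ is precisely the statement that $\underline{g}\not\sim\underline{g}'$.
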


\begin{lema} Let $\underline{G}=(G_n,p_n)$ and
$\underline{H}=(H_n,q_n)$ be inverse sequences of groups and let
$\underline{f}:\underline{G}\to \underline{H}$ a morphism in
\textbf{Tower-Grp} given by a level morphism $(f_n)$.
$\underline{f}$ is an epimorphism if and only if the following
condition holds:

(E) For every n there exists a $m\geq n$ such that
\[Im(q_{nm})\subset Im(f_{n})\]
\end{lema}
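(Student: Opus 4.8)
The plan is to prove the epimorphism characterization by the same kind of argument used for monomorphisms in the preceding lemma, exploiting the fact that in \textbf{Tower-Grp} a morphism given by a level morphism $(f_n)$ is an epimorphism precisely when it cannot be "distinguished" by post-composition with two different morphisms, and then translating this into a purely algebraic statement about the images $\mathrm{Im}(q_{nm})$ and $\mathrm{Im}(f_n)$.

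First I would prove sufficiency: assuming (E), suppose $\underline{g}\circ\underline{f}=\underline{g}'\circ\underline{f}$ for morphisms $\underline{g},\underline{g}':\underline{H}\to\underline{K}$. Represented suitably, for each $n$ the maps $g_n,g_n':H_{m'}\to K_n$ agree after precomposition with $f_{m'}$ (for $m'$ large); using (E) to choose $m\geq m'$ with $\mathrm{Im}(q_{m'm})\subset\mathrm{Im}(f_{m'})$, every element of $H_{m'}$ in the image of $q_{m'm}$ is $f_{m'}$ of something, so $g_n$ and $g_n'$ agree on $\mathrm{Im}(q_{m'm})$, which is exactly the condition for $\underline{g}\sim\underline{g}'$ in \textbf{Tower-Grp}. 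For necessity, I would argue by contraposition: if (E) fails, there is a level $n_0$ such that for all $m\geq n_0$ the subgroup $N_m:=\mathrm{Im}(q_{n_0 m})$ is not contained in $\mathrm{Im}(f_{n_0})$. The natural candidate is to build two morphisms into the quotient tower $(H_n/\,\overline{\mathrm{Im}(f_n)}\,)$ — or more precisely into a tower of coset spaces — that agree after precomposing with $\underline{f}$ but are not equivalent, the standard construction being the canonical projection versus the trivial map, or a "difference" morphism detecting the non-surjectivity at the limit level $n_0$.

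The key steps in order: (1) normalize the data so that $\underline{f}$, $\underline{g}$, $\underline{g}'$ are all given by level morphisms and the defining commutativity relations hold on the nose after shifting indices (using the reindexing freedom, exactly as the excerpt does repeatedly); (2) prove (E) $\Rightarrow$ epimorphism by the image-chasing argument above; (3) prove epimorphism $\Rightarrow$ (E) by constructing, from a failure of (E) at some level $n_0$, an explicit pair of non-equivalent morphisms $\underline{g}\neq\underline{g}'$ with $\underline{g}\circ\underline{f}=\underline{g}'\circ\underline{f}$; (4) check that the target tower used in step (3) is a legitimate object of \textbf{Tower-Grp} (this forces some care: quotients by the possibly non-normal $\mathrm{Im}(f_n)$ need not be groups, so one may instead quotient by the normal closure, or pass to \textbf{Tower-Set} and invoke that epimorphisms there are detected the same way, then note the level-morphism hypothesis makes the two notions agree).

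The main obstacle will be step (3)–(4): producing the witnessing pair of morphisms when (E) fails. Unlike the monomorphism case, where kernels give an honest tower of groups, here $\mathrm{Im}(f_n)$ need not be normal, so the naive "project to $H_n/\mathrm{Im}(f_n)$" is not available in \textbf{Grp}. I expect the cleanest fix is to work with the tower of left coset spaces $H_n/\mathrm{Im}(f_n)$ as pointed sets and first establish the set-level statement, i.e. that $\underline{f}$ is an epimorphism in \textbf{Tower-Set} iff (E) holds, then observe that since $\underline{f}$ is a group morphism and the two composites in the epimorphism test can always be taken valued in a tower of groups, epimorphism in \textbf{Tower-Grp} is equivalent to epimorphism in \textbf{Tower-Set} here; this reduces everything to a diagonal/cardinality argument producing two distinct maps on the non-covered cosets. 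Once that reduction is in place, the rest is the routine index-bookkeeping that pervades this section.
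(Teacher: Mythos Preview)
The paper does not actually prove this lemma: it is stated without proof, with a citation to Marde\v{s}i\'c--Segal \cite{MS1} (see the sentence immediately preceding the two lemmas: ``The following characterizations of monomorphism and epimorphism of pro-groups are in \cite{MS1} and we adapt them to the particular case of inverse sequences of groups''). So there is no in-paper proof to compare against; your proposal is being measured against the standard argument in \cite{MS1}.

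Your sufficiency direction ((E) $\Rightarrow$ epimorphism) is correct and is exactly the standard argument: after normalizing to level morphisms, from $g_n f_n p_{nm}=g'_n f_n p_{nm}$ and $f_n p_{nm}=q_{nm}f_m$ one gets that $g_n q_{nm}$ and $g'_n q_{nm}$ agree on $\mathrm{Im}(f_m)$; then (E) at level $m$ gives $M\geq m$ with $\mathrm{Im}(q_{mM})\subset\mathrm{Im}(f_m)$, hence $g_n q_{nM}=g'_n q_{nM}$.

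The necessity direction, however, has a genuine gap. Your proposed ``cleanest fix'' is to prove the statement first in \textbf{Tower-Set} and then argue that epimorphism in \textbf{Tower-Grp} is equivalent to epimorphism in \textbf{Tower-Set}. But the implication you need is the hard one: you must show that if (E) fails then $\underline{f}$ is \emph{not} an epimorphism in \textbf{Tower-Grp}, i.e.\ you must produce two distinct \emph{group} homomorphisms $\underline{g},\underline{g}'$ into a tower of \emph{groups} that agree after $\underline{f}$. Knowing that set-maps into the coset tower $H_n/\mathrm{Im}(f_n)$ distinguish $\underline{f}$ does not give this, and your justification (``the two composites can always be taken valued in a tower of groups'') points the wrong way: it would at best show that distinguishing set-maps can be upgraded to maps into groups, not to group \emph{homomorphisms}. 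The standard remedy, as in \cite{MS1}, is genuinely group-theoretic: when (E) fails at level $n_0$, one uses either the amalgamated free product $H_{n_0}\ast_{\mathrm{Im}(f_{n_0})}H_{n_0}$ (with its two canonical embeddings of $H_{n_0}$, which agree on $\mathrm{Im}(f_{n_0})$ but differ outside it) or the permutation representation of $H_{n_0}$ on the cosets $H_{n_0}/\mathrm{Im}(f_{n_0})$ conjugated by a suitable transposition, and then builds the witnessing tower from that. Quotienting by the normal closure, your other suggestion, loses exactly the information you need, since $\mathrm{Im}(q_{n_0 m})$ may well lie in the normal closure of $\mathrm{Im}(f_{n_0})$ without lying in $\mathrm{Im}(f_{n_0})$ itself.
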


It is also proved in \cite{MS1} the following

\begin{prop}\label{monomorf y epimorf} Let $\underline{f}: \underline{G}\to \underline{H}$ a
morphism in \textbf{Tower-Grp}. $\underline{f}$ is an isomorphism
in \textbf{Tower-Grp} if and only if it is a monomorphism and an
epimorphism.
\end{prop}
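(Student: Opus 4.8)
The plan is to prove Proposition \ref{monomorf y epimorf} by combining the two preceding lemmas (the characterizations (M) and (E) of monomorphisms and epimorphisms via level morphisms) with Proposition \ref{isomorf pro-grupos}, or more directly by a telescoping argument that interleaves the two conditions. First I would observe that the forward implication is trivial in any category: an isomorphism is automatically both a monomorphism and an epimorphism, since it has a two-sided inverse. So the whole content is the converse: assume $\underline{f}:\underline{G}\to\underline{H}$ is both a monomorphism and an epimorphism in \textbf{Tower-Grp}, and produce an inverse.

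For the converse I would first reduce to the case where $\underline{f}$ is given by a level morphism $(f_n)$; this is legitimate by the reindexing result quoted from \cite{MS1} I $\S 1.3$ (the proposition stating any morphism is isomorphic to a level one), since being mono/epi and being an isomorphism are all categorical properties preserved under composition with isomorphisms. Once $(f_n)$ is a level morphism, conditions (M) and (E) both hold: for each $n$ there is $m_1\ge n$ with $\mathrm{Ker}(f_{m_1})\subset\mathrm{Ker}(p_{n m_1})$, and for each $n$ there is $m_2\ge n$ with $\mathrm{Im}(q_{n m_2})\subset\mathrm{Im}(f_n)$. The plan is then to build a morphism $\underline{g}:\underline{H}\to\underline{G}$ whose components $g_n:H_{m(n)}\to G_n$ are defined by: given $h\in H_{m(n)}$, push it down into a level where the image condition guarantees a preimage under $f$, choose such a preimage, push that back down to level $n$ in $\underline{G}$; condition (M) at an intermediate level ensures this is well-defined independent of the choice of preimage. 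This is exactly the standard "interleaving" construction: one picks an increasing sequence of indices $n=n_0<n_1<n_2<\cdots$ alternately witnessing (M) and (E), so that the composite maps fit together into a genuine morphism of inverse sequences, and then checks $\underline{g}\circ\underline{f}\sim \mathrm{id}_{\underline{G}}$ and $\underline{f}\circ\underline{g}\sim\mathrm{id}_{\underline{H}}$ by chasing the defining relations a couple of levels down.

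Alternatively, and perhaps more in the spirit of this paper, I could route through Proposition \ref{isomorf pro-grupos}: if $\underline{f}$ is an epimorphism then condition (E) forces $\mathrm{Im}(f_n)\supset\mathrm{Im}(q_{n m})=\pi_n(\underline{H})$ (after passing to the limit, using that the bonding images stabilize enough), so $\underline{f}$ essentially hits the "stable image" subsequence of $\underline{H}$; if $\underline{f}$ is a monomorphism then condition (M) says the kernels die on passing down, so $\underline{G}$ is isomorphic to its own stable-image subsequence, which $\underline{f}$ maps isomorphically onto $\pi_n(\underline{H})$-type subsequence of $\underline{H}$. Then Proposition \ref{isomorf pro-grupos} identifies $\underline{H}$ with that subsequence, closing the loop. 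I would likely present the direct interleaving argument as the main line since it is self-contained given the two lemmas, and only gesture at the \ref{isomorf pro-grupos} route as a remark.

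The main obstacle will be the bookkeeping in the interleaving: one has to choose the index sequence so that \emph{both} (M) and (E) are invoked at compatible levels, and then verify that the candidate $g_n$'s actually commute with the bonding maps $p_n$ and $q_n$ — that is, that $\underline{g}$ is a morphism of inverse sequences at all — and finally that the two composites are equivalent (not equal) to the identities under the relation $\sim$. None of these steps is deep, but the indices must be managed carefully; the well-definedness of $g_n$ (independence of the choice of $f$-preimage) is the one place where condition (M) is genuinely used and is the subtlest point. I expect the write-up to be a half-page of careful index juggling with no real surprises, essentially transcribing the argument of \cite{MS1} into the inverse-sequence setting.
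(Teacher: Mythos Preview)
The paper does not actually prove this proposition: it is introduced with ``It is also proved in \cite{MS1} the following'' and no proof is given. So there is nothing to compare your argument against in the paper itself; the result is simply imported from Marde\v{s}i\'c--Segal.

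Your main line --- reduce to a level morphism, then interleave the (M) and (E) witnesses to build $g_n:H_{m(n)}\to G_n$ and verify the two composites are $\sim$ to the identities --- is exactly the standard proof one finds in \cite{MS1}, and your identification of the well-definedness of $g_n$ (independence of the chosen $f$-preimage, via (M)) as the one genuinely delicate step is accurate. That argument is correct and self-contained given the two preceding lemmas.

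One caution about your alternative route: Proposition \ref{isomorf pro-grupos} has (ML) as a hypothesis, and nothing in the statement of Proposition \ref{monomorf y epimorf} assumes either $\underline{G}$ or $\underline{H}$ is (ML). Condition (E) alone does not give you stabilization of the images $q_{nm}(H_m)$ as $m\to\infty$, so you cannot invoke \ref{isomorf pro-grupos} directly. If you want to mention that route at all, you would first need to show that (M) and (E) together force the relevant (ML)-type stabilization, which is essentially the same index-chasing as the direct interleaving; so it buys you nothing. Stick with the interleaving argument as your only line.
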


\begin{prop}\label{monomorf} Let $(f_n):\underline{G}\to \underline{H}$ be a
level morphism of inverse sequences of groups which induces an
isomorphism of groups
$\tilde{f}:\underset{\leftarrow}{lim}(\underline{G})\to
\underset{\leftarrow}{lim}(\underline{H})$. If $\tilde{f}$ is open
and $\underline{G}$ has (ML) property, then the induced morphism
$\underline{f}:\underline{G}\to \underline{H}$ is a monomorphism
in \textbf{Tower-Grp}.
\end{prop}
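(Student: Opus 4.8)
The plan is to verify condition (M) from the monomorphism lemma above: for every $n$ there is an $m\ge n$ with $\ker f_m\subset\ker p_{nm}$. The three hypotheses enter at different places. Throughout, identify $G=\underset{\leftarrow}{lim}(G_n)$ and $H=\underset{\leftarrow}{lim}(H_n)$ with the end spaces of the associated trees, so that both are complete ultrametric groups by the lemma above; recall that the closed ball of radius $e^{-k}$ about the identity of $G$ is the subgroup $\ker\pi_k$, where $\pi_k\colon G\to G_k$ is the canonical projection, and similarly for $H$. Since $(f_n)$ is a level morphism the induced map $\tilde f$ satisfies $\pi_k^H\circ\tilde f=f_k\circ\pi_k^G$ for all $k$ and is continuous; being in addition bijective and open, it is a homeomorphism.

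Fix $n$. First I would use openness of $\tilde f$: the set $\tilde f(\ker\pi_n^G)$ is open in $H$ and contains the identity, so it contains $\ker\pi_{m_1}^H$ for some index $m_1$, which we may take $\ge n$. By injectivity of $\tilde f$ this reads $\tilde f^{-1}(\ker\pi_{m_1}^H)\subset\ker\pi_n^G$. Next I would apply the Mittag-Leffler hypothesis through Lemma \ref{ML-pro-grupos} at level $m_1$: there is $m>m_1$ with $p_{m_1m}(G_m)=\pi_{m_1}(G)$.

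Now let $x\in\ker f_m$ and set $y:=p_{m_1m}(x)\in G_{m_1}$. The element $x$ itself need not come from $G$, but $y$ does, exactly because $y\in p_{m_1m}(G_m)=\pi_{m_1}(G)$; pick $\underline g\in G$ with $\pi_{m_1}(\underline g)=y$. Using the level-morphism identity $f_{m_1}\circ p_{m_1m}=q_{m_1m}\circ f_m$ one gets $\pi_{m_1}^H(\tilde f(\underline g))=f_{m_1}(y)=q_{m_1m}(f_m(x))=0$, hence $\tilde f(\underline g)\in\ker\pi_{m_1}^H$ and so $\underline g\in\tilde f^{-1}(\ker\pi_{m_1}^H)\subset\ker\pi_n^G$. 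Therefore $p_{nm}(x)=p_{nm_1}\bigl(\pi_{m_1}(\underline g)\bigr)=\pi_n(\underline g)=0$, i.e.\ $x\in\ker p_{nm}$. This establishes (M), and the monomorphism lemma then yields that $\underline f$ is a monomorphism in \textbf{Tower-Grp}.

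The one delicate point is the lifting step: an arbitrary element of $\ker f_m$ cannot be lifted to the inverse limit, so it must first be pushed forward by a bonding map into the stable image $\pi_{m_1}(G)$, and it is precisely this stabilization of images that the Mittag-Leffler property provides via Lemma \ref{ML-pro-grupos}. The rest is routine bookkeeping with the inequalities $m>m_1\ge n$ and with the correspondence between metric balls around the identity and the kernels of the projections.
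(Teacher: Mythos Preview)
Your proof is correct and follows essentially the same strategy as the paper: translate openness of $\tilde f$ into an inclusion $\tilde f^{-1}(\ker\pi_{m_1}^H)\subset\ker\pi_n^G$, use Mittag-Leffler via Lemma~\ref{ML-pro-grupos} to push an element of $\ker f_m$ forward into the stable image $\pi_{m_1}(G)$ so that it can be lifted to the inverse limit, and then chase back to conclude (M). The only difference is cosmetic---you phrase the openness step directly in terms of the basic neighborhoods $\ker\pi_k$ of the identity, while the paper writes it as an $\epsilon$--$\delta$ condition on the ultrametric; your indices $(n,m_1,m)$ correspond to the paper's $(n_0,m_0,m_1)$.
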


\begin{proof} Since $\tilde{f}$ is a bijective open map then
$\forall \epsilon>0$ there exists $\delta>0$ such that if
$d(\tilde{f}(\underline{g}),0_H)<\delta$ then
$d(\underline{g},0_G)<\epsilon$. The metric in the inverse limit
is the ultrametric as end space of a tree. Thus, for
$\underline{g}=(g_n)_{n\in \mathbb{N}}\in G$,
$d(\underline{g},0_G)=e^{-sup\{n| g_n=0\}}$.

We want to check (M) for $(f_n)$.  For every $n_0$ let
$\epsilon=e^{-n_0}$. Let $\delta>0$ with the condition above and
consider $m_0>-ln(\delta)$. Since $(G_n,p_n)$ is (ML) consider
$m_1>m_0$ such that $p_{m_0m_1}(G_{m_1})=\pi_{m_0}(G)$ (see
\ref{ML-pro-grupos}).

If $x_{m_1}\in Ker(f_{m_1})$, $p_{m_0m_1}(x_{m_1})\in
Ker(f_{m_0})$ since $(f_n)$ is a level morphism and the diagram
commutes.

Let $\underline{g}=(g_n)_{n\in \mathbb{N}}\in G$ be such that
$\pi_{m_0}(\underline{g})=g_{m_0}=p_{m_0m_1}(x_{m_1})$. Since
$(f_n)$ is a level morphism $g_n\in Ker(f_n) \ \forall n\leq m_0$.
Then $f_n(g_n)=0 \ \forall n\leq m_0$ and
$d(\tilde{f}(\underline{g}),0_H)\leq e^{-m_0}<\delta$. Hence
$d(\underline{g},0_G)\leq \epsilon=e^{-n_0}$ which implies that
$g_n=0 \ \forall n\leq n_0$ where $0=g_{n_0}=p_{m_0m_1}(x_{m_1})$
and finally $x_{m_1}\in Ker(p_{m_0m_1})$.\end{proof}

\begin{prop}\label{epimorf} Let $(f_n):\underline{G}\to \underline{H}$ be a
level morphism of inverse sequences of groups such that the
induced morphism
$\tilde{f}:\underset{\leftarrow}{lim}(\underline{G})\to
\underset{\leftarrow}{lim}(\underline{H})$ is surjective. If
$\underline{H}$ has (ML) property, then the induced morphism
$\underline{f}:\underline{G}\to \underline{H}$ is an epimorphism
in \textbf{Tower-Grp}.
\end{prop}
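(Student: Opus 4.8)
The plan is to verify the epimorphism criterion (E) for the level morphism $(f_n)$: for every $n$ one must produce an $m\geq n$ with $\mathrm{Im}(q_{nm})\subset \mathrm{Im}(f_n)$. The two ingredients are the surjectivity of $\tilde{f}$ on inverse limits and the (ML) property of $\underline{H}$, the latter entering precisely through Lemma \ref{ML-pro-grupos}.

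First I would record what surjectivity of $\tilde{f}$ gives at a fixed level. Write $G=\varprojlim\underline{G}$, $H=\varprojlim\underline{H}$, and let $\pi_n\colon H\to H_n$, $\rho_n\colon G\to G_n$ be the natural projections. Since $(f_n)$ is a level morphism, $\tilde{f}$ is literally the map $(g_k)_k\mapsto(f_k(g_k))_k$, so $\pi_n\circ\tilde{f}=f_n\circ\rho_n$. Hence for any $h_n\in\pi_n(H)$, picking $\underline{h}\in H$ with $\pi_n(\underline{h})=h_n$ and, by surjectivity, $\underline{g}\in G$ with $\tilde{f}(\underline{g})=\underline{h}$, we get $f_n(\rho_n(\underline{g}))=h_n$. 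This shows $\pi_n(H)\subset\mathrm{Im}(f_n)$ for every $n$.

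Next I would bring in the (ML) property of $\underline{H}$. By Lemma \ref{ML-pro-grupos} applied to the inverse sequence $(H_n,q_n)$, every $n$ admits some $m>n$ with $q_{nm}(H_m)=\pi_n(H)$. Combining this with the previous step, $\mathrm{Im}(q_{nm})=q_{nm}(H_m)=\pi_n(H)\subset\mathrm{Im}(f_n)$, which is exactly condition (E) for this $m$. By the characterization of epimorphisms in \textbf{Tower-Grp} via (E), $\underline{f}$ is an epimorphism.

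I do not anticipate a genuine obstacle; the argument is essentially dual to the monomorphism case in Proposition \ref{monomorf}, with the source (ML) hypothesis there replaced by the target (ML) hypothesis here. The only points requiring a little care are checking that $(f_n)$ being a level morphism makes $\tilde{f}$ the honest componentwise map (so that the level-$n$ component of $\tilde{f}(\underline{g})$ equals $f_n(g_n)$ rather than merely being related to it through a bonding map), and making sure Lemma \ref{ML-pro-grupos} is quoted for $\underline{H}$ and not for $\underline{G}$. Both are routine once the relevant diagram is written down.
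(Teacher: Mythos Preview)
Your proof is correct and follows essentially the same approach as the paper: both invoke Lemma~\ref{ML-pro-grupos} for $\underline{H}$ to obtain $m$ with $q_{nm}(H_m)=\pi_n(H)$, and both use the surjectivity of $\tilde{f}$ together with the level structure of $(f_n)$ to show $\pi_n(H)\subset\mathrm{Im}(f_n)$, yielding condition (E). Your presentation separates these two steps more cleanly than the paper, which interleaves them, but the argument is the same.
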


\begin{proof} We need to check (E) for $(f_n)$.  Let $n_0\in
\mathbb{N}$. Since $\underline{H}$ is (ML) there is some $m_0>n_0$
such that $q_{n_0m_0}(H_{m_0})=\pi_{m_0}(H)$. If $y_{m_0}\in
H_{m_0}$ then $q_{n_0m_0}(y_{m_0})\in
q_{n_0m_0}(H_{m_0})=Im(q_{n_0m_0})$. Let
$\underline{h}=(h_n)_{n\in \mathbb{N}}\in H$ be such that
$\pi_{n_0}(\underline{h})=h_{n_0}=p_{n_0m_0}(y_{m_0})$. Since
$\tilde{f}$ is surjective there is some $\underline{g}=(g_n)_{n\in
\mathbb{N}}\in G$ such that
$\tilde{f}(\underline{g})=\underline{h}$ and this implies that
$f_n(g_n)=h_n=\pi_n(\underline{h}) \ \forall n$, and hence
$h_{n_0}=q_{n_0m_0}(y_{m_0})\subset
f_{n_0}(G_{n_0})=Im(f_{n_0})$.\end{proof}

We can recall the classical result.

\begin{prop}\label{homomorf abto} If $G$ and $H$ are separable and completely metrizable
topological groups and if $h:G\to H$ is a surjective continuous
homomorphism then h is open.
\end{prop}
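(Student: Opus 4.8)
This is the classical open mapping theorem for groups that are separable and completely metrizable (Polish groups), so one option is simply to invoke it from a standard reference on topological groups, where it is established even for the wider class of \v{C}ech--complete groups. If one wants an argument \emph{in situ}, the natural route has a Baire category part and a completeness part. Since left translations are homeomorphisms, it suffices to show that $h(U)$ is a neighbourhood of the neutral element $e_H$ of $H$ for every neighbourhood $U$ of the neutral element $e_G$ of $G$.

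\emph{Baire category step.} Given a neighbourhood $U$ of $e_G$, pick an open symmetric neighbourhood $V$ of $e_G$ with $VV\subseteq U$. Since $G$ is separable, hence Lindel\"of, there is a countable family $\{g_n\}$ with $G=\bigcup_{n}g_nV$, and surjectivity of $h$ gives $H=\bigcup_n h(g_n)\,\overline{h(V)}$. As $H$ is completely metrizable it is a Baire space, so some $h(g_n)\,\overline{h(V)}$, hence $\overline{h(V)}$ itself, has non-empty interior; choosing a non-empty open $O\subseteq\overline{h(V)}$ and using continuity of the group operations (so that $\overline{A}\,\overline{B}\subseteq\overline{AB}$ and $\overline{A}^{\,-1}=\overline{A^{-1}}$) one obtains
\[
e_H\in O^{-1}O\subseteq\overline{h(V)}^{\,-1}\,\overline{h(V)}\subseteq\overline{h(VV)}\subseteq\overline{h(U)},
\]
with $O^{-1}O$ open. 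Thus $\overline{h(W)}$ is a neighbourhood of $e_H$ for every neighbourhood $W$ of $e_G$; this is the only step using separability of $G$ and completeness of $H$.

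\emph{Completeness step.} Fixing a complete compatible metric on $G$ and a compatible metric on $H$, the plan is to show that the neighbourhood $N:=\operatorname{int}\overline{h(V_1)}$ of $e_H$ (for a suitable $V_1$) is contained in $h(U)$. Given $y\in N$, one builds partial products $p_n=x_1\cdots x_n$ with $x_n$ lying in a rapidly shrinking sequence of open symmetric neighbourhoods $V_n\ni e_G$ chosen so that: $V_{n+1}V_{n+1}\subseteq V_n$ (so the $p_n$ stay inside $V_0\subseteq U$); $h(V_{n+1})$ has diameter tending to $0$ (so that the controlled remainders $h(p_n)^{-1}y\in\overline{h(V_{n+1})}$, produced by the previous step, force $h(p_n)\to y$); and $p_nV_{n+1}$ lies in a ball of radius $2^{-n}$ about $p_n$ (so that $(p_n)$ is Cauchy for the complete metric on $G$ and converges to some $x\in\overline{V_0}\subseteq U$). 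Continuity of $h$ then yields $h(x)=y$, so $N\subseteq h(U)$ and $h$ is open.

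\emph{Where the difficulty lies.} Essentially all the content is in the completeness step, and the delicate point is that the Birkhoff--Kakutani left-invariant metric of $G$ need not be complete, so one cannot conclude that $(p_n)$ is Cauchy merely from the $x_n$ being small: the neighbourhood $V_{n+1}$ must be chosen \emph{after} the partial product $p_n$ is known, and this has to be interleaved with keeping the remainder $h(p_n)^{-1}y$ inside $\overline{h(V_{n+1})}$ along the recursion, which is the bookkeeping one must set up with care. A clean way to sidestep this entirely: pass to the Polish group $G/\ker h$, on which $h$ induces a continuous bijective homomorphism $\bar h$; then $\bar h^{-1}$ is a homomorphism whose preimages of open sets are analytic, hence have the Baire property, so by Pettis's theorem (a Baire--measurable homomorphism between Polish groups is continuous) $\bar h^{-1}$ is continuous, i.e.\ $\bar h$, and therefore $h$, is open.
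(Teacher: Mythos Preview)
Your proposal is correct, but there is nothing to compare it against: the paper does not prove this proposition at all. It is introduced with the phrase ``We can recall the classical result'' and stated without proof, then immediately used downstream (via Lemma~\ref{separable pro-grupo} and Propositions~\ref{monomorf} and~\ref{epimorf}) to establish the isomorphism theorem in \textbf{Tower-Grp}. So the paper's ``proof'' is simply an appeal to the literature, which is exactly the first option you mention.

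Your in-situ sketch is the standard Banach--Schauder/Pettis route and is sound. The Baire step is routine; you are right that the real content lies in the completeness step and, in particular, in the fact that the complete compatible metric on $G$ need not be left-invariant, so the shrinking neighbourhoods $V_{n+1}$ must be chosen adaptively after each partial product $p_n$ to force Cauchyness. Your alternative via the quotient $G/\ker h$ and Pettis's automatic-continuity theorem is also a clean and legitimate shortcut. Either argument supplies strictly more than the paper does here, since the authors are content to quote the result as classical.
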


\begin{lema}\label{separable pro-grupo} Let $(G_n,p_n)$ be an inverse sequence
in \textbf{Tower-Grp}. Then $G=\underset {\leftarrow}
{lim}(\underline{G})$ is separable if and only if $\forall n\in
\mathbb{N} \quad \pi_n(G)$ is countable (with $\pi_n:G\to G_n$ the
natural projection).
\end{lema}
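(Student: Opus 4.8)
The plan is to reduce the statement to a standard fact about the inverse limit of an inverse sequence of countable discrete sets and its relation to the metric (equivalently ultrametric) structure on the limit. First I would identify $G = \varprojlim(\underline{G})$ with the end space $end(T_{\underline{G}},v)$ of the associated tree, under which the inverse limit topology corresponds to the ultrametric topology (this identification is exactly the one used in Lemma~\ref{ML-pro-grupos} and the preceding lemmas, and it does not use the group structure). Under this identification, $\pi_n(G)$ is the set of vertices of $T_{\underline{G}}$ at distance $n$ from the root that lie on an infinite branch, i.e., the set of those $\alpha\in G_n$ that are extendable to every level. The point $\underline{g}\in G$ with $\pi_n(\underline{g})=g_n$ has $d(\underline{g},\underline{h}) = e^{-\sup\{k:\ g_k=h_k\}}$, so the ball of radius $e^{-n}$ around $\underline g$ is exactly the set of limits sharing the first $n$ coordinates with $\underline g$.

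The key observation is then: a subset $D\subset G$ is dense if and only if for every $n$ and every $\underline g\in G$ the ball $B(\underline g,e^{-n})$ meets $D$, which by the description above happens exactly when $D$ surjects onto $\pi_n(G)$ under $\pi_n$ for every $n$. So $G$ is separable iff there is a countable $D\subset G$ with $\pi_n(D)=\pi_n(G)$ for all $n$.

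For the forward direction, if $G$ is separable pick a countable dense $D$; then $\pi_n(D)=\pi_n(G)$ by the observation, and since $D$ is countable so is each $\pi_n(G)$. For the converse, suppose each $\pi_n(G)$ is countable. For each $n$ and each $\alpha\in\pi_n(G)$ choose (using countable choice) a point $d_{n,\alpha}\in G$ with $\pi_n(d_{n,\alpha})=\alpha$; this is possible precisely because $\alpha\in\pi_n(G)$ means $\alpha$ is extendable to all levels, so such a branch/limit exists. Let $D=\{d_{n,\alpha}: n\in\mathbb N,\ \alpha\in\pi_n(G)\}$, a countable set. Given any $\underline g\in G$ and any $n$, the point $d_{n,\pi_n(\underline g)}\in D$ lies in $B(\underline g,e^{-n+1})$, so $D$ is dense and $G$ is separable.

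The main obstacle — really the only subtle point — is making sure that "extendable to every level" genuinely produces an element of the inverse limit (not merely arbitrarily long finite extensions); this is the standard compactness/choice argument for nonempty inverse limits of nonempty sets along $\mathbb N$, and here it is automatic because $\pi_n(G)$ is by definition the image of the projection, so each of its elements is realized by an actual element of $G$. Everything else is a routine translation between the inverse limit topology and the end-space ultrametric, already set up in the preceding lemmas.
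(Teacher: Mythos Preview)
Your proof is correct and follows essentially the same approach as the paper's: for the converse you choose, for each $n$ and each $\alpha\in\pi_n(G)$, a lift in $G$ to build a countable dense set, exactly as the paper does; for the forward direction the paper phrases it contrapositively (an uncountable $\pi_n(G)$ gives an uncountable partition of $G$ by the open fibers $\pi_n^{-1}(g_n)$), while you phrase it directly (a dense set must surject onto each $\pi_n(G)$), which is the same observation. Your extra discussion of the tree/end-space identification and the ``main obstacle'' paragraph are unnecessary here, since $\pi_n(G)$ is by definition the image of the projection, but they do no harm.
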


\begin{proof} If $\pi_n(G)$ is countable and we consider for
each $n$ and each element $g_n\in \pi_n(G)$ an element $g\in G$
such that $\pi_n(g)=g_n$ we have a countable dense subset. If
there is some $n$ with $\pi_n(G)$ not countable, then
$\{\pi_n^{-1}(g_n)| \ g_n\in \pi_n(G)\}$ defines an uncountable
partition of $G$, and hence, G is not separable.\end{proof}

As a corollary of this we can give the following theorem which is
almost the same in \cite{MS1} (II,$\S 6.2$ Theorem 12) where it is
proved using an exact sequence and the first derived limit. Here
we present a slightly stronger version with a more direct and
geometrical proof.

\begin{teorema} Let $(f_n):\underline{G}\to \underline{H}$ be a
level morphism of inverse sequences of groups which induces an
isomorphism $\tilde{f}:\underset{\leftarrow}{lim}(G_n) \to
\underset{\leftarrow}{lim}(H_n)$. If $\underline{G}$ and
$\underline{H}$ have the (ML) property and all $\pi_n(G)$ are
countable, then the induced morphism $\underline{f}:
\underline{G}\to \underline{H}$ is an isomorphism in
\textbf{Tower-Grp}.
\end{teorema}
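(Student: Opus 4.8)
The plan is to deduce the theorem by combining the two preceding propositions with the classical open mapping theorem, using Lemma~\ref{separable pro-grupo} to guarantee separability. First I would observe that under the hypotheses the inverse limits $G=\underset{\leftarrow}{lim}(G_n)$ and $H=\underset{\leftarrow}{lim}(H_n)$ are both separable: since all $\pi_n(G)$ are countable, $G$ is separable by Lemma~\ref{separable pro-grupo}, and since $\tilde f$ is a bijection carrying $G$ onto $H$, the images $\pi_n(H)=\tilde f_n(\pi_n(G))$ are also countable, so $H$ is separable too. Both are complete metrizable (indeed complete ultrametric) topological groups, so Proposition~\ref{homomorf abto} applies and tells us that $\tilde f$, being a surjective continuous homomorphism between separable completely metrizable groups, is open.

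Next I would feed this into the two monomorphism/epimorphism criteria. Since $\tilde f$ is an isomorphism of topological groups that is additionally open, and $\underline G$ has the (ML) property, Proposition~\ref{monomorf} gives that $\underline f:\underline G\to\underline H$ is a monomorphism in \textbf{Tower-Grp}. Since $\tilde f$ is in particular surjective and $\underline H$ has the (ML) property, Proposition~\ref{epimorf} gives that $\underline f$ is an epimorphism in \textbf{Tower-Grp}. Finally, by Proposition~\ref{monomorf y epimorf}, a morphism in \textbf{Tower-Grp} that is simultaneously a monomorphism and an epimorphism is an isomorphism, so $\underline f$ is an isomorphism in \textbf{Tower-Grp}, which is exactly the assertion.

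The only genuine point that needs care — and the step I expect to be the main obstacle — is verifying the hypotheses of Proposition~\ref{homomorf abto} cleanly, in particular that $H$ is separable. One cannot invoke Lemma~\ref{separable pro-grupo} for $H$ directly from an assumption (the countability hypothesis is stated only for $\pi_n(G)$), so one must argue that $\tilde f$ transports countability of the projections: for each $n$ the level map $f_n$ restricts to a map $\pi_n(G)\to\pi_n(H)$ which is surjective because $\tilde f$ is surjective (given $h=(h_k)\in H$ pick $g$ with $\tilde f(g)=h$, so $f_n(\pi_n(g))=\pi_n(h)$), hence $\pi_n(H)$ is a countable set and Lemma~\ref{separable pro-grupo} applies to $H$. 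Everything else is a bookkeeping chain: open mapping theorem, then the two (ML)-based criteria, then the mono+epi $\Rightarrow$ iso principle, all of which are quoted above.
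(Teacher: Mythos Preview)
Your proposal is correct and follows essentially the same route as the paper: separability of $G$ and $H$ via Lemma~\ref{separable pro-grupo} (using surjectivity of $\tilde f$ to get countability of $\pi_n(H)$), then the open mapping theorem (Proposition~\ref{homomorf abto}), then Propositions~\ref{monomorf} and~\ref{epimorf}, and finally Proposition~\ref{monomorf y epimorf}. The only point the paper makes explicit that you leave implicit is the continuity of $\tilde f$: the paper justifies it by noting that $\tilde f$ is the end-space map induced by a metrically proper map between the trees, hence uniformly continuous; you simply assert continuity, which is fine since a level morphism induces a continuous map on inverse limits, but you may want to add one clause saying so.
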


\begin{proof} Since $\tilde{f}$ is surjective $\pi_n (H)$ is
also countable, and by lemma \ref{separable pro-grupo} $G$ and $H$
are separable. Since $\tilde{f}$ is the induced map between the
limits by a level morphism, it can be considered as the induced
map between the end spaces by a metrically proper map between the
trees and hence it is uniformly continuous with the induced
ultrametric. Thus, by \ref{homomorf abto} it is open and by
propositions \ref{monomorf} and \ref{epimorf} the induced morphism
in \textbf{Tower-Grp} $\underline{f}$ is a monomorphism and an
epimorphism, and hence (see \ref{monomorf y epimorf})
$\underline{f}$ is an isomorphism in \textbf{Tower-Grp}.
\end{proof}

\section{Tree of shape morphisms}

Up to this section we have related categories of inverse sequences
with categories of simplicial trees and we have mentioned how this
can be used to describe a shape morphism as a map between trees.
In this last section we treat the spaces of shape morphisms
between compact connected metric spaces. We use the representation
of the shape morphisms as approximative maps since the spaces of
approximative maps can be given as the inverse limit of an inverse
sequence of maps. Thus, this inverse sequence corresponds to a
tree, the infinite branches will be the approximative maps (i.e.
the shape morphisms), and the ultrametric between these as end
space of a tree (\ref{end}) is equivalent, up to uniform
homeomorphism, to the ultrametric described by M. Morón and F. R.
Ruiz del Portal in \cite{M-P}.

\paragraph{Inverse limits and approximative maps}

Let $Y$ be a compactum in the Hilbert cube $I^\infty$, Borsuk
proves in \cite{Bo3} that there is
\[Y_1 \overset{p_1}{\leftarrow} Y_2 \overset{p_2}{\leftarrow} \ldots\] an inverse
system such that $\underset{\leftarrow}{\lim}Y_k =Y$ with
$Y_k\subset I^\infty$ prisms in the sense of Borsuk \cite{Bo3}
($Y_k$ is homeomorphic to the cartesian product $P\times
I^{\infty}$ with $P$ a compact polyhedron) such that $Y_k$ is a
neighborhood of $Y$, $Y_{k+1} \subset Y_{k}$ and $p_i$ the natural
inclusion. Let $X$ another compactum and $\{f_k\}_{k\in
\mathbb{N}}$ an approximative map of $X$ towards $Y$ in the sense
of Borsuk \cite{Bo1} with $f_k: X \to Y_k$.

\begin{displaymath}
\xymatrix{Y_1  & Y_2 \ar[l] & Y_3 \ar[l] & \cdots \ar[l]\\
            X \ar[u]|{f_1} \ar[ur]|{f_2} \ar[urr]|{f_3}& & &}
\end{displaymath}

\begin{prop}\label{aprox map} Given $\{f_k\}_{k\in \mathbb{N}}$ with $f_k:X \to
Y_k$ an approximative map then there exists $\{f'_k\}_{k\in
\mathbb{N}}$ with $f'_k:X \to Y_k$ an approximative map such that
$p_k\circ f'_{k+1}\simeq f'_k$ in $Y_k \quad \forall k\in
\mathbb{N}$ and $\{f_k\}_{k\in \mathbb{N}} \simeq \{f'_k\}_{k\in
\mathbb{N}}$.
\end{prop}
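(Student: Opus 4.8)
The plan is to produce, from an arbitrary approximative map $\{f_k\}$, a new approximative map $\{f'_k\}$ that is a genuine "level" representative, i.e. one for which the triangles actually commute up to homotopy inside the $Y_k$'s, and which is equivalent (in Borsuk's sense) to the original. The construction is inductive, and the only flexibility available is that each $Y_k$ is a neighborhood of $Y$ in $I^\infty$, the $p_k$ are inclusions $Y_{k+1}\subset Y_k$, and approximative maps are only required to be "close" (their homotopy behavior stabilizes on smaller and smaller neighborhoods). So the first thing I would do is recall the precise definition of an approximative map of Borsuk \cite{Bo1}: a sequence $f_k:X\to Y_k$ such that for every neighborhood $V$ of $Y$ there is an index $k_0$ with $f_k\simeq f_{k+1}$ in $V$ for all $k\ge k_0$ (here identifying $f_{k+1}$ with $p_k\circ f_{k+1}$), and the equivalence $\{f_k\}\simeq\{g_k\}$ meaning for every neighborhood $V$ of $Y$ there is $k_0$ with $f_k\simeq g_k$ in $V$ for $k\ge k_0$.

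Next I would set up the inductive normalization. Define $f'_1=f_1$. Having defined $f'_1,\dots,f'_k$ with $p_{i}\circ f'_{i+1}\simeq f'_i$ in $Y_i$ for $i<k$, I want to choose $f'_{k+1}$. Because $Y_{k+1}$ is a neighborhood of $Y$ and $\{f_k\}$ is an approximative map, there is an index $m>k+1$ such that $f_m\simeq f_{k+1}$ in $Y_{k+1}$; more precisely, since the nested neighborhoods $Y_j$ shrink to $Y$, I can pass to a "large enough" index where the map has stabilized relative to the neighborhood $Y_{k+1}$, and then push that representative back into $Y_{k+1}$ via the inclusions. Setting $f'_{k+1}$ to be (a suitable representative of) this pushed-back map, the homotopy witnessing $f_m\simeq f_{k+1}$ in $Y_{k+1}$, restricted/composed with the inclusion $Y_{k+1}\subset Y_k$, gives $p_k\circ f'_{k+1}\simeq f'_k$ in $Y_k$. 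One must be a little careful to keep the choices coherent so that $\{f'_k\}$ is itself an approximative map — but this is automatic because $f'_k$ differs from $f_{k}$ only by a homotopy in $Y_k$, and the homotopies get absorbed when we pass to any fixed smaller neighborhood $V$: for $k$ large, $Y_k\subset V$, so $f'_k\simeq f_k$ in $V$ and the stabilization property of $\{f_k\}$ transfers verbatim.

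Finally I would verify $\{f_k\}\simeq\{f'_k\}$: given a neighborhood $V$ of $Y$, choose $k_0$ with $Y_{k_0}\subset V$; for $k\ge k_0$ we have $f'_k\simeq f_k$ in $Y_k\subset V$ by construction, which is exactly the defining condition for equivalence of approximative maps. The main obstacle, and the step that deserves the most care, is the inductive choice in the previous paragraph: one needs the index $m$ (where $\{f_k\}$ has "stabilized" with respect to the neighborhood $Y_{k+1}$) to be chosen uniformly enough that the resulting sequence $\{f'_k\}$ both has the strict level-commutativity $p_k\circ f'_{k+1}\simeq f'_k$ in $Y_k$ and remains an approximative map and stays in the original equivalence class — a standard but slightly fiddly "diagonal" bookkeeping over the cofinal system of neighborhoods $\{Y_k\}$. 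Everything else is routine manipulation of homotopies inside the prisms $Y_k$, using that each $Y_k\cong P_k\times I^\infty$ so homotopies extend well and nothing pathological happens at the level of CW-homotopy type.
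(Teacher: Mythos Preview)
There is a genuine gap in the inductive construction. The approximative--map hypothesis only gives, for each $N$, an index $m(N)$ such that $p_t\circ f_{t+1}\simeq f_t$ in $Y_N$ for all $t\ge m(N)$; it does \emph{not} assert that $f_{k+1}$ itself lies in that stable homotopy class in $Y_{k+1}$. Your key step ``there is an index $m>k+1$ such that $f_m\simeq f_{k+1}$ in $Y_{k+1}$'' is therefore unjustified: for small $k$ the map $f_{k+1}$ may be homotopically unrelated, inside $Y_{k+1}$, to every later $f_m$. The same problem bites at the base of the induction: setting $f'_1=f_1$ forces $p_1\circ f'_2\simeq f_1$ in $Y_1$, but if $f_1$ is not in the $Y_1$--stable class there is no way to choose $f'_2\colon X\to Y_2$ which both links to $f_1$ in $Y_1$ and stays in the equivalence class of $\{f_k\}$. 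Consequently the auxiliary claim ``$f'_k$ differs from $f_k$ only by a homotopy in $Y_k$'' is exactly what you cannot arrange in general, and the verification of $p_k\circ f'_{k+1}\simeq f'_k$ in $Y_k$ collapses with it.

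The paper's proof avoids this by \emph{not} trying to make $f'_k\simeq f_k$ in $Y_k$. Instead it first passes to a cofinal subsequence: take $N_1=1$, set $N_{i+1}=m(N_i)$, and define $g_{N_i}$ to be the pushforward of $f_{m_i}$ into $Y_{N_i}$ via the inclusions. Because each $g_{N_i}$ comes from an index at or beyond the stabilization threshold for $Y_{N_i}$, the required homotopies $g_{N_i}\simeq p_{N_iN_{i+1}}\circ g_{N_{i+1}}$ in $Y_{N_i}$ follow by telescoping. One then fills in the intermediate levels by composing with inclusions, and checks the equivalence $\{g_i\}\simeq\{f_i\}$ using only the eventual behaviour of both sequences. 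Your ``diagonal bookkeeping'' instinct is right, but the diagonal must be run over the stabilization indices $m(N)$, not anchored at $f_1$.
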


\begin{proof} By definition of approximative map we know that
\ $\forall N \ \exists m(N)$ such that $p_t \circ f_{t+1} \simeq
f_t$ in $Y_N \quad \forall t\geq m(N)$.

For $N_1=1$ there exists $m_1$ such that $p_t \circ f_{t+1}\simeq
f_t$ in $Y_1 \quad \forall t \geq m_1$. Define $g_{N_1}:=p_1 \circ
p_2 \circ \ldots \circ p_{m_1-1}\circ f_{m_1}: X \to Y_1$. Now let
$N_2=m_1$ and there exists $m_2$ such that $p_t\circ f_{t+1}\simeq
f_t$ in $Y_{N_2} \quad \forall t \geq m_2$. Then, define
$g_{N_2}:=p_{m_1} \circ p_{m_1+1} \circ \ldots \circ
p_{m_2-1}\circ f_{m_2}: X \to Y_{N_2}$. We can construct in this
way an inverse sequence $\{Y_{N_j}\}$
\[ Y_{N_1} \overset{p_{N_2N_1}}{\leftarrow}
Y_{N_2} \overset{p_{N_3N_2}}{\leftarrow} \ldots\] with
$p_{N_{i+1}N_i}$ the natural inclusion ($p_{N_{i+1}N_i}=p_{N_i}
\circ p_{N_i+1} \circ \ldots \circ p_{N_{i+1}}$) which is
equivalent to $\{Y_k\}_{k\in \mathbb{N}}$ since $\{N_j\}_{j\in
\mathbb{N}}$ is cofinal in $\mathbb{N}$.

Hence we have another approximative map from X towards Y,
$\{g_{N_i}\}_{i\in \mathbb{N}}$ with $g_{N_i}:=p_{m_{i-1}} \circ
p_{m_{i-1}+1} \circ \ldots \circ p_{m_i-1}\circ f_{m_i}: X \to
Y_{N_i}$. Clearly $g_{N_i} \simeq p_{N_{i+1}}^{N_i} \circ
g_{N_{i+1}}$ in $Y_{N_i} \quad \forall i$.

\begin{displaymath}
\xymatrix{Y_1  & Y_{N_2} \ar[l] & Y_{N_3} \ar[l] & \cdots \ar[l]\\
            X \ar[u]|{g_1} \ar[ur]|{g_{N_2}} \ar[urr]|{g_{N_3}}& & &}
\end{displaymath}

Now we can define the approximative map $\{g_i\}_{i\in
\mathbb{N}}$ with $g_i:=p_{N_i}^{i}\circ g_{N_i}:X \to Y_i \quad
\forall N_{i-1}<i<N_i$. It is quite easy to see that it represents
the same shape morphism. Following Borsuk's approximation, for any
neighborhood $V$ of $Y$ there exists $i_0$ such that
$Y_{N_i}\subset V \ \forall i\geq i_0$, and it is immediate to
check that $\{g_i\}_{i\in \mathbb{N}} \simeq \{g_{N_i}\}_{i\in
\mathbb{N}} \simeq \{f_i\}_{i\in \mathbb{N}}$.

Hence, for every shape morphism there exists a representative
which is an approximative map with the condition above.\end{proof}

Let $[X,Y_k]$ the homotopy classes of continuous maps from $X$ to
$Y_k$. Since $Y_k$ is a prism, we can prove that
$card([X,Y_k])\leq \aleph_0$. $p_k:Y_{k+1}\to Y_k$ induces a map
$p_k^*:[X,Y_{k+1}]\to [X,Y_k]$ and hence $([X,Y_k],p_k^*)$ is an
inverse sequence in \textbf{Tower-Set}. Clearly, an element in the
inverse limit is an approximative map. Then, in the correspondent
tree of this inverse sequence $(T_{X,Y},v)$, the geodesically
complete branches are given by sequences of vertices that
represent approximative maps.

\begin{prop} \label{biyeccion} There is a bijection between the homotopy
classes of approximative maps from X to Y and the geodesically
complete branches in $T_{X,Y}$.
\end{prop}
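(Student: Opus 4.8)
The plan is to exhibit the bijection directly from the construction of $T_{X,Y}$ and the definition of homotopy classes of approximative maps, using Proposition \ref{aprox map} to pass to well-behaved representatives. Recall that the vertices of $T_{X,Y}$ at level $k$ are the elements of $[X,Y_k]$, and an edge joins $[\varphi]\in[X,Y_{k+1}]$ to $[\psi]\in[X,Y_k]$ precisely when $p_k^*[\varphi]=[\psi]$, i.e. $p_k\circ\varphi\simeq\psi$ in $Y_k$. Thus a geodesically complete branch of $T_{X,Y}$ is exactly a sequence $([\psi_k])_{k\in\mathbb{N}}$ with $\psi_k:X\to Y_k$ and $p_k\circ\psi_{k+1}\simeq\psi_k$ in $Y_k$ for all $k$, which is nothing but an element of $\underset{\leftarrow}{\lim}([X,Y_k],p_k^*)$.

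First I would define a map $\Theta$ from homotopy classes of approximative maps to geodesically complete branches. Given an approximative map $\{f_k\}$, apply Proposition \ref{aprox map} to obtain an equivalent approximative map $\{f'_k\}$ with $p_k\circ f'_{k+1}\simeq f'_k$ in $Y_k$ for all $k$; then set $\Theta(\{f_k\}):=([f'_k])_k$, which is a geodesically complete branch by the observation above. I would need to check that $\Theta$ is well defined: if $\{f_k\}\simeq\{g_k\}$ as approximative maps, and $\{f'_k\},\{g'_k\}$ are the corresponding adjusted representatives, then $[f'_k]=[g'_k]$ in $[X,Y_k]$ for every $k$ — this follows because homotopy of approximative maps means $f_k\simeq g_k$ in arbitrarily deep neighborhoods, hence in particular (after the cofinal reindexing of Proposition \ref{aprox map}) in each $Y_k$, so the two branches coincide. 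Conversely, define $\Xi$ from geodesically complete branches to homotopy classes of approximative maps by sending a branch $([\psi_k])_k$ to the class of the approximative map $\{\psi_k\}$ itself: the condition $p_k\circ\psi_{k+1}\simeq\psi_k$ in $Y_k$ is stronger than Borsuk's defining condition for an approximative map, so $\{\psi_k\}$ is indeed an approximative map, and its class does not depend on the chosen representatives $\psi_k$ of the vertices $[\psi_k]$ since changing $\psi_k$ within its homotopy class in $Y_k$ gives a homotopic (hence equivalent) approximative map.

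Then I would verify $\Theta$ and $\Xi$ are mutually inverse. $\Xi\circ\Theta=\mathrm{id}$ on homotopy classes of approximative maps because $\{f'_k\}\simeq\{f_k\}$ by Proposition \ref{aprox map}, so $\Xi(\Theta(\{f_k\}))=[\{f'_k\}]=[\{f_k\}]$. And $\Theta\circ\Xi=\mathrm{id}$ on geodesically complete branches because, starting from a branch $([\psi_k])_k$ that already satisfies $p_k\circ\psi_{k+1}\simeq\psi_k$ in $Y_k$, the adjustment of Proposition \ref{aprox map} can be taken to return the same sequence of homotopy classes (the branch is already in the required form, so one may take $N_j=j$), giving back $([\psi_k])_k$.

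The main obstacle I anticipate is the bookkeeping around Proposition \ref{aprox map}: that proposition produces an equivalent approximative map indexed by a cofinal subsequence $\{N_j\}$ and then re-expands it over all of $\mathbb{N}$, so one must be careful that the resulting sequence of homotopy classes is canonically attached to the original homotopy class of approximative maps and is independent of the choices of the $m_j$. The key point making this work is that any two such adjustments are themselves homotopic as approximative maps, and homotopic approximative maps with the level-compatibility property $p_k\circ f'_{k+1}\simeq f'_k$ necessarily induce the \emph{same} branch — because level-compatibility forces the homotopies witnessing equivalence to be realizable level by level, so equality of classes in each $[X,Y_k]$ follows. Once this independence is nailed down, everything else is a direct unwinding of definitions.
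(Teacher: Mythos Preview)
Your proposal is correct and follows essentially the same approach as the paper: use Proposition~\ref{aprox map} to produce a level-compatible representative of each homotopy class of approximative maps (giving a branch), and observe that a geodesically complete branch is already an approximative map. The paper's proof is a two-sentence sketch of exactly this, while you have spelled out the well-definedness and mutual-inverse verifications that the paper leaves implicit.
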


\begin{proof} Clearly a geodesically complete branch of the tree
represents an approximative map and by proposition \ref{aprox map}
each class of approximative maps is represented by a geodesically
complete branch in $T_{X,Y}$.\end{proof}

Let us recall that by $T_{X,Y}^\infty$ we denote the maximal
geodesically complete subtree of $T_{X,Y}$.

\begin{prop} Consider (Sh(X,Y),d) the space of shape morphisms defined
in \cite{M-P}. Then, $end(T_{X,Y}^\infty)$ is uniformly
homeomorphic to (Sh(X,Y),d).
\end{prop}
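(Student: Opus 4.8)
The plan is to produce a natural bijection $\Psi\colon Sh(X,Y)\to end(T_{X,Y}^{\infty})$ and then to check that it carries the ultrametric $d$ of \cite{M-P} to a metric uniformly equivalent to the end metric $d_e$ of Definition~\ref{end}. Note first that, as a set, $end(T_{X,Y}^{\infty})$ is just the collection of geodesically complete branches of $T_{X,Y}$, since every geodesic ray issuing from the root lies in the maximal geodesically complete subtree; and such a branch is precisely a coherent sequence of homotopy classes $([h_k])_{k\in\mathbb N}$, $h_k\colon X\to Y_k$, with $p_k^{*}[h_{k+1}]=[h_k]$, i.e.\ an element of the inverse limit of $([X,Y_k],p_k^{*})$.

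For the bijection I would use that, by Borsuk's approximative-map description of $Sh(X,Y)$ together with Proposition~\ref{aprox map}, every shape morphism is represented by a \emph{coherent} approximative map $\{f_k\}$ (that is, $p_k\circ f_{k+1}\simeq f_k$ in $Y_k$ for all $k$), which assigns to it the element $([f_k])_k$ of that inverse limit, and two coherent approximative maps are homotopic as approximative maps exactly when they determine the same element. This is the content of Proposition~\ref{biyeccion}, and it supplies $\Psi$.

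The second step is the metric comparison. Given $\alpha\neq\beta$ in $Sh(X,Y)$, I would set $n(\alpha,\beta)=\max\{m\colon \alpha_m=\beta_m \text{ in }[X,Y_m]\}$, which is finite and, by coherence, equals $\max\{m\colon \alpha_k=\beta_k\ \forall k\le m\}$. On the tree side the branches $\Psi(\alpha)$ and $\Psi(\beta)$ coincide exactly on the vertices at levels $0,1,\dots,n(\alpha,\beta)$ and separate immediately afterwards, so $d_e(\Psi(\alpha),\Psi(\beta))=e^{-n(\alpha,\beta)}$ by Definition~\ref{end}. By its definition the ultrametric of \cite{M-P} is a fixed strictly decreasing function $\varphi$ of this same integer, with $\varphi(n)\to 0$ as $n\to\infty$. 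Hence both distances are monotone functions of the single parameter $n(\alpha,\beta)$, and a routine $\varepsilon$--$\delta$ argument shows that $\Psi$ and $\Psi^{-1}$ are uniformly continuous: for $\varepsilon>0$ pick $N$ with $e^{-N}<\varepsilon$ (respectively $\varphi(N)<\varepsilon$) and use that $d(\alpha,\beta)<\varphi(N)$ forces $n(\alpha,\beta)>N$, hence $d_e(\Psi(\alpha),\Psi(\beta))<e^{-N}<\varepsilon$ (respectively, symmetrically). This yields that $\Psi$ is a uniform homeomorphism.

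The point I expect to require the most care is reconciling the ultrametric of \cite{M-P} with the invariant $n(\alpha,\beta)$: one has to verify that the inverse system of Borsuk prisms and the level invariants used there agree, up to a cofinal change of index, with the ones used to build $T_{X,Y}$, and that $d$ depends on that data only through the homotopy classes $[f_k]\in[X,Y_k]$. Any residual discrepancy at the level of the inverse system is harmless, however: a pro-isomorphism of towers induces, via Theorems~\ref{equiv categ} and~\ref{tma equiv}, a uniform homeomorphism of the corresponding end spaces, so the resulting metrics on $Sh(X,Y)$ are uniformly equivalent in any case. Once this bookkeeping is settled, the two steps above complete the proof.
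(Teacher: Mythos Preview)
Your bijection step is fine and coincides with the paper's use of Proposition~\ref{biyeccion}. The gap is in the metric comparison. The ultrametric $d$ of \cite{M-P} is \emph{not} a function of the combinatorial level $n(\alpha,\beta)$ determined by the chosen tower $\{Y_k\}$; it is defined intrinsically from the embedding of $Y$ in the Hilbert cube, essentially as the infimum of those $\varepsilon>0$ for which representatives of $\alpha$ and $\beta$ become homotopic inside the $\varepsilon$-neighborhood $B(Y,\varepsilon)$. Your sentence ``the ultrametric of \cite{M-P} is a fixed strictly decreasing function $\varphi$ of this same integer'' is therefore false, and the example immediately following the proposition in the paper makes this explicit: one can choose the prisms $Y_k$ so that $\tilde d(\alpha_{k-1},\alpha_k)=e^{-n_k}$ with $n_k$ as large as one likes, while $d(\alpha_{k-1},\alpha_k)=2^{-(k+2)}$ is fixed. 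So $d$ and $n(\alpha,\beta)$ are not tied by any function independent of the choice of $\{Y_k\}$, and the homeomorphism need not even be bi-H\"older.

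Your attempted rescue via pro-isomorphism does not close the gap either: that argument compares end metrics of two \emph{towers}, but $d$ is not, a priori, the end metric of any tower. What is actually needed---and what the paper does---is a direct two-sided comparison between the metric neighborhoods $B(Y,\varepsilon)$ and the combinatorial levels $Y_k$: given $\varepsilon$, choose $n_0$ with $Y_k\subset B(Y,\varepsilon/2)$ for $k\ge n_0$ to pass from $\tilde d$ to $d$; and given $n_0$, use that $Y_{n_0}$ is a neighborhood of $Y$ to find $\delta$ with $B(Y,2\delta)\subset Y_{n_0}$, together with coherence $p_k\circ f_{k+1}\simeq f_k$, to pass from $d$ to $\tilde d$. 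Once you replace your $\varphi$-assumption by this neighborhood comparison, the rest of your outline goes through.
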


\begin{proof} It is well known the bijection between shape
morphisms and homotopy classes of approximative maps, see
\cite{MS1}. Hence, by \ref{biyeccion} we can assume this bijection
between shape morphisms and branches of $T_{(X,Y)}^\infty$.

$\forall \epsilon > 0$ there exists $n_0$ such that $Y_k\subset
B(Y,\frac{\epsilon}{2}) \quad \forall k\geq n_0$. Consider two
branches of $T_{(X,Y)}^\infty$ $F$ and $G$ such that
$\tilde{d}(F,G)<\delta=e^{-n_0}$ with the metric $\tilde{d}$ of
$end(T_{(X,Y)}^\infty)$. $F$ and $G$ represent two approximative
maps $\{f_k\}_{k\in \mathbb{N}}$ and $\{g_k\}_{k\in \mathbb{N}}$
such that $f_k\simeq g_k$ in $Y_k \quad \forall k\leq n_0$ and
since $p_k\circ f_{k+1} \simeq f_k$ in $Y_k \quad \forall k\in
\mathbb{N}$ we have that $f_k\simeq g_k$ in $Y_{n_0}$ and, in
particular in $B(Y,\frac{\epsilon}{2}) \quad \forall k \geq n_0$,
and hence for the respective shape morphisms
$\underline{f},\underline{g}, \quad
d(\underline{f},\underline{g})<\epsilon$.

On the other way, $\forall \epsilon > 0$ there exists $n_0$ such
that $e^{-n}<\epsilon \quad \forall n\geq n_0$, and since
$Y_{n_0}$ is a neighborhood of $Y$, there exists $\delta>0$ such
that $B(Y,2 \cdot \delta)\subset Y_{n_0}$. Consider two shape
morphisms (represented by two approximative maps)
$\underline{f},\underline{g}$ such that
$d(\underline{f},\underline{g})<\delta \Rightarrow \exists n_1$
such that $f_k\simeq g_k$ in $B(Y,2\cdot \delta)$, and in
particular in $Y_{n_0} \quad \forall k \geq n_1$, and since
$p_k^{n_0}\circ f_k \simeq p_k^{n_0}\circ g_k$ in $Y_{n_0}$ the
corresponding branches $F,G$ coincide at least on $[0,e^{-n_0}]$
and hence $\tilde{d}(F,G)<\epsilon$.\end{proof}

\begin{nota} Note that this result is independent from the
election of the sequence of prisms $Y_k$.
\end{nota}



We tried to see if this homeomorphism could hold some stronger
condition as being bi-Lipschitz or bi-Hölder and it doesn't.

\begin{ejp} Let $X=\{*\}$ a single point and
$Y=\{1,\frac{1}{2},\ldots,\frac{1}{2^n},\ldots,0\}$.
\end{ejp}
The shape morphisms are represented by the maps
\[Sh(X,Y):=\left\{
\begin{tabular}{l} $\alpha_n \quad \mbox{ such that } \alpha_n(*)=\{\frac{1}{2^n}\},$\\

$\alpha_0 \quad \mbox{ such that } \alpha_0(*)=\{0\}$\end{tabular}
 \right.\]

Clearly $d(\alpha_0,\alpha_n)=\frac{1}{2^{n+1}}$ and
$d(\alpha_n,\alpha_{n+1})=\frac{1}{2^{n+2}}$ in $(Sh(X,Y),d)$.

Now we can choose an inverse system of compact neighborhoods
$\{Y_k\}_{k\in \mathbb{N}}$ with $Y_k \subset Y_{k+1}$ and
$p_k:Y_{k+1}\to Y_k$ the natural inclusion such that

$\alpha_i\simeq \alpha_j$ ( this is $\alpha_i(*) \mbox{ and }
\alpha_j(*)$ are in the same path-component) in  $Y_1,Y_2, \ldots
Y_{n_1} \quad \forall i,j \in \mathbb{N}\cup \{0\}$, with
$n_1>-ln\Big(\frac{1}{4}\Big)$ and

$\alpha_i\simeq \alpha_j$ in $Y_{n_{k-1}+1}, \ldots Y_{n_k} \quad
\forall i,j\geq k-1$, with $n_k>-k \cdot
ln\Big(\frac{(\frac{1}{2^{k+1}})}{k}\Big) \linebreak \forall k\geq
2$.

In this case it is clear that
$\tilde{d}(\alpha_{k-1},\alpha_k)=e^{-n_k}<\Big(\frac{(\frac{1}{2^{k+1}})}{k}\Big)^{k}
=\Big(\frac{d(\alpha_{k-1},\alpha_k)}{k}\Big)^{k}$. Thus, for any
constant $C>0$ and $0<l<1$ there exists $k_0$ such that $\forall
k>k_0$\linebreak
$C\cdot(\tilde{d}(\alpha_{k-1},\alpha_k))^l<C\cdot
(\tilde{d}(\alpha_{k-1},\alpha_k))^{\frac{1}{k}} <k\cdot
(\tilde{d}(\alpha_{k-1},\alpha_k))^{\frac{1}{k}}<d(\alpha_{k-1},\alpha_k)$
and hence, the uniform homeomorphism is not bi-Hölder.

\vspace{0.5cm}

Using these trees of shape morphisms we are able to obtain the
next result from \cite{M-P} about how composition induces
uniformly continuous maps between the spaces of shape morphisms.

\begin{prop} Let $X,Y,Z$ be compact metric spaces and $F:X \to Y$ a
\emph{shape} morphism. If we build, using inverse sequences of
neighborhoods totally ordered by inclusion with inverse limits $X$
and $Y$, $T_{Z,X}$ and $T_{Z,Y}$, and define
$F_*:end(T^\infty_{Z,X})\to end(T^\infty_{Z,Y})$ as $F_*(\alpha)=F
\circ \alpha$, then $F_*$ is uniformly continuous.
\end{prop}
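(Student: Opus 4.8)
The plan is to reduce the statement to the uniform continuity of the composition operation at the level of approximative maps, exactly as encoded by the trees $T_{Z,X}$ and $T_{Z,Y}$. Recall that a shape morphism $F:X\to Y$ is represented by an approximative map $\{F_k\}_{k\in\mathbb{N}}$ with $F_k:X\to Y_k$, and that by Proposition \ref{aprox map} we may assume $p_k\circ F_{k+1}\simeq F_k$ in $Y_k$ for all $k$. A branch $\alpha\in end(T^\infty_{Z,X})$ is a coherent sequence of homotopy classes $[\alpha_k]\in[Z,X_k]$ representing an approximative map $\{\alpha_k\}:Z\to X$, and $F_*(\alpha)$ is the branch of $T^\infty_{Z,Y}$ given by the coherent sequence of classes $[F\circ\alpha]_k$, which one builds by composing $\alpha$ (viewed with values in appropriate neighborhoods of $X$) with the $F_k$ and then pushing into the neighborhoods $Y_j$ of $Y$. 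The key point is that this composition is functorial up to homotopy, so it does send geodesically complete branches to geodesically complete branches and is well defined.

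First I would fix $\varepsilon>0$ and choose $n_0$ with $e^{-n}<\varepsilon$ for $n\geq n_0$; since $Y_{n_0}$ is a neighborhood of $Y$, there is a $\delta_Y>0$ with $B(Y,\delta_Y)\subset Y_{n_0}$. By continuity of the representative $\{F_k\}$ of the shape morphism $F$ (an approximative map is, by definition, ``almost coherent'' in every neighborhood of $Y$), there is some neighborhood $X_{m}$ of $X$ and an index such that whenever two maps $Z\to X_m$ are homotopic in $X_m$, their $F$-images land in $B(Y,\delta_Y)\subset Y_{n_0}$ and are homotopic there. Then I would pick $\delta=e^{-m'}$ where $m'$ is large enough that $\tilde d(\alpha,\beta)<\delta$ forces $\alpha_k\simeq\beta_k$ in $X_k$ for all $k$ up to the level needed to control the $F_k$-images in $Y_{n_0}$. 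Unwinding: if $\tilde d(\alpha,\beta)<\delta$ then $\alpha$ and $\beta$ agree, as branches, far enough out that $F\circ\alpha$ and $F\circ\beta$ agree as homotopy classes in $Y_{n_0}$, hence $\tilde d(F_*\alpha,F_*\beta)\leq e^{-n_0}<\varepsilon$. This is precisely the same bookkeeping as in the preceding proposition comparing $end(T^\infty_{X,Y})$ with $(Sh(X,Y),d)$, just transported through a composition.

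The main obstacle will be the cofinality juggling: the neighborhood systems $\{X_j\}$ and $\{Y_j\}$ are fixed in advance (``totally ordered by inclusion''), but the representative $\{F_k\}$ of $F$ only matches these at a cofinal subsequence of levels, so I must be careful that the reindexing used to define $F_*$ as a map of end spaces is compatible with the metrics $d_e$ on both trees — i.e. that passing to a cofinal subsequence only rescales the ultrametric in a uniformly controlled way, which it does because $e^{-n}\to 0$ and the uniform structure only cares about arbitrarily small scales. Once that compatibility is isolated (it is essentially the content of the ``independence from the choice of prisms'' remark), the estimate above goes through and gives uniform continuity; in fact it shows $F_*$ is even non-expansive up to the fixed shift determined by $F$. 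I would close by remarking that this recovers the corresponding statement of \cite{M-P}.
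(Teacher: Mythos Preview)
Your approach differs from the paper's in a meaningful way. The paper does \emph{not} carry out a direct $\varepsilon$--$\delta$ estimate on the end spaces. Instead it shows that $F$ induces a morphism of inverse sequences $([Z,X_k],i_k^*)\to([Z,Y_k],i_k^*)$, then invokes the equivalence of categories (Theorem~\ref{equiv categ}) to get a rooted metrically proper map between the trees, restricts to the maximal geodesically complete subtrees, and finally applies Theorem~\ref{tma equiv} to conclude uniform continuity at the level of end spaces. In other words, the paper uses the categorical machinery it has built precisely so as to avoid the bookkeeping you are proposing to do by hand; your route is more elementary but forfeits the payoff of the earlier sections.

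There is also a genuine gap in your argument that the paper handles explicitly. You write that one composes ``$\alpha$ (viewed with values in appropriate neighborhoods of $X$) with the $F_k$'' and later that ``their $F$-images land in $B(Y,\delta_Y)$''. But the approximative maps $F_k$ are maps $X\to Y_k$, defined only on $X$, while the vertices of $T_{Z,X}$ are classes in $[Z,X_m]$ with target a \emph{neighborhood} $X_m\supsetneq X$. You cannot compose $F_k$ with $\alpha_m:Z\to X_m$ until you have extended $F_k$ off $X$ to some $X_m$, and this extension is exactly what the paper obtains by passing from the approximative map $\underline f$ to a fundamental sequence $(\Phi_k)$ via Lemma~1, p.~333 of \cite{MS1}: one gets maps $\Phi_k$ defined on the ambient space with $\Phi_k|_X=f_k$ and $\Phi_{k'}|_{U_k}\simeq\Phi_k|_{U_k}$ in $Y_k$ for a suitable neighborhood $U_k\supset X$, and only then can one define $\Phi_{k*}:[Z,X_{m_k}]\to[Z,Y_k]$ and check the commutativity needed for a morphism of towers. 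Your ``cofinality juggling'' paragraph identifies a reindexing issue but not this extension issue; without it, neither the map $F_*$ on branches nor the key homotopy ``$F$-images are homotopic in $Y_{n_0}$'' is well defined. Once you insert the fundamental-sequence extension, your direct estimate can be completed, but at that point you have essentially reconstructed the morphism of inverse sequences that the paper builds, and it is cleaner to finish via Theorems~\ref{equiv categ} and~\ref{tma equiv}.
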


\begin{proof} Let $\underline{X}=X_1 \leftarrow X_2 \leftarrow \cdots$,
$\underline{Y}=Y_1 \leftarrow Y_2 \leftarrow \cdots$ and
$\underline{Z}=Z_1 \leftarrow Z_2 \leftarrow \cdots$ inverse
sequences of neighborhoods connected by inclusions such that
$X=\underset{\leftarrow}{lim} X_i$, $Y=\underset{\leftarrow}{lim}
Y_i$ and $Z=\underset{\leftarrow}{lim} Z_i$. Let $F\in Sh(X,Y)$.
Then $F$ will be represented by an approximative map
$\underline{f}: X \to \underline{Y}$. Let us see that $F_*$
induces a morphism of inverse sequences between $([Z,X_k],i^*_k)$
and $([Z,Y_k],i^*_k)$. Given $\underline{f}: X \to \underline{Y}$,
see Lemma 1, page 333 in \cite{MS1}, there exists a fundamental
sequence $(\Phi_n): X \to Y$ such that for every $k\in
\mathbb{N}$, $\Phi_k|_X=f_k$ and $\Phi_{k'}|_{U_k} \simeq
\Phi_k|_{U_k} $ in $Y_k$, $k'\geq k$ for some neighborhood $U_k$
of $X$. In particular, $\Phi_k(U_k)\subset Y_k$ and there exists
some level $m_k$ for which $X_{m_k}\subset U_k$. Then, the map
$\Phi_{k*}: [Z,X_{m_k}] \to [Z,Y_k]$ given by $\Phi_{k *}(f_k)=
\Phi_k \circ f_k$ is well defined. We can assume that $(m_k)$ is
increasing and to check that this induces a morphism between
inverse sequences it suffices to see that the following diagram
commutes:

\begin{displaymath}
\xymatrix{[Z,X_{m_k}] \ar[d]_{\Phi_{k*}}  & [Z,X_{m_{k+1}}] \ar[l]_{i_*}  \ar[d]^{\Phi_{k+1*}}\\
            [Z,Y_k] & [Z,Y_{k+1}] \ar[l]^{i_*}}
\end{displaymath}

Let $[f_{m_{k+1}}]\in [Z,X_{m_{k+1}}]$ and consider $i_* \circ
\Phi_{k+1} \circ f_{m_{k+1}}:Z \to Y_k$. From the definition of
$\Phi_k$ we know that $\Phi_k|_{X_{m_k}}\simeq \Phi_{k+1}
|_{X_{m_{k}}}$ in $Y_k$, therefore $i_* \circ \Phi_{k+1} \circ
f_{m_{k+1}}\simeq \Phi_{k} \circ i_* \circ f_{m_{k+1}}:Z \to Y_k$
and the diagram commutes.

A morphism between inverse sequences induces, see \ref{equiv
categ}, a rooted continuous metrically proper map between the
trees which may be restricted to a map with the same properties
between the maximal geodesically complete subtrees. This map, can
be translated with \ref{tma equiv} to a uniformly continuous map
between the end spaces, and those are the spaces of shape
morphisms with their ultrametrics (with those depending, up to
uniform homeomorphism, on the inverse sequences initially chosen).
\end{proof}

\paragraph{Inverse limits and Marde\v{s}i\'c-Segal's approach to shape
morphisms} Let X,Y two compacta. Marde\v{s}i\'c and Segal proved
in \cite{MS1} $\S 5.2$, see also \cite{MS2}, that there are
inverse sequences in the homotopy category $\mathcal{P}$ of
topological spaces having the homotopy type of polyhedra
$\textbf{X}:=X_1 \overset{p_1}{\leftarrow} X_2
\overset{p_2}{\leftarrow} \cdots $ and $\textbf{Y}:=Y_1
\overset{q_1}{\leftarrow} Y_2 \overset{q_2}{\leftarrow} \cdots $
such that $X=\underset{\leftarrow}{lim}X_i$,
$Y=\underset{\leftarrow}{lim}Y_i$ and $\textbf{p}:X \to
\textbf{X}$, $\textbf{q}:Y \to \textbf{Y}$
$\mathcal{P}$-expansions. They also defined the shape morphisms
between $X$ and $Y$ as homotopy classes of morphisms in
pro-$\mathcal{P}$ between \textbf{X} and \textbf{Y} and proved
that those morphism can be given by homotopy classes of morphism
in \textbf{pro-Top}, with \textbf{Top} the category of topological
spaces, between $X$ and \textbf{Y}. They also proved that if we
restrict ourselves to the Hilbert cube, there is an isomorphism of
categories between this category and Borsuk's Shape category.

Homotopy classes of morphism in \textbf{pro-Top} between $X$ and
\textbf{Y} can be given as inverse limits of the inverse sequence
$([X,Y_k],q_k*)$. Thus, if we consider $T_{X,Y}$ the tree of this
inverse sequence, we have the following proposition. (Obviously,
it may be given as a corollary of \ref{biyeccion} but it seems
interesting to include here a direct proof of this).

\begin{prop}\label{biy} There is a bijection between the shape morphisms
of X to Y and the set of geodesically complete branches in
$T_{X,Y}$.
\end{prop}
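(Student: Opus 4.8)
The plan is to unwind both sides of the asserted bijection until they are literally the same inverse limit; the argument is direct and does not go through \ref{biyeccion}. On the shape side, I will use what has just been recalled from \cite{MS1} $\S 5.2$: a shape morphism from $X$ to $Y$ is a homotopy class of morphisms in \textbf{pro-Top} from $X$, regarded as a one-term inverse system, to the sequence $\mathbf{Y}=(Y_k,q_k)$, and because the domain is a single space such a class is nothing but a thread, i.e. a sequence $([f_k])_{k\in\mathbb{N}}$ with $[f_k]\in[X,Y_k]$ and $q_k^*[f_{k+1}]=[f_k]$ for every $k$. In other words, $Sh(X,Y)$ is in natural bijection with $\underset{\leftarrow}{\lim}\,([X,Y_k],q_k^*)$.

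On the tree side, I will simply read off the construction of the tree of an inverse sequence applied to $\underline{X}=([X,Y_k],q_k^*)$: the vertices of $T_{X,Y}$ at distance $n$ from the root are the elements of $[X,Y_n]$, the root $v$ is joined to every vertex of $[X,Y_1]$, and each $[f_{n+1}]\in[X,Y_{n+1}]$ is joined by an edge to $q_n^*[f_{n+1}]\in[X,Y_n]$. A rooted branch of $T_{X,Y}$ is determined by the sequence of vertices through which it passes, and, the edges having length $1$ (cf. \ref{extensiongeod}), it is geodesically complete exactly when that sequence is infinite, say $([f_n])_{n\in\mathbb{N}}$ with $q_n^*[f_{n+1}]=[f_n]$ for all $n$; a maximal finite branch stops at a vertex which is not in the image of the corresponding $q_n^*$ and so cannot be extended. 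Hence the geodesically complete branches of $T_{X,Y}$ are in bijection with $\underset{\leftarrow}{\lim}\,([X,Y_k],q_k^*)$ as well.

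Composing these two bijections yields the one in the statement. The single point that requires care --- and the only place I anticipate even a mild obstacle --- is the first step: one must be certain that, precisely because the source $X$ is a single space rather than a genuine inverse sequence, a (homotopy class of a) morphism in \textbf{pro-Top} into $\mathbf{Y}$ carries no index-shifting or cofinality data and reduces to a strictly commuting thread of homotopy classes. This is exactly what the $\mathcal{P}$-expansion $\mathbf{q}:Y\to\mathbf{Y}$ of \cite{MS1} $\S 5.2$ provides; granting it, the remainder is a direct application of the tree-of-a-sequence dictionary already established in this paper.
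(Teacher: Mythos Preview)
Your argument is correct, and in fact it is precisely the short route that the paper itself alludes to just before the statement (``Homotopy classes of morphism in \textbf{pro-Top} between $X$ and $\textbf{Y}$ can be given as inverse limits of the inverse sequence $([X,Y_k],q_k^*)$''). You simply take that sentence at face value, observe that for a one-object source system a morphism in pro-\textbf{HTop} into $\mathbf{Y}$ carries no reindexing data and hence is exactly a thread, and then match threads with infinite branches by the tree-of-a-sequence construction. Nothing is missing; the only delicate point is the one you flag yourself, and it is indeed handled by the fact that the domain pro-object is rudimentary.

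The paper, by contrast, does not invoke that identification but rather re-establishes it by hand through the $\mathcal{P}$-expansion $\mathbf{p}:X\to\mathbf{X}$: it sends a branch $F$ (i.e.\ a morphism $\mathbf{f}:X\to\mathbf{Y}$ in pro-\textbf{HTop}) to the unique $\mathbf{h}:\mathbf{X}\to\mathbf{Y}$ in pro-$\mathcal{P}$ with $\mathbf{h}\circ\mathbf{p}=\mathbf{f}$, and then checks injectivity and surjectivity of $F\mapsto[\mathbf{h}]$ by explicit diagram chases. What this buys is that the reader sees concretely how the expansion universal property produces the shape morphism and why two distinct branches cannot yield equivalent $\mathbf{h},\mathbf{h}'$; your approach trades that visibility for brevity, collapsing both sides to $\varprojlim([X,Y_k],q_k^*)$ in one line. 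Either is a complete proof.
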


\begin{proof} First we define a function $\xi$ from the geodesically
complete branches of the tree to the shape morphisms. A
geodesically complete branch of the tree obviously represents a
morphism $\textbf{f}:X \to \textbf{Y}$, in \textbf{pro-HTop}
(where \textbf{HTop} is the homotopy category of topological
spaces), which is a commutative diagram as follows.

\begin{displaymath}
\xymatrix{Y_1  & Y_2 \ar[l] & Y_3 \ar[l] & \cdots \ar[l]\\
            X \ar[u]|{f_1} \ar[ur]|{f_2} \ar[urr]|{f_3}& & &}
\end{displaymath}

Since $Y_k$ is in $\mathcal{P}$, let $p:X \to \textbf{X}$ be any
$\mathcal{P}$-expansion of $X$, see \cite{MS1}. Thus, for any
morphism $\textbf{f}:X \to \textbf{Y}$ in pro-$\mathcal{T}$ there
exists a unique morphism $\textbf{h}: \textbf{X}\to \textbf{Y}$ in
pro-$\mathcal{P}$ making commutative the diagram.

\begin{displaymath}
\xymatrix{\textbf{X} \ar[dr]_{\textbf{h}} & X \ar[l]_p \ar[d]^{\textbf{f}} \\
              & \textbf{Y}   }
\end{displaymath}

This means that for any morphism $\textbf{f}:X \to \textbf{Y}$ in
\textbf{pro-HTop}, this is any geodesically complete branch $F$ of
the tree, there is a unique homotopy class $\textbf{[h]}$ of
morphisms in pro-$\mathcal{P}$ making the diagram commutative,
this is, a unique shape morphism $H:X\to Y$. So we define
$\xi(F)=H$.

$\xi$ is injective. Let $F,F'$ be infinite branches and
$\textbf{f},\textbf{f'}:X \to \textbf{Y}$ the corresponding
morphisms in pro-$\mathcal{T}$ and suppose that
$\xi(\textbf{f})=H=\textbf{[h]}$ and
$\xi(\textbf{f'})=H'=\textbf{[h']}$ are such that $\textbf{h}\sim
\textbf{h'}$. This means that $\forall n\in \mathbb{N}$ there
exists some $m \in \mathbb{N}$, $m\geq \Phi(n),\Phi'(n)$, such
that the diagram commutes:

\begin{displaymath}
\xymatrix{X_{\Phi_m} \ar[dr]_{h_n} & X_m \ar[l] \ar[r] & X_{\Phi'_m} \ar[dl]^{h'_n}\\
              & Y_n  & }
\end{displaymath}

Clearly $h_n \circ p_{\Phi(n)m}\simeq h'_n \circ p_{\Phi'(n)m}$
implies that if we compose with $p_m:X\to X_m$ of the
$\mathcal{P}$-expansion $\textbf{p}$ we have that,
\begin{equation}h_n \circ p_{\Phi(n)m} \circ p_{m}\simeq h'_n \circ
p_{\Phi'(n)m}\circ p_m.
\end{equation}
Since $\textbf{p}$ is a morphism in pro-$\mathcal{T}$
$p_{\Phi(n)m}\circ p_m\simeq p_{\Phi_n}$ and $p_{\Phi'(n)m}\circ
p_m\simeq p_{\Phi'_n}$ and by definition, $\textbf{h}\circ
\textbf{p}\simeq \textbf{f}$, this is, $\forall n\in \mathbb{N}$,
$h_n\circ p_{\Phi(n)}\simeq f_n$ and $h'_n\circ p_{\Phi(n)}\simeq
f'_n$. Then we have that $\forall n \in \mathbb{N}$
\begin{equation}f_n\simeq h_n\circ p_{\Phi(n)m} \circ
p_{m}\simeq h'_n \circ p_{\Phi'(n)m}\circ p_m \simeq f'_n.
\end{equation}

Hence $\textbf{f}\sim \textbf{f'}$ and $F=F'$.

$\xi$ is surjective. Consider any shape morphism between X and Y
given by a morphism in pro-$\mathcal{P}$ between the inverse
sequences, $h:\textbf{X}\to \textbf{Y}$. Then if we consider
$\textbf{f}:X \to \textbf{Y}$ defined by $f_k:=p_{\Phi(k)}\circ
h_{\Phi(k)}:X\to Y_k$ and $F$ the corresponding branch then
obviously $\textbf{f}\sim \textbf{h} \circ \textbf{p}$, and the
uniqueness of $\textbf{[h]}$ in the $\mathcal{P}$-expansion
implies that $H=\textbf{[h]}=\xi(F)$. \end{proof}

\paragraph{Pointed shape.} Let $(X,*),(Y,*)$ two pointed metric compacta, then if $\mathcal{P}_*$
is the category of spaces with the (pointed) homotopy type of
pointed polyhedra, there are also defined in \cite{MS1} pointed
shape morphisms as (pointed) homotopy classes of morphisms in
pro-$\mathcal{P}_*$.

We can now define in a similar way a tree $T_{X*,Y*}$ whose
vertices are pointed homotopy classes of maps from $(X,*)$ to
$(Y_n,*)$ (denoted $[(X,*),(Y_n,*)]$) $\forall n\in \mathbb{N}$
and joining them in a similar way. There is an edge joining
$[\alpha]\in [(X,*),(Y_{k+1},*)]$ and $[\beta]\in [(X,*),(Y_k,*)]$
if and only if $[p_k\circ \alpha]\simeq_* [\beta]$ in $(Y_k,*)$. A
proof similar to the one given in the non-pointed case
establishes:

\begin{prop} There is a bijection between the pointed shape
morphisms of $(X,*)$ to $(Y,*)$ and the set of geodesically
complete branches in $T_{X*,Y*}$.
\end{prop}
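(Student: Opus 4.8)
The plan is to imitate, \emph{mutatis mutandis}, the proof of Proposition~\ref{biy} (the non-pointed case), replacing everywhere homotopy classes of maps by pointed homotopy classes $[(X,*),(Y_n,*)]$, the category $\mathcal{P}$ by $\mathcal{P}_*$, $\mathcal{P}$-expansions by pointed $\mathcal{P}_*$-expansions, and the relation $\simeq$ by $\simeq_*$. All the ingredients needed for this — the existence of $\mathcal{P}_*$-expansions, the factorization of morphisms through them, and the description of pointed shape morphisms as homotopy classes of morphisms in pro-$\mathcal{P}_*$ — are available from \cite{MS1}~\S5.2.

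First I would fix a pointed $\mathcal{P}_*$-expansion $\mathbf{p}:(X,*)\to\mathbf{X}$ and record that $T_{X*,Y*}$ is, by construction, the tree associated to the inverse sequence $([(X,*),(Y_n,*)],p_k^*)$ in \textbf{Tower-Set}; hence its geodesically complete branches are exactly the elements of the inverse limit, i.e.\ the morphisms $\mathbf{f}:(X,*)\to\mathbf{Y}$ in pointed \textbf{pro-HTop}. For such a branch $F\leftrightarrow\mathbf{f}$, the universal property of $\mathbf{p}$ yields a unique pointed homotopy class $[\mathbf{h}]$ of morphisms $\mathbf{h}:\mathbf{X}\to\mathbf{Y}$ in pro-$\mathcal{P}_*$ with $\mathbf{h}\circ\mathbf{p}\simeq_*\mathbf{f}$; this is, by definition, a pointed shape morphism $(X,*)\to(Y,*)$, and I would set $\xi(F):=[\mathbf{h}]$.

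Injectivity is the same computation as in Proposition~\ref{biy}: if $\xi(F)=[\mathbf{h}]=[\mathbf{h}']=\xi(F')$, then for each $n$ there is $m\ge\Phi(n),\Phi'(n)$ with $h_n\circ p_{\Phi(n)m}\simeq_* h'_n\circ p_{\Phi'(n)m}$; composing with $p_m$ of the expansion and using that $\mathbf{p}$ is a morphism in pointed \textbf{pro-HTop} together with $\mathbf{h}\circ\mathbf{p}\simeq_*\mathbf{f}$ and $\mathbf{h}'\circ\mathbf{p}\simeq_*\mathbf{f}'$, one obtains $f_n\simeq_* f'_n$ for all $n$, i.e.\ $\mathbf{f}\sim\mathbf{f}'$ and hence $F=F'$. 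For surjectivity, given a pointed shape morphism represented by $\mathbf{h}:\mathbf{X}\to\mathbf{Y}$ in pro-$\mathcal{P}_*$, I would put $f_k:=p_{\Phi(k)}\circ h_{\Phi(k)}:(X,*)\to(Y_k,*)$, let $F$ be the corresponding geodesically complete branch of $T_{X*,Y*}$, and note $\mathbf{f}\simeq_*\mathbf{h}\circ\mathbf{p}$, so uniqueness of the class in the $\mathcal{P}_*$-expansion gives $\xi(F)=[\mathbf{h}]$.

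The only points that genuinely require care — and hence the main (minor) obstacle — are basepoint bookkeeping: checking that every homotopy appearing in the non-pointed argument can legitimately be taken basepoint-preserving, and that the factorization property of the expansion holds in the pointed category. Since both facts are supplied by \cite{MS1}, no new geometric input is needed, and the proof reduces to replacing each $\simeq$ by $\simeq_*$ and each mapping set by its pointed analogue.
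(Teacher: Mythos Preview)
Your proposal is correct and matches the paper's approach exactly: the paper gives no separate argument for this proposition, stating only that ``a proof similar to the one given in the non-pointed case establishes'' it, and you have faithfully carried out that analogy with the appropriate pointed substitutions. The only cosmetic slip---inherited from the paper's own Proposition~\ref{biy}---is the composition order in the surjectivity step, where one should write $f_k := h_k\circ p_{\Phi(k)}$.
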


If we consider the first shape group, the (pointed) morphisms from
$(S^1,*)$ to $(Y,*)$ may be considered geodesically complete
branches of the tree defined over the inverse system
$\textbf{Y}_{*}:=(Y_1,*) \overset{q^*_1}{\leftarrow} (Y_2,*)
\overset{q^*_2}{\leftarrow} \cdots $.

Now, as an example of this geometric point of view, let us analyze
the solenoid. It is well known that the first shape group of the
solenoid is trivial. Let us recall here the construction.

\begin{ejp} Consider a solenoid $(Y,z_0)$ which is the inverse limit of the
following inverse system in pro-$\mathcal{P_*}$.
$(Y_n,z_0)=(S^1,z_0) \ \forall n\in \mathbb{N}$ (with $S^1:=\{z\in
\mathbb{C} \mbox{ with } ||z||=1\}$ and $z_0=1$) and the bonding
(pointed) maps $p_n: (Y_{n+1},z_0) \to (Y_n,z_0)$ are defined by
$p(z)=z^2 \quad \forall n\in \mathbb{N}$.
\end{ejp}

Each level of vertices of the tree, $[(S^1,z_0),(Y_n,z_0)]$, has
structure of group. It is in fact the first homotopy group of
$(Y_n,z_0)$ which is isomorphic to $(\mathbb{Z},+)$ (let
$h_n:[(S^1,z_0),(Y_n,z_0)] \to (\mathbb{Z},+)$ be this
isomorphism), and the bonding maps $p_n$ clearly induce
endomorphisms $f_n$ in $(\mathbb{Z},+)$ such that $f_n(1)=2$ and
hence $f_n(z)=2\cdot z$.

This implies immediately that the first shape group of the
solenoid is trivial. If we consider the tree, $T_S$, associated to
this inverse sequence, the trivial pointed shape morphism is
represented by the geodesically complete branch whose vertex in
each $[(S^1,z_0),(Y_n,z_0)]$ is the trivial map $f(z)=z_0$
($h_n(f)=0$ in $(\mathbb{Z},+)$).

Any geodesically complete branch of the tree representing a
non-trivial pointed shape morphism from $(S^1,z_0)$ to $(Y,z_0)$
would be determined by a sequence of vertices $\alpha_n \in
[(S^1,*),(Y_n,*)]$ which can be identified with a sequence of
integers $(z_1,z_2,z_3,\ldots)$ with $0\neq z_n=h_n(\alpha_n)$.
The bonding maps impose the condition that
$z_n=f_n(z_{n+1})=2\cdot z_{n+1}$ but this leads to a
contradiction. There must be some $k\in \mathbb{N}$ such that
$2^k$ doesn't divide $z_1$ and this contradicts the fact that
$z_1=f_1 \circ f_2\circ \ldots \circ f_k (z_{k+1})=2^k\cdot
z_{k+1}$. Thus, the maximal geodesically complete subtree consists
of a unique infinite branch.

Nevertheless, there are arbitrarily long branches in the tree
$T_S$, which means that the tree is not metrically proper homotopy
equivalent to the maximal geodesically complete subtree. This
corresponds, as we saw in \ref{ML-prop}, to the sequence not being
(ML), which is one of the basic properties of this sequence since
the solenoid is not movable.

\begin{ejp} The same works for any solenoid defined with
bonding (pointed) maps $p_n: (Y_{n+1},z_0) \to (Y_n,z_0)$ defined
by $p(z)=z^{p_n}$ with $p_n$ prime $\forall n\in \mathbb{N}$.
\end{ejp}

In this case the induced endomorphisms are such that $f_k(1)=p_k$
and so $f_k(z)=p_k \cdot z$. Any geodesically complete branch $F$
is represented by a sequence of integers $(z_1,z_2,z_3,\ldots)$
with $0\neq z_n=h_n(\alpha_n)$ and the bonding maps impose the
condition that $z_n=f_n(z_{n+1})=p_n\cdot z_{n+1}$. Let
$z_1=p_1\cdot z_2=p_2\cdot p_1 \cdot z_3= \ldots$ and since $z_1$
is a finite product of primes there must be some $k\in \mathbb{N}$
such that $z_k=1$ and this contradicts the fact that $z_k=p_k\cdot
z_{k+1}$.

\end{document}